\newtheorem{thm}{Theorem}[section]
\newtheorem{cor}[thm]{Corollary}
\newtheorem{lem}[thm]{Lemma}
\newtheorem{mydef}[thm]{Definition}
\newtheorem{rem}[thm]{Remark}
\newtheorem{ex}[thm]{Example}
\newtheorem{prop}[thm]{Proposition}
\newcommand{\ric}[1]{\text{Ric}(#1)}
\newcommand{\xRightarrow}[2][]{\ext@arrow 0359\Rightarrowfill@{#1}{#2}}
\DeclareMathOperator{\Aut}{Aut}
\DeclareMathOperator{\DF}{DF}
\DeclareMathOperator{\proj}{pr}
\DeclareMathOperator{\NA}{NA}
\newcommand{\R}{\mathbb{R}}
\newcommand{\C}{\mathbb{C}}
\renewcommand{\epsilon}{\varepsilon}
\newcommand{\ddb}{i\partial \bar\partial}
\newcommand{\mft}{\mathfrak{t}}
\newcommand{\A}{\mathcal{A}}
\newcommand{\X}{\mathcal{X}}
\begin{document}
\title[K-polystability of cscK manifolds]{On K-polystability of cscK manifolds with transcendental cohomology class}
\author{Zakarias Sj\"ostr\"om Dyrefelt \\ \\ With an appendix by Ruadha\'i Dervan}

\address{Zakarias Sj\"ostr\"om Dyrefelt, Department of Mathematical Sciences, Chalmers University of Technology, Chalmers Tv\"argata 3, SE-412 96, G\"oteborg, Sweden. }
\email{sjoza@chalmers.se}

\address{Ruadha\'i Dervan, Centre de math\'ematiques Laurent-Schwartz, \'Ecole Polytechnique, Cour Vaneau, 91120 Palaiseau, France}
\email{R.Dervan@dpmms.cam.ac.uk}


\begin{abstract}
In this paper we study K-polystability of arbitrary (possibly non-projective) compact K\"ahler manifolds 
admitting holomorphic vector fields. As a main result we show that existence of a constant scalar curvature K\"ahler (cscK) metric implies 'geodesic K-polystability', in a sense that is expected to be equivalent to K-polystability in general. In particular, 
in the spirit of an expectation of Chen-Tang \cite{ChenTang} 
we show that geodesic K-polystability implies algebraic K-polystability for polarized manifolds, so our main result recovers a possibly stronger version of results of Berman-Darvas-Lu \cite{BDL} in this case. 
As a key part of the proof we also study subgeodesic rays with singularity type prescribed by singular test configurations, and prove a result on asymptotics of the K-energy functional along such rays. 

In an appendix by R. Dervan it is finally deduced that geodesic K-polystability implies equivariant K-polystability. 
This improves upon the results of \cite{Dervanrelative} and proves that 
existence of a cscK (or extremal) K\"ahler metric implies equivariant K-polystability (resp. relative K-stability).
\end{abstract}

\maketitle

\section{Introduction}

\noindent This paper is concerned with the study of K-polystability of arbitrary (possibly non-projective) compact K\"ahler manifolds admitting holomorphic vector fields. This builds on the transcendental formalism for K-stability introduced in \cite{SD1}, which simultaneously simplifies and generalizes the classical theory for polarized manifolds (due to Donaldson \cite{Tian, Donaldsontoric} and others) to the setting of arbitrary K\"ahler manifolds. A strong motivation for studying K-stability is the Yau-Tian-Donaldson conjecture, which connects a classical question of Calabi \cite{Calabi50, Calabiextremal} on existence of "canonical metrics" to algebro-geometric stability notions.
Given a polarized manifold $(X,L)$ (i.e. a pair of a compact K\"ahler manifold $X$ and an ample line bundle $L$ on $X$) it predicts that the first Chern class $c_1(L)$ admits a constant scalar curvature K\"ahler (cscK) representative if and only if $(X,L)$ is K-stable.
For Fano manifolds this conjecture was recently confirmed in \cite{CDSone,CDStwo, CDSthree} and \cite{TianYTDconjecture}, see also \cite{LiTianWangYTDconjecturesingular} for an extension of this result to singular Fano varieties.
In the case of general polarized manifolds
it is known that K-polystability is a necessary condition for existence of cscK metrics in the first Chern class $c_1(L)$, see \cite{Berman, BDL}. The converse is a central open problem in K\"ahler geometry.  

Now let $(X,\omega)$ be a compact K\"ahler manifold and denote by $\alpha := [\omega] \in H^{1,1}(X,\mathbb{R})$ its associated K\"ahler class. From a differential geometric perspective it is natural to ask if it is possible to introduce a generalized K-stability notion that characterizes existence of cscK metrics in the K\"ahler class $\alpha$, even if it is not necessarily of the form $c_1(L)$ for some ample line bundle $L$ over $X$. Such a theory of K-stability for "transcendental" cohomology classes (i.e $\alpha \in H^{1,1}(X,\mathbb{R})$ that are not necessarily in $H^2(X,\mathbb{Q})$) was introduced in \cite{SD1} and  \cite{DervanRoss, Dervanrelative}. 
As a first result, it was proven in \cite{SD1, DervanRoss} that cscK manifolds are always K-semistable, thus generalizing results of \cite{Tian, Donaldsoncalabi}. We also proved (uniform) K-stability whenever the automorphism group is discrete. 

The case of compact K\"ahler manifolds admitting holomorphic vector fields is however much more involved. 
In this direction, it was proven in \cite{Dervanrelative} that if the norm $||(\mathcal{X},\mathcal{A})||$ of a test configuration\footnote{In the generalized sense of \cite{SD1}.} is strictly positive, then $\mathrm{DF}(\mathcal{X},\mathcal{A}) > 0$. In order to establish the natural analog of K-polystability in the general K\"ahler setting it therefore remains to characterize the vanishing of the norm and the Donaldson-Futaki invariant respectively. This is expected to happen precisely when the test configuration is a "product" (in the appropriate sense). 
In this paper we first give a definition of K-polystability for K\"ahler manifolds (Definition \ref{Def Kps}) and compare it to the usual notion for polarized manifolds. It turns out that our notion then \emph{implies} the classical one due to Donaldson \cite{Donaldsontoric}. As a main result we then establish a number of conditions equivalent to the vanishing of the Donaldson-Futaki invariant. These are of a more analytic nature, involving geodesic rays associated to test configurations (in the spirit of \cite{ArezzoTian, ChenTang}). This leads to a "geodesic K-polystability" notion defined by asking that $\mathrm{DF}(\mathcal{X},\mathcal{A})$ vanishes precisely if $(\mathcal{X},\mathcal{A})$ is "geodesically product", meaning that the geodesic ray associated to $(\mathcal{X},\mathcal{A})$ is induced by the flow of a real holomorphic Hamiltonian vector field (see Definition \ref{Definition geodesic Kps}).
We then prove the following: 

\begin{thm} \label{Main geodesic Kps thm} Suppose that $(X,\omega)$ is a compact K\"ahler manifold. If the K\"ahler class $\alpha := [\omega] \in H^{1,1}(X,\mathbb{R})$ admits a cscK representative, then $(X,\alpha)$ is geodesically K-polystable.
\end{thm}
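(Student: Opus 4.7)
The plan is to follow the variational strategy pioneered by Berman, Berman-Berndtsson and Berman-Darvas-Lu in the polarized setting, but adapted to the transcendental K\"ahler framework developed in \cite{SD1, DervanRoss, Dervanrelative}. Let $\omega$ be a cscK representative of $\alpha$ and let $(\X,\A)$ be a test configuration. By the K-semistability theorem of \cite{SD1, DervanRoss} we already know $\DF(\X,\A) \geq 0$, so the problem reduces to the rigidity statement: if $\DF(\X,\A) = 0$, then $(\X,\A)$ is geodesically product in the sense of Definition \ref{Definition geodesic Kps}.

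The first step is to associate to $(\X,\A)$ a weak subgeodesic ray $(\phi_t)_{t\geq 0}$ of $\omega$-psh potentials emanating from $\omega$, with singularity type at infinity prescribed by $(\X,\A)$. For a smooth test configuration this mimics the Phong-Sturm ray, while the general (possibly singular) case is precisely the technical heart of the paper announced in the abstract. The second step is the slope formula: the Mabuchi K-energy $\M$ is convex in $t$ along this ray, and
\[ \lim_{t\to\infty} \frac{\M(\phi_t)}{t} = \DF(\X,\A). \]
Convexity extends the Berman-Berndtsson/Chen-Li-Paun theory to the singular rays under consideration, while the slope identity transcendentalizes Tian's classical formula relating $\DF$ to the leading asymptotic of $\M$.

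Granted these two ingredients, the rigidity argument is short. Since $\omega$ is cscK it realises the infimum of $\M$, hence $\M(\phi_t) \geq \M(\phi_0)$ for all $t \geq 0$. Convexity of $t \mapsto \M(\phi_t)$ together with the slope vanishing then forces $\M$ to be constant along the ray. One finally invokes a rigidity theorem in the spirit of Berman-Berndtsson adapted to this setting: constancy of $\M$ along a weak geodesic ray of finite energy forces the ray to be induced by the flow of a real holomorphic Hamiltonian vector field on $X$. This is exactly the geometric content of $(\X,\A)$ being geodesically product, completing the proof.

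The main obstacle is Step 2 in the singular case. For a general test configuration the associated ray has unbounded potentials and non-trivial singularity type at infinity, so the natural definitions of entropy, energy and hence of $\M$ itself along $\phi_t$ require careful regularization and cut-off arguments. Establishing the slope formula then amounts to controlling the asymptotic behaviour of the entropy contribution to $\M$ along such a singular ray, which is precisely what must be done to recover $\DF(\X,\A)$ as the leading coefficient; this is the content flagged in the abstract as the main technical input. A secondary difficulty is ensuring that the Berman-Berndtsson rigidity machinery applies to \emph{weak} geodesic rays associated to arbitrary test configurations in a possibly non-projective K\"ahler class, and that the resulting one-parameter family of automorphisms matches the product structure of $(\X,\A)$ imposed by Definition \ref{Definition geodesic Kps}.
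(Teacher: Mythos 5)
Your overall strategy (K-semistability from \cite{SD1} plus a rigidity analysis of test configurations with $\mathrm{DF}=0$ via K-energy asymptotics and the cscK potential being a minimizer) is the same general variational framework as the paper, but the two steps you treat as available black boxes are precisely where the argument is incomplete. First, the slope formula is misstated: along the rays one can actually control for a possibly singular test configuration (the smooth $C^{\infty}$-compatible rays of Theorem \ref{Theorem generalized thm C intro}), the slope of $\mathrm{M}$ is the non-Archimedean Mabuchi functional $\mathrm{M}^{\mathrm{NA}}(\mathcal{X},\mathcal{A})=\mathrm{DF}(\mathcal{X},\mathcal{A})+((\mathcal{X}_{0,\mathrm{red}}-\mathcal{X}_0)\cdot\mathcal{A}^n)$, not $\mathrm{DF}$; the discrepancy is exactly what forces one to first prove reducedness of the central fiber (Lemma \ref{Lemma JNA bounded}). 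Moreover, for a test configuration with \emph{singular} total space the slope formula along the associated \emph{geodesic} ray is not established at all (see the remarks after Theorem \ref{Theorem generalized thm C intro}), whereas convexity of $\mathrm{M}$ is only available along the geodesic, not along the compatible smooth rays (which need not even be subgeodesics). So your plan to run convexity and the slope formula along one and the same ray, and conclude that $\mathrm{M}$ is constant along the geodesic, does not close: you would need the entropy asymptotics along the weak geodesic of a singular test configuration, which is exactly the obstruction you flag but do not resolve, and which the paper deliberately circumvents.

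Second, the concluding ``rigidity theorem in the spirit of Berman--Berndtsson'' — that constancy of $\mathrm{M}$ along a weak geodesic ray forces the ray to be the flow of a real holomorphic Hamiltonian vector field — is not an available result in this transcendental, low-regularity setting, and the paper does not use such a statement. Instead it argues as follows: from $\mathrm{DF}=0$ and the $\mathrm{M}^{\mathrm{NA}}$-slope formula one gets $\mathrm{M}(\psi_t)\le o(t)$ along a $C^{\infty}$-compatible ray; the $G$-coercivity of $\mathrm{M}$ for cscK classes (\cite{BDL}) converts this into $\mathrm{J}_G(G\psi_t)\le o(t)$, hence $d_{1,G}(G\varphi_0,G\varphi_t)\le o(t)$ for the genuine geodesic ray after transferring via $L^{\infty}$-compatibility (Lemma \ref{Lemma change of rays}); finally the compactness argument of \cite[Proposition 3.1]{BDL} (reductivity of $G$, finite-dimensionality of $\mathrm{isom}(X,\omega_{\varphi_0})$, linearity of $d_1$ along geodesics) produces $V$ with $\varphi_t=\exp(tJV)\cdot\varphi_0$, which is exactly Definition \ref{Definition geodesic Kps}. (There is also a normalization $\mathrm{E}(\varphi_t)=0$ handled by the projection $\mathcal{P}$ of Proposition \ref{Prop basic properties of projection}, which your sketch ignores.) As written, your proof therefore has a genuine gap at both the slope identity for singular test configurations and the final rigidity step; to repair it you would either have to prove those two statements, or replace them by the coercivity-plus-$d_{1,G}$-compactness route the paper takes.
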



\noindent This stability notion is natural in light of the classical circle of ideas relating test configurations to geodesic rays (see e.g. \cite{Chen00, ArezzoTian, ChenTang, WR, PRS, BHJ2}). In particular, in light of influential examples of \cite{ACGTF} it was suggested in e.g. \cite{ChenTang} that some kind of "geodesic stability" should be an interesting candidate to study alongside the classical algebraic stability. The expectation of \cite{ChenTang} was that such a notion should be stronger than algebraic stability. The following Proposition \ref{Prop equivalence} confirms the analogous result with respect to the geodesic K-polystability introduced in this paper. 
In case the automorphism group of $X$ is discrete, we moreover prove that geodesic K-polystability is equivalent to K-polystability.

\begin{prop} \label{Prop equivalence}
Let $(X,\omega)$ be a compact K\"ahler manifold with $\alpha := [\omega] \in H^{1,1}(X,\mathbb{R})$ the associated K\"ahler class. Then the following holds: 
\begin{itemize}
\item If $\alpha = c_1(L)$ for some ample line bundle $L$ over $X$ and $(X,c_1(L))$ is geodesically K-polystable, then $(X,L)$ is algebraically K-polystable in the classical sense of Donaldson \cite{Donaldsontoric}.
\item If $\mathrm{Aut}(X)$ is discrete, then $(X,\alpha)$ is geodesically K-polystable if and only if $(X,\alpha)$ is K-polystable in the sense of Definition \ref{Def Kps}. 
\end{itemize}
\end{prop}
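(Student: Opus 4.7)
The proof splits into two independent parts. \textbf{First bullet (polarized case).} Let $(\mathcal{X},\mathcal{L})$ be an algebraic test configuration for $(X,L)$; viewed as a transcendental test configuration $(\mathcal{X},c_1(\mathcal{L}))$, geodesic K-polystability gives $\mathrm{DF}(\mathcal{X},c_1(\mathcal{L}))\ge 0$, which matches Donaldson's classical invariant $\mathrm{DF}(\mathcal{X},\mathcal{L})\ge 0$. Suppose equality holds. Then by geodesic K-polystability the weak geodesic ray $\phi_t$ attached to $(\mathcal{X},\mathcal{L})$ is generated by the flow of a real holomorphic Hamiltonian vector field $V$ on $X$. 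Since $V$ is holomorphic, its flow complexifies to a holomorphic $\mathbb{C}^*$-action on $X$, which produces a genuine algebraic product test configuration $(X\times\mathbb{C},p_1^*L)$ whose canonically associated geodesic ray coincides with $\phi_t$.

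\textbf{The rigidity step.} The remaining work in the polarized case---and the main obstacle---is to conclude that this agreement of geodesic rays forces $(\mathcal{X},\mathcal{L})$ itself to be $\mathbb{C}^*$-equivariantly isomorphic to $X\times\mathbb{C}$ with the product polarization. I would appeal to the correspondence between (normal) algebraic test configurations and their non-archimedean / geodesic data developed in \cite{PRS, BHJ2}, which shows that the weak geodesic ray determines the normalized test configuration up to equivariant isomorphism. Care is needed to match the geodesic construction of this paper---built through the transcendental formalism of \cite{SD1}---with the polarized construction of \cite{PRS}; this bookkeeping is what I expect to require the most effort, since it is exactly the gap between Donaldson's original algebraic notion and the analytic notion produced by geodesic K-polystability.

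\textbf{Second bullet (discrete $\mathrm{Aut}(X)$).} When $\mathrm{Aut}(X)$ is discrete, no nonzero real holomorphic Hamiltonian vector field exists on $X$. Hence in Definition~\ref{Definition geodesic Kps} a ``geodesically product'' configuration has trivial associated geodesic ray, and in Definition~\ref{Def Kps} a ``product'' configuration must be trivial (no nontrivial $\mathbb{C}^*$-action exists on $X$ to produce a genuine product). To show the two polystability notions then coincide, I would chain the following equivalences assuming $\mathrm{DF}(\mathcal{X},\mathcal{A})=0$: by \cite{Dervanrelative}, $\mathrm{DF}=0$ forces $\|(\mathcal{X},\mathcal{A})\|=0$; by the norm-geodesic dictionary of \cite{SD1}, $\|(\mathcal{X},\mathcal{A})\|=0$ is equivalent both to triviality of the associated geodesic ray and to triviality of the test configuration itself. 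Consequently both K-polystability notions reduce in this regime to the single condition that $\mathrm{DF}\ge 0$ with equality forcing triviality, and the stated equivalence follows essentially by unwinding definitions.
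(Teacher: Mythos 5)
Your skeleton follows the statement, but both bullets rest on steps that are not justified. In the first bullet, the claim that a real holomorphic Hamiltonian vector field $V$ ``complexifies to a holomorphic $\mathbb{C}^*$-action on $X$'' is false in general: the flow of $V$ is a one-parameter subgroup of the isometry group whose closure is a compact torus, and $V$ may be irrational, so no $\mathbb{C}^*$-action (hence no algebraic product test configuration) need exist. Moreover the ``rigidity step'' you defer to \cite{PRS, BHJ2} is exactly the crux, and those references do not contain the needed statement that the geodesic ray determines the test configuration; the paper closes this step by quoting \cite[Lemma 3.2 and Proposition 3.3]{BDL}, which in the polarized setting pass from ``the geodesic ray is $\exp(tJV).\varphi_0$'' to ``$(\mathcal{X},\mathcal{L})$ is a product configuration'' (and, when $V$ does generate a $\mathbb{C}^*$-action, the paper's own injectivity Lemma \ref{Lemma unique model} can be used instead, as in the remark following Proposition \ref{Prop comparison polarized}). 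So your first bullet is incomplete precisely where the work lies.

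In the second bullet your chain is not available under the stated hypotheses. The implication ``$\mathrm{DF}=0$ forces $\|(\mathcal{X},\mathcal{A})\|=0$'' from \cite{Dervanrelative} requires the existence of a cscK (or extremal) metric, whereas the proposition is a pure comparison of two stability notions with no such assumption; and the asserted ``norm-geodesic dictionary'' in \cite{SD1} giving $\|(\mathcal{X},\mathcal{A})\|=0$ if and only if $(\mathcal{X},\mathcal{A})$ is trivial is not proved there --- in the transcendental setting this triviality statement is exactly the new content supplied by the injectivity Lemma \ref{Lemma unique model} of the present paper. The paper's argument is shorter and avoids norms altogether: with $\mathrm{Aut}(X)$ discrete there are no nonzero real holomorphic Hamiltonian fields, so under geodesic K-polystability any test configuration with $\mathrm{DF}(\mathcal{X},\mathcal{A})=0$ has constant associated geodesic ray; since the constant ray is $L^{\infty}$-compatible with the trivial test configuration, Lemma \ref{Lemma unique model} forces $(\mathcal{X},\mathcal{A})$ to be trivial, and the converse direction is immediate from the definitions. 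You need either to invoke that injectivity lemma (or an equivalent uniqueness-of-relatively-K\"ahler-model statement) or to prove it; as written, both bullets have genuine gaps.
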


\begin{rem} It follows from the first point that if we restrict to the special case of polarized manifolds, then we in particular recover the K-polystability results of \cite{Berman, BDL}. 
\end{rem}

\noindent 
The main Theorem \ref{Main geodesic Kps thm} is a consequence of a more elaborate analytic characterization of the vanishing of the Donaldson-Futaki invariant, of which the above is a special case. Its statement involves the  Monge-Amp\`ere energy functional $\mathrm{E}$, the Aubin $\mathrm{J}$-functional, and the intersection number $$\mathrm{M}^{\mathrm{NA}}(\mathcal{X},\mathcal{A}) := \mathrm{DF}(\mathcal{X},\mathcal{A}) + ((\mathcal{X}_{0,red} - \mathcal{X}_0) \cdot \mathcal{A}^n),$$
is a modification of the Donaldson-Futaki invariant, so that $\mathrm{M}^{\mathrm{NA}}(\mathcal{X},\mathcal{A}) \leq  \mathrm{DF}(\mathcal{X},\mathcal{A})$ with equality if and only the central fiber $\mathcal{X}_0$ is reduced. In case the test configuration $(\mathcal{X},\mathcal{A})$ has non-smooth total space $\mathcal{X}$ this quantity can be computed on any resolution (and it is independent of this choice). We refer to Sections \ref{Section preliminaries} and \ref{Section analytic characterization} for full definitions.

\begin{thm} \label{Main theorem analytic product conditions}
Suppose that $(X,\omega)$ is a cscK manifold, with $\alpha := [\omega] \in H^{1,1}(X,\mathbb{R})$ the corresponding K\"ahler class. Let $(\mathcal{X},\mathcal{A})$ be a normal and relatively K\"ahler test configuration for $(X,\alpha)$ whose associated geodesic ray $(\varphi_t)_{t \geq 0}$ satisfies $\mathrm{E}(\varphi_t) = 0$ for each $t \in [0, +\infty)$. Let $J: TX \rightarrow TX$ be the complex structure and $\omega$ a cscK metric on $X$. Then the following statements are equivalent:
\begin{enumerate}
\item $\mathrm{DF}(\mathcal{X},\mathcal{A}) = 0$. 
\item The central fiber $\mathcal{X}_0$ is reduced and $\mathrm{M}^{\mathrm{NA}}(\mathcal{X},\mathcal{A}) = 0$. 
\item The central fiber $\mathcal{X}_0$ is reduced and the Mabuchi K-energy functional is constant along the geodesic ray $(\varphi_t)_{t \geq 0}$ associated to $(\mathcal{X},\mathcal{A})$, i.e. we have $\mathrm{M}(\varphi_t) = \mathrm{M}(\varphi_0)$ for each $t \in [0,+\infty)$. 
\item The central fiber $\mathcal{X}_0$ is reduced and the associated geodesic ray satisfies
$$
\inf_{g \in G} \mathrm{J}(g. \varphi_t) = 0 \;\; \mathrm{and} \; \; \inf_{g \in G} d_1(0, g.\varphi_t)  = 0.
$$
\item The central fiber $\mathcal{X}_0$ is reduced and there is a real holomorphic Hamiltonian vector field $V$ such that the geodesic ray $(\varphi_t)_{t \geq 0}$ associated to $(\mathcal{X},\mathcal{A})$ satisfies $\mathrm{exp}(tV)^*\omega = \omega$ and $\mathrm{exp}(tJV)^*\omega = \omega_{\varphi_t}$.
\item The central fiber $\mathcal{X}_0$ is reduced and the associated geodesic ray $(\varphi_t)$ consists entirely of cscK potentials. More precisely, if $\bar{\mathcal{S}}$ denotes the mean scalar curvature of $\omega_{\varphi_0}$, then
$$
\mathrm{\mathcal{S}(\omega_{\varphi_t}) = \bar{\mathcal{S}}}
$$
for each $t \in [0, +\infty)$. 
\end{enumerate} 
\end{thm}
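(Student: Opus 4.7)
My plan is to establish the six conditions as equivalent via a star-shaped chain of implications, with an asymptotic slope formula for the Mabuchi K-energy as the central analytic input and the cscK hypothesis on $\omega_{\varphi_0}$ supplying the crucial sign conditions. The equivalence $(1) \Longleftrightarrow (2)$ is purely formal: from the identity
\[
\mathrm{M}^{\mathrm{NA}}(\mathcal{X},\mathcal{A}) = \mathrm{DF}(\mathcal{X},\mathcal{A}) + ((\mathcal{X}_{0,red} - \mathcal{X}_0) \cdot \mathcal{A}^n)
\]
the correction term is non-positive and vanishes precisely when $\mathcal{X}_0$ is reduced, so $\mathrm{M}^{\mathrm{NA}} \leq \mathrm{DF}$; combined with K-semistability of cscK manifolds (giving $\mathrm{M}^{\mathrm{NA}} \geq 0$, \cite{SD1, DervanRoss}), $\mathrm{DF} = 0$ forces both summands to vanish and hence $\mathcal{X}_0$ reduced.

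The heart of the proof is $(2) \Longleftrightarrow (3)$, and I expect this step to be the \emph{main obstacle}. The key ingredient is the slope formula
\[
\lim_{t \to +\infty} \frac{\mathrm{M}(\varphi_t)}{t} = \mathrm{M}^{\mathrm{NA}}(\mathcal{X},\mathcal{A})
\]
along the weak geodesic ray associated to $(\mathcal{X},\mathcal{A})$. The real technical challenge is accommodating singular total space: one passes to a resolution and must approximate the $C^{1,1}$ geodesic by smooth subgeodesic rays whose singularity type is prescribed by the test configuration, then match intersection-theoretic expressions for $\mathrm{M}^{\mathrm{NA}}$ with analytic slopes of $\mathrm{E}$, $\mathrm{J}$, and the entropy term in $\mathrm{M}$. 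Granted the slope formula, convexity of $t \mapsto \mathrm{M}(\varphi_t)$ (Berman--Berndtsson) together with the minimizing property $\mathrm{M}(\varphi_0) = \inf \mathrm{M}$ (from cscK) makes the right-derivative of $\mathrm{M}$ non-decreasing and non-negative; if the asymptotic slope vanishes so must the derivative identically, giving constancy.

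For $(3) \Longleftrightarrow (6)$ the first variation formula
\[
\frac{d}{dt} \mathrm{M}(\varphi_t) = -\frac{1}{n!} \int_X \dot\varphi_t \bigl( \mathcal{S}(\omega_{\varphi_t}) - \bar{\mathcal{S}} \bigr) \omega_{\varphi_t}^n
\]
gives $(6) \Longrightarrow (3)$ immediately. The converse passes through the Berman--Berndtsson rigidity theorem for strict convexity of the K-energy along weak geodesics: constancy of $\mathrm{M}$ forces the ray to come from the flow of a real holomorphic vector field, reducing matters to $(5)$, whence cscK is preserved since pullback of a cscK metric by a biholomorphism is cscK. For $(6) \Longrightarrow (5)$, uniqueness of cscK metrics modulo $G := \mathrm{Aut}_0(X)$ lets one write $\omega_{\varphi_t} = g_t^*\omega$ for a smooth path $g_t \in G$; the geodesic equation for $(\varphi_t)$ in the Mabuchi space then forces $(g_t)$ to be a one-parameter subgroup $\exp(tJV)$ with $V$ Killing for $\omega$, via the symmetric-space structure of the $G$-orbit of a cscK potential.

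Finally, $(5) \Longrightarrow (4)$ is direct: taking $g_t = \exp(-tJV) \in G$ and using the $\mathrm{E}$-normalization gives $g_t.\varphi_t \equiv 0$, so both infima vanish. For $(4) \Longrightarrow (3)$ one selects a sequence $g_n \in G$ with $\mathrm{J}(g_n.\varphi_t), d_1(0, g_n.\varphi_t) \to 0$ and invokes $d_1$-continuity of $\mathrm{M}$ at the cscK minimizer, together with the $G$-invariance $\mathrm{M}(g.\varphi) = \mathrm{M}(\varphi)$ (as in \cite{BDL}), forcing $\mathrm{M}(\varphi_t) = \lim_n \mathrm{M}(g_n.\varphi_t) = \mathrm{M}(\varphi_0)$. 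A recurring subtlety throughout is the regularity upgrade from $C^{1,1}$ weak geodesics to smooth cscK paths in steps $(3) \Leftrightarrow (6) \Leftrightarrow (5)$, where one relies on the fact that minimizers of the K-energy in a K\"ahler class are automatically smooth cscK via Chen--Cheng type interior estimates.
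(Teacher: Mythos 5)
Your chain stands or falls on the step $(2)\Leftrightarrow(3)$, and this is where the genuine gap lies. You take as "key ingredient" the slope formula $\lim_{t\to+\infty}\mathrm{M}(\varphi_t)/t=\mathrm{M}^{\mathrm{NA}}(\mathcal{X},\mathcal{A})$ \emph{along the weak geodesic ray itself}, for a normal (possibly singular) test configuration. That formula is precisely what is not available here: the paper proves the Mabuchi asymptotics (Theorem \ref{Theorem generalized thm C}) only for a specially constructed \emph{smooth} ray $(\psi_t)$ that is $C^{\infty}$-compatible with $(\mathcal{X},\mathcal{A})$, and explicitly remarks that the statement for the associated geodesic ray is not clear when the total space is singular. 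Your proposed remedy --- approximate the $C^{1,1}$ geodesic by smooth subgeodesic rays of the prescribed singularity type and "match" the slopes of $\mathrm{E}$, $\mathrm{J}$ and the entropy --- breaks down exactly at the entropy term: $L^{\infty}$-compatibility only gives a bounded difference of potentials, which controls the Deligne/energy parts but gives no control of the entropy of the geodesic's Monge--Amp\`ere measures (entropy is merely lower semicontinuous). The paper's route avoids ever needing $\mathrm{M}$-asymptotics along the geodesic in the singular case: it gets $\mathrm{M}(\psi_t)\le o(t)$ along the compatible smooth ray, uses $G$-coercivity to get $\mathrm{J}_G\le o(t)$, transfers only this growth to the geodesic via bounded differences (Lemma \ref{Lemma change of rays}), then runs the finite-dimensional compactness argument of Proposition \ref{Prop existence of V} to produce the holomorphic Hamiltonian field, i.e.\ proves $(1)\Rightarrow(2)\Rightarrow(5)$ first; constancy of $\mathrm{M}$ in $(3)$ is then deduced afterwards from the classical Futaki computation along the smooth flow ray (Proposition \ref{Prop Mabuchi constant}). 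Note also that your $(1)\Rightarrow(2)$ already needs the slope formula for \emph{some} compatible ray: K-semistability from \cite{SD1, DervanRoss} gives $\mathrm{DF}\ge 0$, not $\mathrm{M}^{\mathrm{NA}}\ge 0$, and since $\mathrm{M}^{\mathrm{NA}}\le\mathrm{DF}$ the reducedness of $\mathcal{X}_0$ requires the argument of Lemma \ref{Lemma JNA bounded}.

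A second step that fails as written is $(4)\Rightarrow(3)$ via "$d_1$-continuity of $\mathrm{M}$ at the cscK minimizer". The Mabuchi functional is only $d_1$-lower semicontinuous, so from $g_n.\varphi_t\to\varphi_0$ in $d_1$ and $G$-invariance you obtain only $\mathrm{M}(\varphi_0)\le\liminf_n\mathrm{M}(g_n.\varphi_t)=\mathrm{M}(\varphi_t)$, which is the inequality you already have from minimality; the required upper bound $\mathrm{M}(\varphi_t)\le\mathrm{M}(\varphi_0)$ does not follow. The correct route (as in the paper) is $(4)\Rightarrow(5)$ by the same compactness argument, or by using that $\inf_{g\in G}\mathrm{J}(g.\varphi_t)$ is attained so that its vanishing places $\varphi_t$ in the $G$-orbit of $\varphi_0$. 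Relatedly, your $(6)\Rightarrow(5)$ ("the symmetric-space structure forces $g_t=\exp(tJV)$") and your appeal to Berman--Berndtsson rigidity for a ray with merely $C^{1,1}$ potentials are asserted rather than proved, and your fallback through Chen--Cheng-type regularity of minimizers is a much heavier input than the paper uses --- in the paper the smoothness of the ray is a consequence of $(5)$, which is established before $(3)$ and $(6)$, so no regularity theory for weak minimizers is needed.
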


\noindent 
The central part of the proof builds on a study of energy functional asymptotics along subgeodesic rays with singularity type prescribed by a given test configuration.
This extends results from \cite{SD1} to the setting of singular test configurations, as explained below. 

\smallskip

\subsection{The singularity type class and asymptotics of the K-energy}

The point of view taken in this paper is to exploit and develop the relation between test configurations and geodesic rays, which is a key idea going back to \cite{Chen00, ArezzoTian, ChenspaceIII} and others. It is well known that, by solving a certain homogeneous complex Monge-Amp\`ere equation \cite{Semmes, Chen00, Donaldsongeodesicequation}, one may associate to a given test configuration a (unique, up to certain choices) geodesic ray in the space of K\"ahler potentials. The corresponding result in the K\"ahler case was proven in \cite{SD1, DervanRoss}. 
Working with the "associated geodesic ray" however turns out to be insufficient for the technique of the proof of Theorem \ref{Main theorem analytic product conditions}. Given a test configuration $(\mathcal{X},\mathcal{A})$ we instead consider a class of \emph{sub}geodesic rays with singularity type prescribed by $(\mathcal{X},\mathcal{A})$,
in the sense described in Section \ref{Subsection associated rays}. We then say that the subgeodesic ray is \emph{$L^{\infty}$-compatible} with $(\mathcal{X},\mathcal{A})$. We will also make use of a $\mathcal{C}^{\infty}$-compatibility condition, see Definition \ref{Definition compatibility smooth}.  
The notion that we introduce is tailored so that it behaves well with respect to energy functional asymptotics in K\"ahler geometry.  
In particular, this leads to the following more general version of \cite[Theorem 5.1]{SD1}, valid also for test configurations whose total space is not smooth:  

\begin{thm} \label{Theorem generalized thm C intro}
Let $(\mathcal{X},\mathcal{A})$ be a normal and relatively K\"ahler test configuration for $(X,\alpha)$. Let $\varphi_0 \in \mathcal{H}_0$. Then there is a smooth ray $[0,+\infty[ \ni t \mapsto \psi_t \in \mathrm{PSH}(X,\omega) \cap C^{\infty}(X) \cap E^{-1}(0)$ on $X$ emanating from $\varphi_0$, that is $C^{\infty}$-compatible with $(\mathcal{X},\mathcal{A})$ and satisfies
$$
\lim_{t \rightarrow +\infty} \frac{{\mathrm{M}}(\psi_t)}{t} = {{\mathrm{M}}^{\mathrm{NA}}}(\mathcal{X},\mathcal{A}).
$$
\end{thm}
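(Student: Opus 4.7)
The plan is to reduce to the smooth-total-space case already handled by \cite[Theorem 5.1]{SD1}, using a resolution of singularities plus a perturbation of the class, and then to pass to the limit while controlling the asymptotic slope of the K-energy by the birational invariance of $\mathrm{M}^{\mathrm{NA}}$.

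First, choose an equivariant log-resolution $\pi: \tilde{\mathcal{X}} \to \mathcal{X}$ of the singular locus of $\mathcal{X}$ and of the central fiber, so that $\tilde{\mathcal{X}}$ is a smooth test configuration for $(X,\alpha)$. The pull-back $\pi^*\mathcal{A}$ is relatively nef but typically fails to be relatively K\"ahler, so I pick a $\pi$-ample $\mathbb{C}^*$-invariant $\mathbb{R}$-divisor $E$ supported on the exceptional locus and set $\tilde{\mathcal{A}}_\epsilon := \pi^*\mathcal{A} - \epsilon\,c_1(E)$, which is relatively K\"ahler for all sufficiently small $\epsilon>0$. Applying \cite[Theorem 5.1]{SD1} to the smooth relatively K\"ahler test configuration $(\tilde{\mathcal{X}},\tilde{\mathcal{A}}_\epsilon)$ yields, for each such $\epsilon$, a smooth ray $(\psi_t^{(\epsilon)})_{t \geq 0}$ in $\mathrm{PSH}(X,\omega) \cap C^{\infty}(X) \cap \mathrm{E}^{-1}(0)$ emanating from $\varphi_0$, which is $C^{\infty}$-compatible with $(\tilde{\mathcal{X}},\tilde{\mathcal{A}}_\epsilon)$ and satisfies $\lim_{t\to+\infty}\mathrm{M}(\psi_t^{(\epsilon)})/t = \mathrm{M}^{\mathrm{NA}}(\tilde{\mathcal{X}},\tilde{\mathcal{A}}_\epsilon)$.

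Next, I verify the birational invariance statement
$$\lim_{\epsilon \to 0^+}\mathrm{M}^{\mathrm{NA}}(\tilde{\mathcal{X}},\tilde{\mathcal{A}}_\epsilon) \;=\; \mathrm{M}^{\mathrm{NA}}(\tilde{\mathcal{X}},\pi^*\mathcal{A}) \;=\; \mathrm{M}^{\mathrm{NA}}(\mathcal{X},\mathcal{A}),$$
where the first equality is continuity of intersection numbers in the class and the second follows from the projection formula $\pi_*(\pi^*\mathcal{A})^{n+1} = \mathcal{A}^{n+1}$ together with the discrepancy identity $K_{\tilde{\mathcal{X}}/\mathbb{P}^1} = \pi^*K_{\mathcal{X}/\mathbb{P}^1} + D$ (with $D$ exceptional) and the analogous identity $\tilde{\mathcal{X}}_{0,\mathrm{red}} = \pi^*\mathcal{X}_{0,\mathrm{red}} + D'$; by construction $D$ and $D'$ intersect trivially with $(\pi^*\mathcal{A})^n$. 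This is precisely what makes $\mathrm{M}^{\mathrm{NA}}$ computable on any resolution, a fact flagged in the statement of the theorem.

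Finally, I construct the actual ray $\psi_t$ as the limit of the $\psi_t^{(\epsilon)}$ as $\epsilon \to 0$, using that $\pi$ is an isomorphism over $\mathbb{P}^1 \setminus \{0\}$: the $\psi_t^{(\epsilon)}$ differ only through the choice of relatively K\"ahler reference potential on $\tilde{\mathcal{X}}$ representing $\tilde{\mathcal{A}}_\epsilon$, and these reference potentials can be chosen to converge smoothly on compact subsets away from the exceptional locus to a relatively semi-positive potential for $\pi^*\mathcal{A}$, whose restriction to $X = \mathcal{X}_1$ is smooth. The limiting ray $\psi_t$ is then smooth on $X$, lies in $\mathrm{E}^{-1}(0)$ after the standard normalisation, and is $C^\infty$-compatible with $(\mathcal{X},\mathcal{A})$ by definition, since compatibility with $(\mathcal{X},\mathcal{A})$ is defined via resolutions. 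The main obstacle is controlling the limit of $\mathrm{M}(\psi_t^{(\epsilon)})/t$ as $\epsilon \to 0$ together with $t \to \infty$: here I expect to use the Chen--Tian decomposition $\mathrm{M} = \bar{\mathcal{S}}\,\mathrm{E} - \mathrm{E}^{\mathrm{Ric}} + \mathrm{H}$, so that the Monge--Amp\`ere and Ricci-energy slopes are continuous in $\epsilon$ via intersection-theoretic formulas from \cite{SD1}, while the entropy term is handled by a diagonal extraction $\epsilon(t) \to 0$ as $t \to \infty$ and lower semicontinuity of entropy; the combination of these two controls, upper bound from the smooth approximants and lower bound from semicontinuity, is the step where the reducedness correction $((\mathcal{X}_{0,\mathrm{red}} - \mathcal{X}_0) \cdot \mathcal{A}^n)$ naturally emerges and $\mathrm{DF}$ is replaced by $\mathrm{M}^{\mathrm{NA}}$ in the asymptotic.
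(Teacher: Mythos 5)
Your reduction to the smooth case via an $\epsilon$-perturbation of the class on a resolution leaves the decisive step unproved. For each fixed $\epsilon$, \cite[Theorem 5.1]{SD1} gives $\lim_{t\to\infty}\mathrm{M}(\psi_t^{(\epsilon)})/t=\mathrm{M}^{\mathrm{NA}}(\tilde{\mathcal{X}},\tilde{\mathcal{A}}_\epsilon)$ with no control on the rate of convergence, so a diagonal choice $\epsilon(t)\to 0$ does not yield $\mathrm{M}(\psi_t^{(\epsilon(t))})/t\to\lim_{\epsilon\to 0}\mathrm{M}^{\mathrm{NA}}(\tilde{\mathcal{X}},\tilde{\mathcal{A}}_\epsilon)$ without uniform-in-$\epsilon$ estimates, which you neither state nor have available from the cited result used as a black box. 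Likewise, lower semicontinuity of the entropy gives only a one-sided inequality, whereas the theorem asserts an exact limit; in \cite{SD1} the correction term $((\mathcal{X}_{0,\mathrm{red}}-\mathcal{X}_0)\cdot\mathcal{A}^n)$ is extracted from a local analysis of the entropy near the central fiber (the $\log|f_j|$ singularities of the Green function $\psi_D$), and it cannot simply ``emerge'' from semicontinuity plus an upper bound from the approximants. Finally, $C^{\infty}$-compatibility of the limiting ray with $(\mathcal{X},\mathcal{A})$ does not follow from smooth convergence of reference potentials on compact subsets away from the exceptional locus: by Definition \ref{Definition compatibility smooth} compatibility is the requirement that $\Phi\circ\mu+\psi_D$ extend smoothly \emph{across} the central fiber of the resolution, and convergence away from that fiber says nothing about such an extension. (A smaller point: you subtract a ``$\pi$-ample'' divisor $E$; what is needed is an effective exceptional divisor with $-E$ relatively ample, so the sign conventions should be fixed, though this is cosmetic.)

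The perturbation is also what creates the double-limit difficulty, and the paper avoids it altogether. Its proof takes a smooth model $\rho:\hat{\mathcal{X}}\to\mathcal{X}$ dominating $X\times\mathbb{P}^1$, keeps the merely relatively nef class $\rho^*\mathcal{A}$, chooses a smooth $S^1$-invariant representative $\Omega$ of $\mathcal{A}$, and defines $\psi_t$ directly by restricting $\rho^*\Omega$ to the fibers over $\tau\neq 0$ and transporting to $X$ via the $\mathbb{C}^*$-action; this ray is smooth and $C^{\infty}$-compatible with $(\mathcal{X},\mathcal{A})$ by construction, and one then observes that the proof of \cite[Theorem 5.1]{SD1} goes through verbatim for this ray because neither the Deligne-functional computation (carried out over $\pi^{-1}(\mathbb{P}^1\setminus\Delta_\varepsilon)$) nor the intersection numbers (via the projection formula) see the central fiber, which is the only place where $\rho^*\Omega$ loses positivity. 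If you wish to salvage your route, you would need either quantitative, $\epsilon$-uniform error bounds in the slope formula for $(\tilde{\mathcal{X}},\tilde{\mathcal{A}}_\epsilon)$, or matching upper and lower bounds for the entropy slope of the limit ray; either way you would effectively be redoing the analysis of \cite{SD1} that the direct argument invokes once, with no gain in generality.
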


\begin{rem} In case $\mathcal{X}$ is smooth and dominating (see Definition \ref{Definition smooth and dom}) the above result holds also for the unique weak geodesic ray associated to $(\mathcal{X},\mathcal{A})$, see \cite{SD1}. 
\end{rem}

\begin{rem} \emph{(Deligne functionals, \cite{SD1})} The step from smooth test configurations to allowing the central fiber to be singular is straightforward for many energy functionals in K\"ahler geometry (in particular this holds for the so called \emph{Deligne functionals} introduced in \cite{SD1}, see also Section \ref{Section Deligne functionals}). In case $\mathcal{X}$ is smooth the above result also holds for the associated geodesic ray, but this is not clear if the test configuration has singular total space. Interestingly, it seems convenient to consider the singularity type prescribed by the test configuration rather than the singularity type of the associated geodesic ray. 
\end{rem}

\noindent Given a test configuration $(\mathcal{X},\mathcal{A})$ for $(X,\alpha)$ we can therefore associate to it a set of subgeodesic rays on $X$ of prescribed singularity type, so that they are all $L^{\infty}$-compatible with $(\mathcal{X},\mathcal{A})$. This in turn gives an equivalence relation of test configurations by saying that two test configurations are equivalent if and only if there is a subgeodesic ray that is compatible with both of them. The reader may compare with the discussion on "parallell rays" introduced in \cite{ChenTang}. 

\noindent An advantage of the present approach is that the above equivalence relation on test configurations in fact determines the total space up to $\mathbb{C}^*$-equivariant isomorphism. Indeed, we have the following result:

\begin{thm} \label{Main lemma unique model intro}
Any subgeodesic ray $[0,+\infty) \ni t \mapsto \varphi_t \in \mathrm{PSH}(X,\omega) \cap L^{\infty}(X)$ is $L^{\infty}$-compatible with at most one normal, relatively K\"ahler test configuration $(\mathcal{X}, \mathcal{A})$ for $(X,\alpha)$. 
\end{thm}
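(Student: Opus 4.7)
The plan is to translate uniqueness of the test configuration into equality of cohomology classes on a common resolution, and then into equality of birational contractions. Suppose $(\varphi_t)_{t \geq 0}$ is $L^{\infty}$-compatible with both normal, relatively K\"ahler test configurations $(\mathcal{X},\mathcal{A})$ and $(\mathcal{X}',\mathcal{A}')$ for $(X,\alpha)$. Since both are $\mathbb{C}^*$-equivariantly isomorphic to $X \times \mathbb{C}^*$ away from their central fibers, there is a canonical equivariant birational map from $\mathcal{X}$ to $\mathcal{X}'$. My first step is to pass to a common dominating normal test configuration $\mathcal{Y}$, for example the normalization of the closure of the graph of this birational map, with $\mathbb{C}^*$-equivariant proper birational morphisms $\pi : \mathcal{Y} \to \mathcal{X}$ and $\pi' : \mathcal{Y} \to \mathcal{X}'$.

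Second, I would use the $L^{\infty}$-compatibility hypothesis to compare the pulled back classes on $\mathcal{Y}$. Fix smooth relatively K\"ahler representatives $\Omega \in \pi^*\mathcal{A}$ and $\Omega' \in (\pi')^*\mathcal{A}'$ with global $S^1$-invariant potentials $\Phi, \Phi'$ normalized against a common reference form extending $\omega$ on $X$. By the definition of $L^{\infty}$-compatibility recalled in Subsection~\ref{Subsection associated rays}, the restrictions of $\Phi$ and $\Phi'$ to the fiber $\mathcal{Y}_{\tau} \simeq X$ over any $\tau = e^{-t + i\theta} \neq 0$ each differ from $\varphi_t$ by a function bounded uniformly in $\tau$. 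Consequently $\Phi - \Phi'$ is globally bounded on $\mathcal{Y}$; since $\Omega - \Omega' = dd^c(\Phi - \Phi')$ is smooth with bounded global $dd^c$-potential, its Bott--Chern class vanishes by a standard integration-by-parts argument paired against relatively ample test curves, giving $\pi^*\mathcal{A} = (\pi')^*\mathcal{A}'$ in $H^{1,1}(\mathcal{Y},\mathbb{R})$.

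Third, I would deduce the isomorphism $\mathcal{X} \cong \mathcal{X}'$ from this equality combined with relative K\"ahlerness. For any $\pi$-exceptional prime divisor $E \subset \mathcal{Y}$ and any curve $\beta \subset E$ contracted by $\pi$, the projection formula gives $(\pi^*\mathcal{A}) \cdot \beta = 0$, hence $(\pi')^*\mathcal{A}' \cdot \beta = 0$ by the established equality of classes; if $\pi'$ did not contract $\beta$, relative K\"ahlerness of $\mathcal{A}'$ would force this intersection to be positive, so $\pi'$ contracts every such $\beta$. Then $\pi'|_E$ collapses the positive-dimensional fibers of $\pi|_E$, forcing $\dim \pi'(E) \leq \dim \pi(E) \leq \dim \mathcal{X} - 1$, which shows that $E$ is $\pi'$-exceptional as well. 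By symmetry, $\pi$ and $\pi'$ contract the same prime divisors, and Zariski's main theorem applied $\mathbb{C}^*$-equivariantly to proper birational morphisms between normal varieties with the same exceptional locus yields a $\mathbb{C}^*$-equivariant isomorphism $\mathcal{X} \cong \mathcal{X}'$ identifying $\mathcal{A}$ with $\mathcal{A}'$.

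I expect the cohomological identification step in the second paragraph to be the technical heart. The potentials $\Phi$ and $\Phi'$ may diverge along the central fiber of $\mathcal{Y}$, and converting the fiberwise bounds supplied by $L^{\infty}$-compatibility into a genuinely global bound on $\Phi - \Phi'$ requires uniformity in $t$ as well as a careful choice of reference forms on $\mathcal{Y}$ that are simultaneously adapted to both $\pi^*\mathcal{A}$ and $(\pi')^*\mathcal{A}'$. The subsequent passage from equality of cohomology classes to biregular isomorphism is more standard, but must be carried out equivariantly in the possibly singular setting.
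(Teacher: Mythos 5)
Your overall strategy coincides with the paper's: pass to a common dominating model, use $L^{\infty}$-compatibility with both test configurations to identify the pulled-back classes there, then use relative K\"ahlerness and the projection formula on contracted curves to show that the canonical bimeromorphic map $\mathcal{X}\dashrightarrow \mathcal{X}'$ is a morphism, and conclude by symmetry. However, your concluding step is not a valid deduction: knowing that $\pi$ and $\pi'$ contract the same prime divisors does \emph{not} imply that the induced bimeromorphic map is an isomorphism --- small modifications (flops) contract no divisors at all and still fail to be isomorphisms, and Zariski's main theorem gives no such statement. What is actually needed, and what your intersection computation does yield once you run it for \emph{every} curve $\beta$ contained in a positive-dimensional fiber of $\pi$ (not only curves lying inside $\pi$-exceptional divisors), is the rigidity criterion used in the paper: the map $\mathcal{X}\dashrightarrow\mathcal{X}'$ is a morphism if and only if every curve contracted by $\pi$ is contracted by $\pi'$. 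Two further points in that computation deserve care: (i) it is the twisted class $\mathcal{A}'+\pi_2^{*}\eta'$ (with $\eta'$ K\"ahler on $\mathbb{P}^1$), not $\mathcal{A}'$ itself, that is K\"ahler, so you must observe that $\beta$ lies in a fiber over $\mathbb{P}^1$ and hence the $\eta'$-term pairs to zero; (ii) in the non-projective K\"ahler setting the fibers of your $\mathcal{Y}$ (normalization of the graph) need not contain any curves; as in the paper one should replace $\mathcal{Y}$ by a model obtained from blow-ups with smooth centres, so that the dominating morphisms are projective and their fibers are covered by curves.

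The class-comparison step also needs to be rephrased. There are no global bounded potentials for $\Omega$ and $\Omega'$ against a common reference form: by \cite[Proposition 3.10]{SD1} the pulled-back classes satisfy $\mu_1^{*}\mathcal{A} = r^{*}p_1^{*}\alpha + [D_1]$ and $\mu_2^{*}\mathcal{A}' = r^{*}p_1^{*}\alpha + [D_2]$ with $D_1, D_2$ divisors supported on the central fiber, so the honest objects to compare are the Green functions $\psi_{D_1},\psi_{D_2}$. $L^{\infty}$-compatibility with both test configurations says exactly that $r^{*}\Phi+\psi_{D_1}$ and $r^{*}\Phi+\psi_{D_2}$ are locally bounded, hence $\psi_{D_1}-\psi_{D_2}\in L^{\infty}_{\mathrm{loc}}$, and comparing Lelong numbers (or multiplier ideals) gives $D_1=D_2$, hence $\mu_1^{*}\mathcal{A}=\mu_2^{*}\mathcal{A}'$ as classes. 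Your proposed shortcut of pairing a bounded $dd^c$-potential against ``relatively ample test curves'' would at best give numerical information, not vanishing of the Bott--Chern class, and is in any case unnecessary: with the divisorial bookkeeping above, the equality of classes follows directly, and the rest of your argument (suitably corrected as in the previous paragraph) is the paper's proof.
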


\noindent More precisely, suppose that $(\mathcal{X},\mathcal{A})$ and $(\mathcal{X}',\mathcal{A}')$ are two cohomological test configurations for $(X,\alpha)$ whose associated geodesic rays are of the same singularity type. Then the natural $\mathbb{C}^*$-equivariant isomorphism $\mathcal{X} \setminus \mathcal{X}_0 \rightarrow \mathcal{X}' \setminus \mathcal{X}_0'$ factoring through $X \times \mathbb{P}^1 \setminus \{0\}$, extends to a $\mathbb{C}^*$-equivariant isomorphism $\mathcal{X} \rightarrow \mathcal{X}'$. The above can then be seen as an injectivity result for the assignment 
$$
(\mathcal{X},\mathcal{A}) \mapsto \left\{ (\varphi_t)_{t \geq 0}^{(\mathcal{X},\mathcal{A})} \right\}, 
$$
sending a test configuration to the equivalence class of subgeodesic rays that are $L^{\infty}$-compatible with it. 
This is however \emph{not} a bijection in general, since test configurations with the same total space (but different polarizations) have different associated singularity types. This result extends (with a completely different proof) the concept of \emph{unique ample model} in the context of polarized manifolds, see \cite{BHJ1}. 

\subsection{The case of $\mathrm{Aut}(X)$ discrete and applications to relative K-stability} 
We finally make explicit the consequences of the above results in the important special case of compact K\"ahler manifolds with discrete automorphism group. 
In particular, we then obtain the following strengthened version of Theorem \ref{Main theorem analytic product conditions}: 

\begin{thm} \label{Theorem equivalent characterizations discrete intro}
Suppose that $(X,\omega)$ is a cscK manifold with discrete automorphism group. Let $\alpha := [\omega] \in H^{1,1}(X,\mathbb{R})$ be the corresponding K\"ahler class. For any cohomological test configuration $(\mathcal{X},\mathcal{A})$ for $(X,\alpha)$ the following are equivalent: 
\begin{enumerate}
\item $\mathrm{DF}(\mathcal{X},\mathcal{A}) = 0$
\item $\mathrm{J}^{\mathrm{NA}}(\mathcal{X},\mathcal{A}) = 0$
\item $(\mathcal{X},\mathcal{A})$ is the trivial test configuration. 
\end{enumerate}
\end{thm}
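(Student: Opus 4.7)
The plan is to prove (3) $\Rightarrow$ (1), (3) $\Rightarrow$ (2), (1) $\Rightarrow$ (3), and (2) $\Rightarrow$ (3). The two implications out of (3) are immediate, since $\mathrm{DF}$ and $\mathrm{J}^{\mathrm{NA}}$ both vanish on the trivial test configuration.

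For (1) $\Rightarrow$ (3), I would apply Theorem \ref{Main theorem analytic product conditions}. First I normalize the associated geodesic ray $(\varphi_t)$ by a linear translation in $t$ so that $\mathrm{E}(\varphi_t) = 0$; this is achieved by replacing $\mathcal{A}$ with a cohomological twist pulled back from the base $\mathbb{P}^1$, and it does not affect $\mathrm{DF}$ (which is invariant under such a shift) nor the $L^{\infty}$-compatibility class. The hypothesis $\mathrm{DF}(\mathcal{X},\mathcal{A}) = 0$ then places us in the scope of Theorem \ref{Main theorem analytic product conditions}, and condition (5) there produces a real holomorphic Hamiltonian vector field $V$ on $X$ such that $\omega_{\varphi_t} = \mathrm{exp}(tJV)^*\omega$. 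Since $\mathrm{Aut}(X)$ is discrete, the Lie algebra of real holomorphic vector fields on $X$ is trivial, so $V = 0$ and the ray is constant. Theorem \ref{Main lemma unique model intro}, applied to this trivial ray (which is $L^{\infty}$-compatible with both $(\mathcal{X},\mathcal{A})$ and with the trivial test configuration), then forces the two test configurations to coincide.

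For (2) $\Rightarrow$ (3), the argument will rest on the fact that $\mathrm{J}^{\mathrm{NA}}$ equals the asymptotic slope of the Aubin $\mathrm{J}$-functional along any compatible ray, a statement entirely analogous to the $\mathrm{M}$-asymptotics of Theorem \ref{Theorem generalized thm C intro}. Vanishing of $\mathrm{J}^{\mathrm{NA}}(\mathcal{X},\mathcal{A})$ forces this slope to be zero, and since $\mathrm{J}$ is non-negative and comparable to the $d_1$-distance on the space of K\"ahler potentials, the compatible ray must be trivial. Theorem \ref{Main lemma unique model intro} then upgrades triviality of the ray to triviality of $(\mathcal{X},\mathcal{A})$.

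The main obstacle will be the $\mathrm{E}$-normalization step in (1) $\Rightarrow$ (3): one must verify that translating the ray so that $\mathrm{E} \equiv 0$ alters neither the $L^{\infty}$-compatibility class nor $\mathrm{DF}$, so that the full hypotheses of Theorem \ref{Main theorem analytic product conditions} are in force. Once this is established, the discreteness of $\mathrm{Aut}(X)$ supplies the desired rigidity via the vanishing of holomorphic vector fields, and the proof closes via the uniqueness Theorem \ref{Main lemma unique model intro}.
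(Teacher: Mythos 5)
Your argument is correct and is essentially the paper's own proof: the paper establishes $(1)\Rightarrow(3)$ precisely by normalizing through the projection $\mathcal{P}(\mathcal{X},\mathcal{A})=(\mathcal{X},\mathcal{A}-c[\mathcal{X}_0])$ (Proposition \ref{Prop basic properties of projection}), invoking condition $(5)$ of Theorem \ref{Main theorem analytic product conditions} so that discreteness of $\mathrm{Aut}(X)$ forces $V=0$ and the geodesic to be constant, and then concluding with the injectivity Lemma \ref{Lemma unique model}, while $(2)\Rightarrow(3)$ uses, as you propose, the slope formula $\mathrm{J}(\varphi_t)/t\to\mathrm{J}^{\mathrm{NA}}(\mathcal{X},\mathcal{A})$ together with the comparability of $\mathrm{J}$ and $d_1$ along the geodesic. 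The only imprecision is your claim that the twist by $c[\mathcal{X}_0]$ leaves the $L^{\infty}$-compatibility class unchanged --- it shifts the compatible rays by the linear term $ct$, what is genuinely unchanged being the total space and (as you correctly note) $\mathrm{DF}$ --- but this is exactly how the paper's projection is set up, so your proof matches the paper's at the same level of precision.
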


\noindent It is worth pointing out that the above results entail that uniform K-stability implies K-stability, as the terminology suggests: 

\begin{cor}
If $(X,\alpha)$ is uniformly K-stable then it is K-stable. 
\end{cor}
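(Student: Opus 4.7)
\smallskip

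\noindent\emph{Proof plan.} Unpacking definitions, uniform K-stability of $(X,\alpha)$ asserts the existence of a constant $\epsilon > 0$ such that $\mathrm{DF}(\mathcal{X},\mathcal{A}) \geq \epsilon \, \|(\mathcal{X},\mathcal{A})\|$ (equivalently $\geq \epsilon \, \mathrm{J}^{\mathrm{NA}}(\mathcal{X},\mathcal{A})$, up to a positive multiplicative constant) for every normal, relatively K\"ahler test configuration $(\mathcal{X},\mathcal{A})$ for $(X,\alpha)$, whereas K-stability requires $\mathrm{DF}(\mathcal{X},\mathcal{A}) > 0$ on every non-trivial such test configuration. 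Granted uniform K-stability, the proof reduces to the single claim that $\|(\mathcal{X},\mathcal{A})\| = 0$ forces $(\mathcal{X},\mathcal{A})$ to be trivial.

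To establish that claim I plan to translate it into an analytic statement along a compatible ray and then invoke the injectivity of the assignment sending a test configuration to its $L^{\infty}$-compatibility class. Concretely, pick any subgeodesic ray $(\varphi_t)_{t \geq 0}$ that is $L^{\infty}$-compatible with $(\mathcal{X},\mathcal{A})$ and normalised by $\mathrm{E}(\varphi_t) = 0$. The non-Archimedean norm computes the asymptotic slope of $t \mapsto \mathrm{J}(\varphi_t)/t$ along the associated weak geodesic ray; combined with convexity of $\mathrm{J}$ in the geodesic variable, vanishing of that slope should force the geodesic ray to be constant, so that the trivial test configuration is itself $L^{\infty}$-compatible with $(\varphi_t)$. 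At this stage Theorem~\ref{Main lemma unique model intro} identifies $(\mathcal{X},\mathcal{A})$ with the trivial test configuration, completing the claim.

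With the claim in hand, the corollary is a one-line consequence: for any non-trivial normal, relatively K\"ahler test configuration $(\mathcal{X},\mathcal{A})$ one has $\|(\mathcal{X},\mathcal{A})\| > 0$, and therefore $\mathrm{DF}(\mathcal{X},\mathcal{A}) \geq \epsilon \, \|(\mathcal{X},\mathcal{A})\| > 0$, which is exactly K-stability. I expect the main obstacle to sit inside the claim itself: going from the analytic triviality of the associated geodesic ray to the algebro-geometric triviality of the total space genuinely uses the unique model result of Theorem~\ref{Main lemma unique model intro}, and the precise identification of the norm with the $\mathrm{J}$-asymptotic slope along $L^{\infty}$-compatible rays has to be handled carefully since the total space of $(\mathcal{X},\mathcal{A})$ need not be smooth.
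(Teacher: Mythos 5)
Your proposal is correct and follows essentially the same route the paper intends: the corollary rests on the single fact that $\mathrm{J}^{\mathrm{NA}}(\mathcal{X},\mathcal{A})=0$ forces $(\mathcal{X},\mathcal{A})$ to be trivial, which the paper obtains from the equivalence $(2)\Leftrightarrow(3)$ of Theorem \ref{Theorem equivalent characterizations discrete}, itself proved by exactly the mechanism you describe — the $\mathrm{J}$-slope formula along $L^\infty$-compatible rays, constancy of the associated geodesic, and then the injectivity result of Lemma \ref{Lemma unique model} (see also the proof of Proposition \ref{Prop comparison polarized} and the appendix). The only point to make explicit is the passage from vanishing slope to a constant geodesic: either start the geodesic at $\varphi_0=0$ so that convexity of $\mathrm{J}$, $\mathrm{J}\geq 0$ and $\mathrm{J}(\varphi_0)=0$ give $\mathrm{J}(\varphi_t)\equiv 0$, or use the comparability of $\mathrm{J}$ with $d_1$ together with linearity of $d_1$ along geodesics, as the paper does.
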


\noindent This gives one way of showing that cscK manifolds are K-stable. In fact, the part of proving uniform K-stability using the energy functional asymptotics developed in \cite{SD1, DervanRoss} is not much more difficult than proving K-semistability. 
In this setting we can also prove that geodesic K-stability is equivalent to K-stability in the usual sense. This then yields a new and direct proof of K-stability of cscK manifolds (cf. \cite{DervanRoss} and \cite{SD1} for previously known proofs): 

\begin{cor}
Suppose that $(X,\omega)$ is a cscK manifold admitting no non-trivial holomorphic vector fields. Then $(X,\alpha)$ is K-stable. 
\end{cor}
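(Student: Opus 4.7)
The plan is to invoke Theorem \ref{Theorem equivalent characterizations discrete intro} directly, after translating the hypothesis on holomorphic vector fields into discreteness of $\mathrm{Aut}(X)$. Indeed, on a compact K\"ahler manifold the (real) Lie algebra of $\mathrm{Aut}(X)$ is generated by the real parts of holomorphic vector fields on $X$. Hence the hypothesis that $X$ carries no non-trivial holomorphic vector field forces the identity component of $\mathrm{Aut}(X)$ to be a single point, and so $\mathrm{Aut}(X)$ is discrete. This places us in the setting where Theorem \ref{Theorem equivalent characterizations discrete intro} applies.

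The second step is to combine that theorem with K-semistability of cscK manifolds. The latter was established in \cite{SD1, DervanRoss} and is recalled in the introduction: for every normal, relatively K\"ahler test configuration $(\mathcal{X},\mathcal{A})$ for $(X,\alpha)$ we have $\mathrm{DF}(\mathcal{X},\mathcal{A}) \geq 0$. Theorem \ref{Theorem equivalent characterizations discrete intro} upgrades this to the statement that equality $\mathrm{DF}(\mathcal{X},\mathcal{A}) = 0$ forces $(\mathcal{X},\mathcal{A})$ to be the trivial test configuration. Assembling these two inputs yields $\mathrm{DF}(\mathcal{X},\mathcal{A}) > 0$ strictly for every non-trivial normal, relatively K\"ahler test configuration for $(X,\alpha)$, which is exactly the definition of K-stability.

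Alternatively, one may route the argument through Theorem \ref{Main geodesic Kps thm} and the second bullet of Proposition \ref{Prop equivalence}: the cscK hypothesis gives geodesic K-polystability of $(X,\alpha)$, which under $\mathrm{Aut}(X)$ discrete is equivalent to K-polystability in the sense of Definition \ref{Def Kps}; and in the absence of non-trivial $\mathbb{C}^*$-actions on $X$ the only product test configuration is the trivial one, so K-polystability collapses to K-stability. Either route reaches the same conclusion.

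At the level of this corollary there is no substantive obstacle; the entire analytic content has been absorbed into Theorem \ref{Theorem equivalent characterizations discrete intro}, which itself is a special case of Theorem \ref{Main theorem analytic product conditions}. The genuinely hard input is the study of subgeodesic rays $L^{\infty}$-compatible with possibly singular test configurations, together with the K-energy asymptotic formula of Theorem \ref{Theorem generalized thm C intro} and the injectivity statement of Theorem \ref{Main lemma unique model intro}. Once these tools are available, the corollary follows in one line.
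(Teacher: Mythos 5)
Your proposal is correct and follows essentially the same route as the paper: combine the K-semistability of cscK manifolds from \cite{SD1, DervanRoss} with the characterization of Theorem \ref{Theorem equivalent characterizations discrete intro} (whose content, as the paper's sketch indicates, is precisely that $\mathrm{DF}(\mathcal{X},\mathcal{A})=0$ forces the associated geodesic ray to be constant, whence triviality via the injectivity Lemma \ref{Lemma unique model}), after noting that the absence of non-trivial holomorphic vector fields makes $\mathrm{Aut}(X)$ discrete. Your alternative route via geodesic K-polystability and Proposition \ref{Prop equivalence} is the same circle of ideas and is also consistent with the paper.
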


\noindent The idea of the proof is as follows: Suppose that $\alpha \in H^{1,1}(X,\mathbb{R})$ admits a cscK representative. By the K-semistability result \cite[Theorem A]{SD1} (see also \cite[Theorem 1.1]{DervanRoss})
it suffices to consider test configurations $(\mathcal{X},\mathcal{A})$ for $(X,\alpha)$ that satisfy $\mathrm{DF}(\mathcal{X},\mathcal{A}) = 0$. We then wish to show that $(\mathcal{X},\mathcal{A})$ is trivial. The strategy of the proof is to show that the geodesic ray associated to such a test configuration must be the constant ray. 
As a particular case of the injectivity result Theorem \ref{Main lemma unique model intro} we then obtain the sought result. 


In an appendix by R. Dervan we finally point out implications of the main results of this paper also for equivariant K-polystability and relative K-stability for K\"ahler manifolds, studied in \cite{Dervanrelative}. First of all, in order to further compare the main results of this paper with the existing literature, we relate geodesic K-polystability also to equivariant K-polystability. It turns out that geodesic K-polystability is the stronger notion:  

\begin{thm} \label{Appendix intro 1}
If $(X,\alpha)$ is geodesically K-polystable, then it is equivariantly K-polystable \emph{(in the sense of Definition \ref{Appendix def equivariant stability})}. 
\end{thm}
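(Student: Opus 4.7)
The plan is to exploit the uniqueness-of-total-space result (Theorem \ref{Main lemma unique model intro}): given a $T$-equivariant test configuration with vanishing Donaldson-Futaki invariant, I will exhibit an explicit equivariant product test configuration with the same associated geodesic ray and then transfer the product structure across the resulting isomorphism.

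Fix a maximal compact torus $T \subset \mathrm{Aut}(X)$ as in Definition \ref{Appendix def equivariant stability} and a $T$-invariant representative $\omega \in \alpha$. Let $(\mathcal{X},\mathcal{A})$ be a normal, relatively K\"ahler, $T$-equivariant test configuration for $(X,\alpha)$ with $\mathrm{DF}(\mathcal{X},\mathcal{A}) = 0$. By geodesic K-polystability there exists a real holomorphic Hamiltonian vector field $V$ such that the associated weak geodesic ray $(\varphi_t)_{t \geq 0}$ satisfies $\omega_{\varphi_t} = \exp(tJV)^*\omega$. Because $(\mathcal{X},\mathcal{A})$ is $T$-equivariant and the weak geodesic ray is uniquely determined by the test configuration, $(\varphi_t)$ is $T$-invariant; combined with the $T$-invariance of $\omega$ this forces the flow of $JV$ (and hence $V$) to commute with $T$.

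Next I would construct the product test configuration $(\mathcal{X}_V, \mathcal{A}_V) := (X \times \mathbb{C}, p_1^*\alpha)$ equipped with the $\mathbb{C}^*$-action
\[
\tau \cdot (x,z) = \bigl(\sigma_\tau(x),\, \tau z\bigr),
\]
where $\sigma_\tau$ denotes the holomorphic extension to $\mathbb{C}^*$ of the real one-parameter group $t \mapsto \exp(tJV)$ (this extension exists because $V$ is real holomorphic Hamiltonian, so that $V$ and $JV$ commute and jointly integrate to a complex one-parameter subgroup of $\mathrm{Aut}(X)$). Since $V$ commutes with $T$, so does $\sigma_\tau$, and $(\mathcal{X}_V,\mathcal{A}_V)$ becomes a $T$-equivariant product test configuration when $T$ acts diagonally and trivially on the second factor. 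A direct computation shows that the weak geodesic ray associated to $(\mathcal{X}_V,\mathcal{A}_V)$ is precisely $t \mapsto \exp(tJV)^*\omega$, matching the ray coming from $(\mathcal{X},\mathcal{A})$.

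At this point both test configurations are $L^{\infty}$-compatible (in fact $\mathcal{C}^{\infty}$-compatible) with the same geodesic ray, so Theorem \ref{Main lemma unique model intro} delivers a $\mathbb{C}^*$-equivariant isomorphism $(\mathcal{X},\mathcal{A}) \simeq (\mathcal{X}_V,\mathcal{A}_V)$ extending the canonical identification on $X \times \mathbb{P}^1 \setminus \{0\}$. The main obstacle I anticipate is promoting this to a \emph{$T$-equivariant} isomorphism. My approach is to re-run the proof of Theorem \ref{Main lemma unique model intro} $T$-equivariantly, observing that on the Zariski-dense open set $X \times \mathbb{P}^1 \setminus \{0\}$ the isomorphism is manifestly $T$-equivariant (it is the canonical identification), so that by normality of $\mathcal{X}$ and rigidity of group actions the extension across the central fiber is automatically $T$-equivariant as well. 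This exhibits $(\mathcal{X},\mathcal{A})$ as a $T$-equivariant product test configuration and completes the proof of equivariant K-polystability.
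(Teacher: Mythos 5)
There is a genuine gap at the very first construction: the ``product test configuration $(\mathcal{X}_V,\mathcal{A}_V)$'' need not exist. Geodesic K-polystability only gives you a real holomorphic Hamiltonian vector field $V\in\mathfrak{h}$; the pair $(V,JV)$ integrates to an action of the \emph{additive} group $(\mathbb{C},+)$ on $X$, but this descends to a $\mathbb{C}^*$-action $\sigma_\tau$ only if the flow $\exp(tV)$ is periodic, i.e.\ only if $V$ is a rational element generating a circle action. A general Hamiltonian Killing field is irrational (its flow has non-closed orbits dense in a subtorus), and then there is no $\mathbb{C}^*$-action $\sigma$ with $\sigma_{e^{-t}}=\exp(tJV)$, hence no test configuration $(\mathcal{X}_V,\mathcal{A}_V)$ to which the injectivity result (Theorem \ref{Main lemma unique model intro}) could be applied. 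The paper is explicit about this restriction: the remark following Proposition \ref{Prop comparison polarized} invokes the product configuration induced by $V$ only under the additional hypothesis that $V$ ``generates a $\mathbb{C}^*$-action on $X$''. This irrationality issue is precisely what the appendix is designed to handle: instead of building a new product configuration, Dervan twists the given $\mathbb{C}^*$-action $\lambda$ on the \emph{same} total space $\mathcal{X}$ by a (possibly irrational) element of $\mathfrak{t}$, obtaining an ``irrational test configuration'' $(\mathcal{X},\mathcal{A},\gamma)$ whose associated geodesic is constant; its norm then vanishes, and Lemma \ref{lemma} (a K\"ahler version of the Codogni--Stoppa argument, which is where the injectivity Lemma \ref{Lemma unique model} actually enters) yields $\mathcal{X}_0\cong X$, which is exactly what Definition \ref{Appendix def equivariant stability} asks for.

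Two further, more minor points. First, your claim that $T$-invariance of the geodesic ray forces the flow of $JV$ to commute with $T$ is not justified as stated: invariance of the potentials $\varphi_t$ under $T$ only says $\exp(tJV)^*\omega$ is $T$-invariant, not that $\exp(tJV)$ commutes with $T$; some argument (e.g.\ via maximality of $T$ and the structure of the reductive group $\mathrm{Aut}_0(X)$, as used implicitly in the appendix when twisting by an element of $\mathfrak{t}$) is needed here, and it is also needed in your setup to make $(\mathcal{X}_V,\mathcal{A}_V)$ $T$-equivariant. Second, your final step of promoting the isomorphism to a $T$-equivariant one is both hand-waved and unnecessary: equivariant K-polystability in the sense of Definition \ref{Appendix def equivariant stability} only requires the central fiber to satisfy $\mathcal{X}_0\cong X$ (together with $F(\beta_i)=0$, which follows from K-semistability and is not addressed in your write-up), not that $(\mathcal{X},\mathcal{A})$ be a $T$-equivariant product.
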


\noindent This result improves upon \cite{Dervanrelative} and moreover yields the following immediate corollary: 

\begin{cor}
\label{Appendix main result} If $(X,\alpha)$ admits a cscK metric, then it is equivariantly K-polystable.  
\end{cor}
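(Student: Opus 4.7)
The plan is to chain together two of the main results already stated in the introduction, so the corollary is obtained essentially for free. First, by Theorem \ref{Main geodesic Kps thm}, the existence of a cscK representative of the K\"ahler class $\alpha$ forces $(X,\alpha)$ to be geodesically K-polystable in the sense of Definition \ref{Definition geodesic Kps}. This is the analytic heart of the paper, and its proof proceeds via the equivalent characterizations of $\mathrm{DF}(\mathcal{X},\mathcal{A})=0$ given in Theorem \ref{Main theorem analytic product conditions}.

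Second, I would invoke Theorem \ref{Appendix intro 1} from Dervan's appendix, which asserts that geodesic K-polystability is a stronger property than equivariant K-polystability. Concatenating these two implications immediately yields the conclusion that a cscK manifold $(X,\alpha)$ is equivariantly K-polystable.

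Accordingly, there is no genuine new step to carry out at the level of this corollary; the substantive work is entirely upstream. The main obstacle lies in establishing Theorem \ref{Main geodesic Kps thm}, which requires the whole machinery of subgeodesic rays $L^{\infty}$-compatible with a possibly singular test configuration, the asymptotics of the Mabuchi K-energy along such rays (Theorem \ref{Theorem generalized thm C intro}), and the injectivity result Theorem \ref{Main lemma unique model intro}. The secondary ingredient is Theorem \ref{Appendix intro 1}, where the nontrivial point is to upgrade the geodesically-product condition (i.e.\ that the associated geodesic ray is generated by the flow of a real holomorphic Hamiltonian vector field) to a genuine $\mathbb{C}^{*}$-action on $X$ compatible with a chosen maximal torus in $\mathrm{Aut}(X)$, so as to fit the framework of Definition \ref{Appendix def equivariant stability}. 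Once both of these are in hand, the proof of the corollary reduces to the single line: apply Theorem \ref{Main geodesic Kps thm} and then Theorem \ref{Appendix intro 1}.
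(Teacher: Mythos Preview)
Your proposal is correct, and the paper itself explicitly acknowledges this route: immediately after proving Theorem \ref{Appendix geod implies equiv} (which is Theorem \ref{Appendix intro 1} of the introduction) the appendix remarks that, combined with Theorem \ref{Main geodesic Kps thm}, ``this gives another proof that cscK K\"ahler manifolds are equivariantly K-polystable.'' So your argument is exactly the alternative proof the paper flags.

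However, the proof the paper actually attaches to this corollary in the appendix is different and does not pass through geodesic K-polystability. It instead deduces the cscK case as a specialization of the stronger statement that extremal metrics imply \emph{relative} K-stability. The argument is: by \cite[Theorem 1.2]{Dervanrelative} an extremal metric forces $\DF_T(\mathcal{X},\mathcal{A},\lambda)\geq 0$ with equality iff $\|\proj(\mathcal{X},\mathcal{A},\lambda)\|_2=0$; the new Lemma \ref{lemma} (whose proof uses the injectivity result Theorem \ref{Main lemma unique model intro} together with the norm comparison from \cite{Dervanrelative}) then identifies this vanishing with $\mathcal{X}_0\cong X$; finally, in the cscK case the Futaki invariant vanishes, so relative K-stability upgrades to equivariant K-polystability. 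The payoff of the paper's route is that it simultaneously handles the extremal case (Theorem \ref{Appendix intro 2}), which your chain Theorem \ref{Main geodesic Kps thm} $\Rightarrow$ Theorem \ref{Appendix intro 1} does not reach.
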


\noindent Relying on Theorem \ref{Theorem equivalent characterizations discrete intro} combined with the injectivity result \ref{Main lemma unique model intro}, as well as an idea from \cite{cs} and \cite{Dervanrelative}, the above result can also be given a direct proof. These ideas can also be immediately applied also to the case of relative K-stability of extremal K\"ahler manifolds (see Definition \ref{Appendix def equivariant stability}), thus improving upon the main result of \cite{Dervanrelative}:   

\begin{thm} \label{Appendix intro 2}
If $(X,\alpha)$ admits an extremal K\"ahler metric, then it is relatively K-stable. 
\end{thm}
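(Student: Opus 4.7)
The plan is to adapt the strategy of Theorem \ref{Main geodesic Kps thm} and Theorem \ref{Appendix intro 1} to the $T$-equivariant, extremal setting. Fix a maximal torus $T \subset \mathrm{Aut}_0(X)$ whose Lie algebra $\mathfrak{t}$ contains the extremal vector field $V$ associated with the given extremal K\"ahler metric $\omega \in \alpha$, and restrict attention throughout to normal, relatively K\"ahler test configurations $(\mathcal{X},\mathcal{A})$ that are $T$-equivariant in the sense of \cite{Dervanrelative}, as is standard in relative K-stability.

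First, I would replace the Mabuchi K-energy $\mathrm{M}$ by its modified (or relative) version $\mathrm{M}_V$ whose critical points in the space of $T$-invariant K\"ahler potentials are exactly the extremal representatives of $\alpha$ -- so that $\omega$ is in particular a global minimizer and $\mathrm{M}_V$ is convex along $T$-invariant weak geodesics. Likewise, I would replace $\mathrm{DF}$ by the relative Donaldson-Futaki invariant $\mathrm{DF}^{\mathrm{rel}}$ obtained from $\mathrm{DF}(\mathcal{X},\mathcal{A})$ by subtracting the non-Archimedean inner product of $(\mathcal{X},\mathcal{A})$ with the canonical lift of $V$. Since $\mathrm{M}_V$ differs from $\mathrm{M}$ by a Futaki-type term expressible through the Deligne functional machinery of \cite{SD1}, the slope formula of Theorem \ref{Theorem generalized thm C intro} extends term by term to give
\[
\lim_{t \to +\infty} \frac{\mathrm{M}_V(\psi_t)}{t} \;=\; \mathrm{M}^{\mathrm{NA},\mathrm{rel}}(\mathcal{X},\mathcal{A})
\]
for any smooth ray $(\psi_t)_{t \geq 0}$ that is $C^{\infty}$-compatible with $(\mathcal{X},\mathcal{A})$.

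With these ingredients in place, I would run the equivariant analog of the proof of Theorem \ref{Main theorem analytic product conditions}. Convexity of $\mathrm{M}_V$ along the associated $T$-equivariant weak geodesic ray, combined with the minimization property of $\omega$, yields $\mathrm{DF}^{\mathrm{rel}}(\mathcal{X},\mathcal{A}) \geq 0$; and when equality holds the same equality-case argument (using the central fiber reducedness step, the $d_1$-estimate, and the Calabi-type characterization as in items (2)-(6) of Theorem \ref{Main theorem analytic product conditions}) forces the geodesic ray to consist of extremal potentials generated by the flow of a one-parameter subgroup $W \in \mathfrak{t}$ commuting with $V$. Finally, the injectivity result Theorem \ref{Main lemma unique model intro} identifies $(\mathcal{X},\mathcal{A})$, up to $\mathbb{C}^*$-equivariant isomorphism of total spaces, with the product test configuration induced by $W$, which is precisely the triviality statement required by the relative K-stability Definition \ref{Appendix def equivariant stability}.

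The main obstacle I expect is the careful bookkeeping of the Futaki correction term: one has to match the transcendental slope of the $V$-inner-product correction to the K-energy along $(\psi_t)$ with its non-Archimedean counterpart on $(\mathcal{X},\mathcal{A})$, and confirm that the equality-case rigidity that produced a holomorphic vector field in the cscK proof still outputs a vector field \emph{in the centralizer of $V$} in the extremal case. This requires extending the Deligne pairing computations of \cite{SD1} to $T$-equivariant data and verifying the continuity of the inner product in the singularity-type class defined in Section \ref{Subsection associated rays}; modulo this, the remaining steps are formal consequences of Theorems \ref{Main theorem analytic product conditions}, \ref{Main lemma unique model intro}, and the arguments of the appendix.
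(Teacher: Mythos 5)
There is a genuine gap here, and it sits exactly where you wave it off as ``bookkeeping''. Your plan transplants the cscK equality-case machinery to the modified K-energy $\mathrm{M}_V$, but the engine of that machinery in Theorem \ref{Main theorem analytic product conditions} is the $G$-coercivity of $\mathrm{M}$ from \cite{BDL}, which is what converts $\mathrm{M}(\varphi_t)\leq o(t)$ into the $d_{1,G}$-estimate feeding the compactness argument of Proposition \ref{Prop existence of V}. The extremal analogue — coercivity of $\mathrm{M}_V$ modulo the centralizer of the extremal field, for transcendental K\"ahler classes — is neither proved nor cited anywhere in this paper, and it is a substantial theorem in its own right; ``the minimization property of $\omega$'' alone does not give the $o(t)$ distance bound. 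Likewise, the slope formula you assert, matching the correction term of $\mathrm{M}_V$ with the inner products of Definition \ref{Appendix def equivariant stability} (which are defined via Hamiltonians on the central fiber), is not a formal consequence of Theorem \ref{Theorem generalized thm C intro}; it is precisely the equivariant extension you would have to construct.

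There is also a rationality problem. The orthogonal projection of $\lambda$ against a basis of $\mathfrak{t}$ is in general the flow of an \emph{irrational} vector field, so it carries no honest test configuration, and the vector field $W$ your compactness argument would output lies only in $\mathrm{isom}(X,\omega_{\varphi_0})$ and need not generate a $\mathbb{C}^*$-action; hence ``the product test configuration induced by $W$'' may not exist and the injectivity Theorem \ref{Main lemma unique model intro} cannot be invoked as you state. This is exactly the point the appendix is engineered to handle: the paper's proof simply quotes \cite[Theorem 1.2]{Dervanrelative} (existence of an extremal metric gives $\mathrm{DF}_T(\mathcal{X},\mathcal{A})\geq 0$, with equality iff the norm of the projected, possibly irrational, test configuration vanishes) and then only needs Lemma \ref{lemma}, proved by extending $\mathrm{J}^{\mathrm{NA}}$ and the $L^1$-norm continuously in the twisting parameter, using their Lipschitz equivalence from \cite{Dervanrelative}, and concluding $\mathcal{X}_0\cong X$ via the triviality/injectivity results. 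That route needs neither a relative coercivity theorem nor a relative slope formula; yours needs both, plus a treatment of irrational twists, before it can close.
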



\subsection{Organization of the paper}
In Section \ref{Section preliminaries} we recall preliminaries on energy functional asymptotics and the formalism of K-stability for K\"ahler manifolds with transcendental cohomology class that was introduced in \cite{SD1}. In particular, we give the definition of K-polystability for K\"ahler manifolds (Definition \ref{Def Kps}). 
In Section \ref{Section energy functional asymptotics} we study subgeodesic rays with singularity type prescribed by test configurations and give results for the asymptotics of energy functionals (including the K-energy) along such rays. We moreover prove that the equivalence class of subgeodesic rays associated to a given test configuration determines the total space up to $\mathbb{C}^*$-equivariant isomorphism. In particular, we prove Theorems \ref{Theorem generalized thm C intro} and \ref{Main lemma unique model intro}. 
In Section \ref{Section analytic characterization} we finally apply the above to yield analytic characterizations of test configurations with vanishing Donaldson-Futaki invariants. This leads to a natural notion of geodesic K-polystability, as discussed in Section \ref{Section geodesic kps}. Theorem \ref{Main theorem analytic product conditions} is proven in Section \ref{Section analytic characterization}. For the main Theorem \ref{Main geodesic Kps thm} and Proposition \ref{Prop equivalence}, see Section \ref{Section geodesic kps}. Theorems \ref{Appendix intro 1} and \ref{Appendix intro 2} are proven in the appendix.


\section{Preliminaries} \label{Section preliminaries}


\noindent Let $(X, \omega)$ be a compact K\"ahler manifold of complex dimension $n$ and denote by $\alpha := [\omega] \in H^{1,1}(X,\mathbb{R})$ the associated K\"ahler class. Denote by $\ric{\omega} = -dd^c\log \omega^n$ the Ricci curvature form, where $dd^c := \frac{i}{2\pi}\partial\bar{\partial}$ is normalised so that $\ric{\omega}$ represents the first Chern class $c_1(X) := c_1(-K_X)$. 
The scalar curvature of $\omega$ is then given by the trace
\begin{equation*}
S(\omega) := \mathrm{Tr}_{\omega} \mathrm{Ric}(\omega) =  n \frac{\ric{\omega} \wedge \omega^{n-1}}{\omega^n}
\end{equation*} 
of the Ricci curvature form. Note that $S(\omega)$ is nothing but a smooth function on $X$. If $S(\omega)$ is constant (equal to the average scalar curvature $\bar{\mathcal{S}}$) then $\omega$ is said to be a constant scalar curvature K\"ahler (cscK) metric. We then say that $(X,\omega)$ is a \emph{cscK manifold}. A K\"ahler class $\beta \in H^{1,1}(X,\mathbb{R})$ is said to \emph{admit a cscK representative} if there is a K\"ahler form $\eta$ such that $\beta = [\eta]$ and $\mathcal{S}(\eta) = \bar{\mathcal{S}}$. Note finally that 
the average scalar curvature is the cohomological constant given by
\begin{equation}
\bar{\mathcal{S}} := V^{-1} \int_X S(\omega) \; \omega^n =  n\frac{\int_X c_1(X) \cdot \alpha^{n-1}}{ \int_X \alpha^n} := n  \frac{(c_1(X) \cdot  \alpha^{n-1})_X}{( \alpha^n)_X},
\end{equation}
where $V := \int_X \omega^n := ( \alpha^n)_X  $ is the K\"ahler volume.

\medskip

\subsection{Energy functionals in K\"ahler geometry} \label{section energy functionals}

We here briefly recall the variational setup of \cite{BBGZ}. 
In particular, we fix our notation for quasi-plurisubharmonic functions and energy functionals in K\"ahler geometry. We also recall the definition of \emph{Deligne functionals} introduced in \cite{SD1}. 

\subsubsection{The space of K\"ahler metrics} \label{Section space} Let $\theta$ be a closed $(1,1)$-form on $X$ and denote, as usual, by $\mathrm{PSH}(X,\theta)$ the space of $\theta$-psh functions $\varphi$ on $X$, i.e. the set of functions that can be locally written as the sum of a smooth and a plurisubharmonic function, and such that $\theta_{\varphi} := \theta + dd^c\varphi \geq 0 $
in the weak sense of currents. In particular, if $\omega$ is our fixed K\"ahler form on $X$, then we write 
$$
\mathcal{H}_{\omega}  := \{ \varphi \in \mathcal{C}^{\infty}(X) : \omega_{\varphi} := \omega + dd^c\varphi > 0 \} \subset \mathrm{PSH}(X,\omega)
$$ 
for the space of smooth K\"ahler potentials on $X$. This space was first studied by Mabuchi in \cite{Mabuchisymplectic}. The space of K\"ahler metrics representing $\alpha$ is then given by
$
\mathcal{K} = \mathcal{H}/\mathbb{R},
$
since potentials are defined up to constants. It is an easy consequence of the definition that $\mathcal{H}$ is an \emph{open} and \emph{convex} subset of $C^{\infty}(X)$. Moreover, for each $\varphi \in C^{\infty}(X)$ we have $T_{\varphi}(\mathcal{H}) = C^{\infty}(X)$. 
As first introduced by Mabuchi \cite{Mabuchisymplectic} the spaces $\mathcal{H}$ and $\mathcal{K}$ can be endowed with a natural Riemannian $L^2$-metric that is often denoted $d_2$.  
Following Darvas \cite{DR}, \cite{Darvas, Darvas14, Darvas15} and others we however consider $\mathcal{H}$ as a path metric space endowed with a certain \emph{Finsler metric} $d_1$. To introduce it, let $d_1: \mathcal{H}_{\omega} \times \mathcal{H}_{\omega} \rightarrow \mathbb{R}_+$ be the path length pseudometric associated to the weak Finsler metric on $\mathcal{H}_{\omega}$ defined by
$$
||\xi||_{\varphi} := V^{-1} \int_X |\xi|\omega_{\varphi}^n, \; \; \; \xi \in T_{\varphi}\mathcal{H}_{\omega} = \mathcal{C}^{\infty}(X). 
$$
More explicitly, if $[0,1] \ni t \mapsto \phi_t$ is a smooth path in $X$, then let
$$
l_1(\phi_t):= \int_0^1 ||\dot{\phi}_t||_{\phi_t} dt
$$
be its length, and define 
$$
d_1(\varphi, \psi) = \inf \left\{ l_1(\phi_t), \; \; (\phi_t)_{0\leq t \leq 1} \subset \mathcal{H}_{\omega}, \; \phi_0 = \varphi, \; \phi_1 = \psi \right\},
$$
the infimum being taken over smooth paths $t \mapsto \phi_t$ as above. It can be seen (see \cite[Theorem 2]{Darvas}) that $(\mathcal{H}_{\omega}, d_1)$ is a metric space.  We refer to the survey article \cite{Darvassurvey} for a detailed treatment and background on this topic.

\subsubsection{Basic energy functionals} \label{multivariate energy functional}

We now introduce the notation for energy functionals that we will use. 
Now let $\varphi_i \in \mathrm{PSH}(X,\omega) \cap L^{\infty}_{\mathrm{loc}}$. 
If $\varphi_i \in \mathrm{PSH}(X,\theta) \cap L^{\infty}_{\mathrm{loc}}$, $1 \leq i \leq p \leq n$,  it follows from the work of Bedford-Taylor \cite{BT} that we can give meaning to the product $\bigwedge_{i = 1}^p (\theta + dd^c\varphi_i)$, which then defines a closed positive $(p,p)$-current on $X$. As usual, we then define the \emph{Monge-Amp\`ere measure} as the following probability measure, given by the top wedge product 
\begin{equation*}
\mathrm{MA}(\varphi) := V^{-1}(\omega + dd^c\varphi)^{n}.
\end{equation*} 
\noindent The \emph{Monge-Amp\`ere energy functional} (or \emph{Aubin-Mabuchi functional}) $\mathrm{E} := \mathrm{E}_{\omega}$ is then defined by 
\begin{equation*}
\mathrm{E}(\varphi) := \frac{1}{n+1}\sum_{j=0}^n V^{-1} \int_X \varphi (\omega + dd^c\varphi)^{n-j} \wedge \omega^j
\end{equation*}
(such that $\mathrm{E}' = \mathrm{MA}$ and $\mathrm{E}(0) = 0$). Similarily, if $\theta$ is any closed  $(1,1)$-form, we define a functional $\mathrm{E}^{\theta} := \mathrm{E}_{\omega}^{\theta}$ by
\begin{equation*}
\mathrm{E}^{\theta}(\varphi) :=  \sum_{j=0}^{n-1} V^{-1} \int_X \varphi (\omega + dd^c\varphi)^{n-j-1} \wedge \omega^j \wedge \theta,
\end{equation*}
and we will also have use for the Aubin $\mathrm{J}$-functional $\mathrm{J}: \mathrm{PSH}(X,\omega) \cap L^{\infty}_{\mathrm{loc}} \rightarrow \mathbb{R}_{\geq 0}$ defined by $$\mathrm{J}(\varphi) := V^{-1} \int_X \varphi \; \omega^n - \mathrm{E}(\varphi),$$ which essentially coincides with the \emph{minimum norm} of a test configuration (see \cite{Dervanuniform, BHJ1}). 
More generally, it is possible to define a natural \emph{multivariate} version of the Monge-Amp\`ere energy, of which all of the above functionals are special cases. 

\subsubsection{Deligne functionals} \label{Section Deligne functionals} Let $\theta_0, \dots, \theta_n$ be closed $(1,1)$-forms on $X$. Motivated by corresponding properties for the \emph{Deligne pairing} (cf. e.g. \cite{Berman}, \cite{Elkik} for background) we would like to consider a \emph{functional} $\langle \cdot, \dots, \cdot \rangle_{(\theta_0,\dots,\theta_n)}$ on the space $\mathrm{PSH}(X,\theta_0) \cap L^{\infty}_{\mathrm{loc}} \times \dots \times \mathrm{PSH}(X,\theta_n) \cap L^{\infty}_{\mathrm{loc}}$ 
that is
 \begin{itemize}
 \item symmetric, i.e. for any permutation $\sigma$ of the set $\{0,1, \dots,n\}$, we have
\begin{equation*}
\langle \varphi_{\sigma(0)}, \dots, \varphi_{\sigma(n)} \rangle_{(\theta_{\sigma(0)}, \dots, \theta_{\sigma(n)})} = \langle \varphi_0, \dots, \varphi_n \rangle_{(\theta_0,\dots,\theta_n)}. 
\end{equation*}  
 \item  if $\varphi_0'$ is another $\theta_i$-psh function in $\mathrm{PSH}(X,\theta) \cap L^{\infty}_{\mathrm{loc}}$, then we have a 'change of function' property
\begin{equation*}
\langle \varphi_0', \varphi_1 \dots, \varphi_n \rangle - \langle \varphi_0, \varphi_1 \dots, \varphi_n \rangle  = 
\end{equation*}
\begin{equation*}
 \int_X (\varphi_0' - \varphi_0) \; (\omega_1 + dd^c\varphi_1) \wedge \dots \wedge (\omega_n + dd^c\varphi_n). 
\end{equation*}
\end{itemize}  
Demanding that the above properties hold necessarily leads to the following definition of \emph{Deligne functionals}, that will provide a useful terminology for this paper: 

\begin{mydef} \label{Multivariate energy def} Let $\theta_0, \dots, \theta_n$ be closed $(1,1)$-forms on $X$. Define a \emph{multivariate energy functional} $\langle \cdot, \dots, \cdot \rangle_{(\theta_0,\dots,\theta_n)}$ on the space $\mathrm{PSH}(X,\theta_0) \cap L^{\infty}_{\mathrm{loc}} \times \dots \times \mathrm{PSH}(X,\theta_n) \cap L^{\infty}_{\mathrm{loc}}$ \emph{($n + 1$ times)} by 
\begin{equation*}
 \langle\varphi_0, \dots, \varphi_n \rangle_{(\theta_0,\dots,\theta_n)} :=  \int_X \varphi_0 \; (\theta_1 + dd^c\varphi_1) \wedge \dots \wedge (\theta_n + dd^c\varphi_n)
\end{equation*} 
\begin{equation*}
 + \int_X \varphi_1 \; \theta_0 \wedge (\theta_2 + dd^c\varphi_2) \wedge \dots \wedge (\theta + dd^c\varphi_n) +  \dots + \int_X \varphi_n \; \theta_0 \wedge \dots \wedge \theta_{n-1}.
\end{equation*}
\end{mydef} 


\begin{rem}The multivariate energy functional $\langle \cdot, \dots, \cdot \rangle_{(\theta_0,\dots,\theta_n)}$ can also be defined on $\mathcal{C}^{\infty}(X) \times \dots \times \mathcal{C}^{\infty}(X)$ by the same formula. 
\end{rem}

\begin{prop} \emph{(\cite[Proposition 2.3]{SD1})}
The functional $\langle \cdot, \dots, \cdot \rangle_{(\theta_0,\dots,\theta_n)}$ is symmetric. 
\end{prop}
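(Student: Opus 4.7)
The plan is to reduce the symmetry claim to invariance under a single adjacent transposition $(k, k+1)$, since such transpositions generate the symmetric group on $\{0, 1, \ldots, n\}$. Fix such an index $k$ and abbreviate $\omega_i := \theta_i + dd^c\varphi_i$.

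The key combinatorial observation is that in the sum of $n+1$ integrals defining $\langle \varphi_0, \ldots, \varphi_n \rangle_{(\theta_0, \ldots, \theta_n)}$, every summand with outer index $j \notin \{k, k+1\}$ already contains either both $\theta_k$ and $\theta_{k+1}$ (when $j > k+1$) or both $\omega_k$ and $\omega_{k+1}$ (when $j < k$) among its wedge factors. Since wedge products of $(1,1)$-currents commute, these $n-1$ summands are manifestly unchanged by the transposition.

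It therefore remains to compare the two summands $T_k$ and $T_{k+1}$ with their swapped counterparts. After factoring out the common $(n-1,n-1)$-current $\Omega := \theta_0 \wedge \cdots \wedge \theta_{k-1} \wedge \omega_{k+2} \wedge \cdots \wedge \omega_n$, a direct computation gives $T_k + T_{k+1} = \int_X (\varphi_k \, \omega_{k+1} + \varphi_{k+1} \, \theta_k) \wedge \Omega$, whereas the swapped sum equals $\int_X (\varphi_{k+1} \, \omega_k + \varphi_k \, \theta_{k+1}) \wedge \Omega$. Using $\omega_i - \theta_i = dd^c\varphi_i$, the desired equality collapses to the single identity
\[
\int_X \varphi_k \, dd^c \varphi_{k+1} \wedge \Omega \;=\; \int_X \varphi_{k+1} \, dd^c \varphi_k \wedge \Omega.
\]

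This last identity is the standard symmetry of mixed Monge-Amp\`ere-type integrals, obtained by integration by parts on the compact boundaryless manifold $X$: exploiting $d\Omega = 0$, Stokes' theorem gives $\int_X \varphi_k \, dd^c\varphi_{k+1} \wedge \Omega = -\int_X d\varphi_k \wedge d^c\varphi_{k+1} \wedge \Omega$ and symmetrically after swapping $k \leftrightarrow k+1$; the two right-hand sides then agree because the $(1,1)$-component of $d\varphi_k \wedge d^c\varphi_{k+1}$, namely $\tfrac{i}{2\pi}(\partial\varphi_k \wedge \bar\partial\varphi_{k+1} + \partial\varphi_{k+1} \wedge \bar\partial\varphi_k)$, is visibly symmetric in the two potentials. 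The only delicate point I would highlight is that the $\varphi_i$ are in general merely bounded $\theta_i$-plurisubharmonic rather than smooth, so these manipulations must be interpreted within Bedford-Taylor theory; the required integration-by-parts formula for bounded plurisubharmonic functions against closed currents of the relevant type is well established and poses no further obstacle.
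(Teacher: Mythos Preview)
Your argument is correct and is the standard proof of this symmetry. Note, however, that the present paper does not actually prove this proposition; it merely quotes it from \cite[Proposition 2.3]{SD1}, so there is no in-paper proof to compare against. That said, the proof in \cite{SD1} follows exactly the same strategy you outline: reduce to an adjacent transposition, observe that all but two summands are manifestly symmetric, and handle the remaining pair via the integration-by-parts identity $\int_X \varphi\, dd^c\psi \wedge \Omega = \int_X \psi\, dd^c\varphi \wedge \Omega$ for closed $\Omega$. Your caveat about justifying the Stokes step in the Bedford--Taylor framework when the $\varphi_i$ are merely locally bounded is well placed and is indeed the only point requiring care.
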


\begin{ex} \label{Example Deligne pairing} The functionals $\mathrm{E}$, $\mathrm{E}^{\theta}$ and $\mathrm{J}$ can be written using the above multivariate energy functional formalism. Indeed, if $\theta$ is a closed $(1,1)$-form on $X$, $\omega$ is a K\"ahler form on $X$ and $\varphi$ is an $\omega$-psh function on $X$, then
\begin{equation*}
\mathrm{E}(\varphi) = \frac{1}{n+1} V^{-1} \langle \varphi, \dots, \varphi \rangle_{(\omega, \dots, \omega)} \;, \; \; \mathrm{E}^{\theta}(\varphi) =  V^{-1} \langle 0,\varphi, \dots, \varphi \rangle_{(\theta, \omega, \dots, \omega)}
\end{equation*} 
and 
\begin{equation*}
\mathrm{J}(\varphi)  =  V^{-1} \langle \varphi,0, \dots, 0 \rangle_{(\omega, \dots, \omega)} - \mathrm{E}(\varphi).
\end{equation*}
Compare also \cite[Example 5.6]{Rubinsteinbook} on Bott-Chern forms. 
\end{ex}

\subsubsection{The Mabuchi K-energy functional}

Let $\omega$ be a K\"ahler form on $X$ and consider any smooth path $(\varphi_t)_{t \geq 0}$ in the space $\mathcal{H}$ of K\"ahler potentials on $X$. The \emph{Mabuchi functional} (or \emph{K-energy} functional) $\mathrm{M}: \mathcal{H} \rightarrow \mathbb{R}$, first introduced in \cite{MabuchiKenergy1, Mabuchi}, is defined by its Euler-Lagrange equation
\begin{equation*} 
\frac{d}{dt} \mathrm{M}(\varphi_t) = - V^{-1} \int_X \dot{\varphi}_t(\mathcal{S}(\omega_{\varphi_t}) - \bar{\mathcal{S}})\; \omega_{\varphi_t}^n.
\end{equation*} 
The Mabuchi functional is independent of the path chosen and its critical points are precisely the cscK metrics, when they exist. 

There is also an explicit formula (cf. \cite{Chen}) for the K-energy functional in terms of an "energy" and an "entropy" part, in the following way: 

\begin{equation} \label{Chens formula} 
{\mathrm{M}}(\varphi) = \left(\bar{\mathcal{S}} \mathrm{E}(\varphi) - \mathrm{E}^{\ric{\omega}}(\varphi)\right) + V^{-1} \int_X \log \left( \frac{\omega_{\varphi}^n}{\omega^n} \right) \omega_{\varphi}^n,
\end{equation}
Here the latter term is the \emph{relative entropy} of the probability measure $\mu := \omega_{\varphi}^n/V$ with respect to the reference measure $\mu_0 := \omega^n/V$. Recall that the entropy takes values in $[0, +\infty]$ and is finite if $\mu/\mu_0$ is bounded. It can be seen to be always lower semi-continuous (lsc) in $\mu$.

Following Chen \cite{Chen} (using the formula \eqref{Chens formula}) we will often work with the extension of the Mabuchi functional to the space of $\omega$-psh functions with bounded Laplacian. This is a natural setting to consider, since the space $(\mathcal{H},d_2)$  of K\"ahler potentials endowed with the Mabuchi metric is \emph{not} geodesically complete, while weak geodesic rays with bounded Laplacian are known to always exist, cf. \cite{Chen00, Blocki13, Darvas14, DL12} and \cite{Tosattigeodesicregularity}.  

\subsubsection{Group actions} \label{Section G-action} Let $G := \mathrm{Aut}_0(X)$ be the connected component of the complex Lie group of automorphisms of $X$. We denote its Lie algebra by $\mathfrak{aut}(X)$. It consists of infinitesimal automorphisms composed of real vector fields $V$ satisfying $\mathcal{L}_V J = 0$. We now recall the definition of the action of $G$ on the space $\mathcal{H}_0 := \mathcal{H} \cap E^{-1}(0)$ of normalized K\"ahler potentials, following \cite[Section 5.2]{DR} as a reference.  

First recall that $\mathcal{H}_0$ is in one-one correspondence with the set $\mathcal{H} := \{\omega_{\varphi}:= \omega + dd^c\varphi: \varphi \in \mathcal{C}^{\infty}(X), \omega_{\varphi} > 0 \}$. The group $G$ acts on $\mathcal{H}$ by pullback, i.e.
$$
g \cdot \xi := g^*\xi, \; \; \; \; g \in G, \; \xi \in \mathcal{H}.
$$
Due to the one-one correspondence with $\mathcal{H}$, the group $G$ also acts on potentials in $\mathcal{H}_0$, so that $g \cdot \varphi$ is the unique element in $\mathcal{H}_0$ satisfying $g \cdot \omega_{\varphi} = \omega_{g \cdot \varphi}$. As in \cite[Lemma 5.8]{DR}) one may show that
\begin{equation} \label{equation action}
g \cdot \varphi =  g \cdot 0 + \varphi \circ g. 
\end{equation}
For future use, we emphasize that the function $g \cdot 0$ is smooth (hence bounded) on $X$, as follows from the $dd^c$-lemma since $g^*\omega$ is always a K\"ahler form cohomologous to $\omega$.

\begin{prop}
Let $(X,\omega)$ be a given compact K\"ahler manifold. Then for any $g \in G$, the function $g \cdot 0$ is smooth on $X$. 
\end{prop}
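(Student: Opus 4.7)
The plan is to realize $g\cdot 0$ as a specific smooth $\omega$-psh normalization of a potential produced by the global $\partial\bar{\partial}$-lemma, then invoke uniqueness in the definition.

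First, I would show that $g^*\omega$ is cohomologous to $\omega$ in $H^{1,1}(X,\mathbb{R})$ for every $g\in G=\mathrm{Aut}_0(X)$. Since $G$ is connected, we can pick a smooth path $g_t$ in $G$ with $g_0=\mathrm{id}$ and $g_1=g$; then $t\mapsto [g_t^*\omega]$ is a continuous path in the discrete set of cohomology classes obtained from pullback (the induced action of $G$ on $H^2(X,\mathbb{R})$ factors through $\pi_0(G)$), so $[g^*\omega]=[\omega]$. In particular, $g^*\omega-\omega$ is a smooth real closed $d$-exact $(1,1)$-form on the compact K\"ahler manifold $X$.

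Second, I would apply the global $\partial\bar{\partial}$-lemma: there exists a smooth real function $u\in C^{\infty}(X)$ such that
\[
g^*\omega-\omega=dd^c u,
\]
i.e.\ $g^*\omega=\omega_u$. Since $g$ is a biholomorphism, $g^*\omega>0$ is a K\"ahler form, so $\omega_u>0$ and therefore $u\in\mathcal{H}$. Now set
\[
\varphi_g:=u-\mathrm{E}(u),
\]
so that $\omega_{\varphi_g}=\omega_u=g^*\omega=g\cdot\omega_0$, and $\mathrm{E}(\varphi_g)=\mathrm{E}(u)-\mathrm{E}(u)=0$ by the scaling property $\mathrm{E}(u+c)=\mathrm{E}(u)+c$ of the Monge--Amp\`ere energy. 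Hence $\varphi_g\in\mathcal{H}\cap\mathrm{E}^{-1}(0)=\mathcal{H}_0$ and $\varphi_g$ is manifestly smooth on $X$.

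Finally, the uniqueness clause in the definition of the $G$-action on $\mathcal{H}_0$ (the unique element $\psi\in\mathcal{H}_0$ with $\omega_\psi=g\cdot\omega_0$) forces $g\cdot 0=\varphi_g$, which is smooth. There is no real obstacle here: the whole content of the statement is that the $\mathcal{H}_0$-representative of $g^*\omega$ can be chosen smooth, which is exactly what the $\partial\bar{\partial}$-lemma supplies once one checks cohomological triviality along the connected component of the identity; the normalization step is routine. \qed
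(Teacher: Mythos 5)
Your argument is correct and is essentially the paper's own justification: the paper proves this precisely by observing that $g^*\omega$ is a K\"ahler form cohomologous to $\omega$ (which holds since $g$ lies in the connected group $G=\mathrm{Aut}_0(X)$) and invoking the $dd^c$-lemma, with the normalization to $\mathcal{H}_0$ being the routine shift by a constant, exactly as you do via $\mathrm{E}(u+c)=\mathrm{E}(u)+c$. The only cosmetic point is that the relevant fact is not that the set of pulled-back classes is discrete but, as your parenthetical correctly states, that the $G$-action on $H^{2}(X,\mathbb{R})$ is locally constant by homotopy invariance of pullback, hence trivial on the identity component.
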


\subsubsection{G-coercivity}
Following \cite[Definition 2.1]{ZhouZhu} and \cite[Definition 2.5]{TianEinsteinFano} we consider a version of Aubin's $\mathrm{J}$-functional defined on the orbit space $\mathcal{H}/G$ by
$$
\mathrm{J}_G(G\varphi) := \inf_{g \in G} \mathrm{J}(g.\varphi), \; \; \varphi \in \mathcal{H}_0.
$$
\begin{mydef}
We say that ${\mathrm{M}}$ is \emph{G-coercive} if there are constants $\delta, C > 0$ such that 
$$
{\mathrm{M}}(\varphi) \geq \delta \mathrm{J}_G(G\varphi) - C
$$ 
for all $\varphi \in \mathcal{H}_0$.
\end{mydef} 

\begin{thm}\emph{(\cite{BDL})} Suppose that $(X,\omega)$ is a K\"ahler manifold. If $\alpha := [\omega] \in H^{1,1}(X,\mathbb{R})$ admits a constant scalar curvature representative, then ${\mathrm{M}}$ is G-coercive.
\end{thm}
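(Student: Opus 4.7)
The plan is to prove this by contradiction, working in the $d_1$-metric completion of $(\mathcal{H}_0, d_1)$, namely the finite-energy space $\mathcal{E}^1_0 := \mathcal{E}^1 \cap \mathrm{E}^{-1}(0)$, on which the K-energy $\mathrm{M}$ extends to a $d_1$-lower semicontinuous, convex-along-finite-energy-geodesics functional (by Chen's formula and lsc of entropy). By Darvas-Rubinstein type estimates one has $\mathrm{J}_G(G\varphi) \asymp d_{1,G}(G0, G\varphi) := \inf_{g\in G} d_1(0, g\cdot\varphi)$, up to multiplicative and additive constants. So it suffices to prove that there exist $\delta, C > 0$ with $\mathrm{M}(\varphi) \geq \delta\, d_{1,G}(G0, G\varphi) - C$ for all $\varphi \in \mathcal{H}_0$.

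Assume towards a contradiction that no such constants exist. Then one can extract a sequence $\varphi_j \in \mathcal{H}_0$ with $T_j := d_{1,G}(G0, G\varphi_j) \to +\infty$ and $\mathrm{M}(\varphi_j) \leq \epsilon_j T_j$ with $\epsilon_j \to 0$. After replacing each $\varphi_j$ by $g_j\cdot\varphi_j$ for a suitable $g_j \in G$ (so that the infimum in the orbital distance is essentially attained), I may assume $d_1(0, \varphi_j) = T_j$, i.e.\ $\varphi_j$ realizes the orbital distance. Let $[0, T_j] \ni t \mapsto \varphi_j^t$ be the (finite-energy) weak geodesic from $0$ to $\varphi_j$ in $\mathcal{E}^1_0$, parametrized by $d_1$-arclength. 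By convexity of $\mathrm{M}$ along weak geodesics together with $\mathrm{M}(0) = 0$, one has $\mathrm{M}(\varphi_j^t)/t \leq \mathrm{M}(\varphi_j)/T_j \leq \epsilon_j \to 0$ for each fixed $t$.

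Invoking the BBEGZ/BDL compactness theorem for finite-energy potentials with uniformly bounded $\mathrm{M}$ and bounded $d_1$, I extract a diagonal subsequence along which $\varphi_j^t \to u^t$ in $(\mathcal{E}^1_0, d_1)$ for each rational $t$, and the limit assembles into a unit-speed $d_1$-geodesic ray $t \mapsto u^t$ emanating from $0$. Passing to the limit using $d_1$-lsc of $\mathrm{M}$ yields $\mathrm{M}(u^t) \leq 0$ along this ray. On the other hand, the existence of a cscK representative of $\alpha$ implies $\mathrm{M} \geq \mathrm{M}(\omega_{\mathrm{cscK}})$ is bounded below on $\mathcal{E}^1_0$, so convexity forces $\mathrm{M}$ to be constant along $(u^t)$.

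The final step — and the main obstacle — is the \emph{regularity of minimizers}: one must show that a $d_1$-unit-speed geodesic ray $(u^t)_{t \geq 0}$ along which $\mathrm{M}$ is constant (and equal to its minimum) consists of smooth cscK potentials, and that such a ray must be generated by the flow of a real-holomorphic Hamiltonian vector field, hence lies in a single $G$-orbit. Granting this, the ray $t \mapsto u^t$ is tangent to the orbit $G\cdot 0$ at $0$, which is incompatible with the normalization $d_1(0, \varphi_j) = d_{1,G}(G0, G\varphi_j)$: indeed, by moving $\varphi_j^t$ along the orbit one can strictly decrease $d_1(0, \cdot)$ for large $t$, contradicting that $\varphi_j$ realized the orbital distance. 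This last implication — that minimizers of $\mathrm{M}$ are smooth cscK and that flat directions of $\mathrm{M}$ are exactly $G$-directions — is the deep analytic input supplied by \cite{BDL}, building on Chen-Cheng's a priori estimates for the cscK equation.
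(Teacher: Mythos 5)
First, a point of comparison: the paper does not prove this statement at all --- it is imported as a black box from \cite{BDL} (``this deep result is a version of Tian's properness conjecture''), so what you have written is a sketch of the argument of \cite{BDL}/Darvas--Rubinstein rather than an alternative to anything in this paper. Your skeleton (contradiction, unit-speed geodesics, entropy compactness, $d_1$-lower semicontinuity of $\mathrm{M}$, and the regularity/uniqueness of weak minimizers as the deep input) is the right one, but there is a genuine gap in how you set it up: you base the geodesic segments at the reference potential $0$, which is an arbitrary point of $\mathcal{H}_0$, not a minimizer of $\mathrm{M}$. From $\mathrm{M}(u^t)\le 0=\mathrm{M}(u^0)$, convexity and the lower bound $\mathrm{M}\ge \mathrm{M}(\varphi_{\mathrm{cscK}})$ you conclude that $\mathrm{M}$ is constant along the limit ray ``and equal to its minimum''. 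That deduction is false: a convex function on $[0,\infty)$ bounded above by its value at $0$ and bounded below is merely non-increasing (it may decrease strictly from $0$ towards $\inf \mathrm{M}<0$). Since $\omega$ itself need not be cscK, $\mathrm{M}(0)=0$ is in general strictly larger than $\inf_{\mathcal{E}^1}\mathrm{M}$, so nothing in your argument forces the points $u^t$ to be minimizers, and the key theorem you want to invoke (weak minimizers of the extended K-energy are smooth cscK potentials forming a single $G$-orbit) simply does not apply to your ray. The repair is exactly what \cite{BDL} do, following the Darvas--Rubinstein properness principle: base the geodesics at a cscK potential $u_0$, i.e.\ at a \emph{minimizer} of the extended $\mathrm{M}$. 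Then convexity gives $\mathrm{M}(\varphi_j^t)\le \mathrm{M}(u_0)+\epsilon_j t$, and already at $t=1$ (no full ray is needed) compactness plus lower semicontinuity produce a limit $u^1$ with $\mathrm{M}(u^1)=\inf\mathrm{M}$, while the triangle-inequality computation you gesture at in your last step gives $d_1(g\cdot u_0,u^1)\ge 1$ for all $g\in G$; the regularity/uniqueness theorem then forces $u^1\in G\cdot u_0$, the desired contradiction.

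Two smaller points. The infimum defining $d_{1,G}$ need not be attained over the non-compact group $G$, so one should work with $\delta_j$-almost minimizing $g_j$ (harmless, and consistent with \cite[Lemma 5.11]{DR}, which is only an equivalence up to additive and multiplicative constants). Also, attributing the regularity of weak minimizers to ``Chen--Cheng's a priori estimates'' is anachronistic: in \cite{BDL} this is established independently (and earlier) by variational and pluripotential-theoretic methods; Chen--Cheng's estimates give a later, different proof. Neither of these affects the structure, but the base-point issue above is a real gap in the argument as written.
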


\noindent This deep result is a version of Tian's properness conjecture. 

\subsubsection{The $\mathrm{J}_G$-functional and the Finsler metric $d_{1,G}$} 

Now consider the path metric space $(\mathcal{H},d_1)$ (see Section \ref{Section space}). 
The action of $G$ on $\mathcal{H}_0$ is then a $d_1$-isometry \cite[Section 5]{DR}. It thus induces a pseudometric $d_{1,G}: \mathcal{H}/G \rightarrow \mathbb{R}_+$ on the orbit space, given by
$$
d_{1,G}(G\varphi,G\psi) := \inf_{f,g \in G} d_1(f\cdot \varphi, g \cdot \psi) 
$$
Note that $d_{1,G}(G0,G\varphi) = \inf_{g \in G} d_1(0,g \cdot \varphi)$. It is crucial for this paper to recall that the $\mathrm{J}_G$-functional is comparable to $d_{1,G}$. 
\begin{lem} \emph{(\cite[Lemma 5.11]{DR})} If $\varphi \in \mathcal{H}_0$, then 
$$
\frac{1}{C} \mathrm{J}_G (G\varphi) - C \leq d_{1,G}(G0,G\varphi) \leq C \mathrm{J}_G(G\varphi) + C, 
$$
for some $C > 0$. 
\end{lem}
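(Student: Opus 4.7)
The approach is to reduce the equivariant comparison to the (already known) non-equivariant one and then push it through the isometric $G$-action on $\mathcal{H}_0$; the only genuine content is the classical Darvas comparison between $\mathrm{J}$ and $d_1$, which we treat as a black box.

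First, recall the non-equivariant version: there is a constant $C_0 > 0$, depending only on $(X,\omega)$, such that for every $\psi \in \mathcal{H}_0$ one has
$$
\tfrac{1}{C_0} \mathrm{J}(\psi) - C_0 \;\leq\; d_1(0, \psi) \;\leq\; C_0 \, \mathrm{J}(\psi) + C_0.
$$
This comparison is the principal technical input; its proof relies on the explicit description of finite-energy $d_1$-geodesics, together with precise estimates relating $V^{-1}\int_X \psi\, \omega^n - \mathrm{E}(\psi)$ to $\int_X |\psi|\, \mathrm{MA}(\psi)$ and $\int_X |\psi|\,\omega^n$. All subsequent steps in our argument are formal manipulations of infima.

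Next, I would verify that the infimum in the definition of $d_{1,G}$ can be taken over a single copy of $G$. Since $G$ acts on $(\mathcal{H}_0, d_1)$ by isometries (see Section \ref{Section G-action}) and preserves $\mathcal{H}_0$ (by the defining relation $g \cdot \omega_\varphi = \omega_{g \cdot \varphi}$), for any $f, g \in G$
$$
d_1(f \cdot 0, g \cdot \varphi) = d_1(0, f^{-1}g \cdot \varphi),
$$
and as $f, g$ range independently over $G$ so does $h := f^{-1}g$. Therefore
$$
d_{1,G}(G0, G\varphi) = \inf_{h \in G} d_1(0, h \cdot \varphi),
$$
matching the form of $\mathrm{J}_G(G\varphi) = \inf_{g \in G} \mathrm{J}(g \cdot \varphi)$.

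Finally, I apply the Darvas comparison to each $g \cdot \varphi \in \mathcal{H}_0$ and take infima over $g \in G$. The upper bound of the lemma follows from
$$
d_{1,G}(G0, G\varphi) \;\leq\; \inf_{g \in G} \bigl[\, C_0\, \mathrm{J}(g \cdot \varphi) + C_0 \,\bigr] \;=\; C_0\, \mathrm{J}_G(G\varphi) + C_0,
$$
and the lower bound from
$$
\tfrac{1}{C_0} \mathrm{J}_G(G\varphi) - C_0 \;=\; \inf_{g \in G} \bigl[\, \tfrac{1}{C_0} \mathrm{J}(g \cdot \varphi) - C_0 \,\bigr] \;\leq\; \inf_{g \in G} d_1(0, g \cdot \varphi) \;=\; d_{1,G}(G0, G\varphi).
$$
Setting $C = C_0$ yields the statement. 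The main obstacle is entirely in the underlying Darvas comparison, which we assume; beyond that, the equivariant upgrade is purely formal, exploiting that the $G$-action preserves both $\mathcal{H}_0$ and $d_1$, and that taking an infimum commutes with constant-affine inequalities.
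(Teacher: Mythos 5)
Your proof is correct and is essentially the argument behind the statement as the paper presents it: the paper does not prove this lemma but cites it as \cite[Lemma 5.11]{DR}, whose proof is exactly your reduction — the non-equivariant comparison between $\mathrm{J}$ and $d_1$ on $\mathcal{H}_0$ (the paper's pointer to \cite[Proposition 5.5]{DR}) combined with the fact that $G$ acts by $d_1$-isometries preserving $\mathcal{H}_0$, so that $d_{1,G}(G0,G\varphi)=\inf_{g\in G} d_1(0,g\cdot\varphi)$ and the infima can be taken termwise. No gaps; the only nontrivial input is the Darvas-type comparison, which you correctly identify and treat as the black box.
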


\noindent See also \cite[Proposition 5.5]{DR}. For simplicity of notation we will in the sequel write simply $J_G(\varphi)$ instead of $J_G(G\varphi)$, and similarily for $d_{1,G}$.

\medskip

\subsection{Background on holomorphic vector fields, flows and the Futaki invariant} \label{Section holomorphic vector fields and projection} 

Let $(X,\omega)$  be a compact K\"ahler manifold and denote by $J: TX \rightarrow TX$ the associated complex structure. We take here the viewpoint of considering \emph{real} holomorphic vector fields on $X$. We denote the real tangent bundle of $X$ by $TX$. By a real vector field we mean a section of $TX$. On the other hand we may consider $TX^{1,0}$, i.e. the eigensubbundle of $TX \otimes \mathbb{C}$ corresponding to the eigenvalue $i$ of the extension of $J: TX \rightarrow TX$ to $TX \otimes \mathbb{C}$. We then have an identification as follows: 
\begin{equation} \label{eq identification}
TX \longrightarrow TX^{1,0}
\end{equation}
$$
V \mapsto V^{\mathbb{C}} := \frac{1}{2}(V - iJV).
$$ 


\begin{mydef}
A real vector field  $V$ on $X$ is said to be \emph{real holomorphic} if the flow preserves the complex structure, i.e. it has vanishing Lie derivative $L_VJ = 0$. Equivalently, a real vector field $V$ on $X$ is \emph{holomorphic} if the corresponding $(1,0)$-field $V^{\mathbb{C}}$ is a holomorphic section of the bundle $T^{1,0}X$. 
\end{mydef}

\noindent Recall that a holomorphic vector field on a \emph{compact} manifold is automatically $\mathbb{C}$-complete,
and then its \emph{flow} $\phi_t$ is an action of $(\mathbb{C},+)$ on $X$ by holomorphic automorphisms. Conversely, one may associate to every additive action $\phi: \mathbb{C} \times X \rightarrow X$ by holomorphic automorphisms on $X$ the vector field 
$$
V_{\phi}(x) := \frac{d}{dt}  \phi(t,x)_{\vert t = 0},
$$
called the \emph{inifinitesimal generator} of $X$. It is holomorphic and $\mathbb{C}$-complete on $X$, with the flow $\phi$. In the sequel we privilege the point of view of working with real holomorphic vector fields $V$, keeping in mind the identification \eqref{eq identification}.

\begin{mydef}
A real holomorphic vector field $V$ on $X$ is said to be \emph{Hamiltonian} if it admits a \emph{Hamiltonian potential} $h_{\omega}^{V} \in C^{\infty}(X,\mathbb{R})$ such that the contraction 
$$
i_{V}(\omega) := V \rfloor \omega = \sqrt{-1} \bar{\partial} h_{\omega}^V .
$$
\end{mydef}

\begin{rem}
Equivalently, a real holomorphic vector field admits a Hamiltonian potential if and only if it has a zero somewhere, see LeBrun-Simanca \cite{LeBrunSimanca}. 
\end{rem}  

\noindent The Hamiltonian potential is unique up to constants. To relieve this ambiguity we impose the normalization
$$
\int_X h_{\omega}^V \omega^n = 0.
$$

\begin{rem} For the purpose of comparing with the situation for polarized manifolds $(X,L)$ it is interesting to recall that Hamiltonian vector fields are precisely those that lift to line bundles, see \cite[Lemma 12]{Donaldsontoric}. This is a key property used e.g. in the proof \cite{BDL} of K-polystability for polarized cscK manifolds, due to R. Berman, T. Darvas. C. Lu. 
\end{rem}

\noindent 
Note that a real holomorphic Hamiltonian vector field is automatically a \emph{Killing field}, since $L_V J = L_V \omega = 0$ implies that also $L_V g = 0$ for the Riemannian metric associated to the K\"ahler form $\omega$. We further recall that since $X$ is compact it follows that the Lie algebra of real holomorphic vector fields on $X$ is finite dimensional. It is the (complex) Lie algebra of the complex Lie group $\mathrm{Aut}(X)$. We refer to e.g the expository notes \cite{Gabornotes} for details and further references.

We further recall the definition of the Futaki invariant, originally introduced in \cite{Futaki}. 

\begin{mydef}
Let $(X,\omega)$ be a compact K\"ahler manifold with $\alpha := [\omega] \in H^{1,1}(X,\mathbb{R})$ the corresponding K\"ahler class. Let $\mathfrak{h}$ be the Lie algebra of real holomorphic Hamiltonian vector fields on $X$. The Futaki invariant is the Lie algebra character $\mathrm{Fut}_{\alpha}: \mathfrak{h} \rightarrow \mathbb{R}$ defined by 
$$
\mathrm{Fut}_{\alpha}(V) := \int_X h_{\omega}^V(S(\omega) - \bar{S}) \; \omega^n,
$$
where $V$ be a real holomorphic Hamiltonian vector field on $X$ and $h_{\omega}^V \in C^{\infty}(X,\mathbb{R})$ is the associated Hamiltonian.
\end{mydef}

\noindent Note that the Futaki invariant is independent of the K\"ahler class considered: 

\begin{thm} \emph{(\cite{Futaki})} 
The above character is independent of representative of the K\"ahler class $\alpha := [\omega]$. In particular, if $[\omega]$ admits a cscK metric, then $\mathrm{Fut}_{\alpha}(V) = 0$ for each $V \in \mathfrak{h}$. \end{thm}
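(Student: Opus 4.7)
The second assertion is an immediate consequence of the first: once one knows that $\mathrm{Fut}_{\alpha}(V)$ depends only on the class $\alpha$, one may evaluate it on a cscK representative $\omega$, whereupon $\mathcal{S}(\omega)-\bar{\mathcal{S}}\equiv 0$ and the integral vanishes. So the whole content lies in proving that $\int_X h_{\omega}^V(\mathcal{S}(\omega)-\bar{\mathcal{S}})\,\omega^n$ is invariant under changing the Kähler representative of $\alpha$.

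My plan is the classical deformation argument. Given two Kähler forms $\omega_0,\omega_1\in\alpha$, the $dd^c$-lemma yields a smooth real $\varphi$ with $\omega_1-\omega_0=dd^c\varphi$; the linear path $\omega_t:=\omega_0+t\,dd^c\varphi$ remains Kähler on an open neighbourhood of $[0,1]$ (and one can always decompose the path into small pieces if necessary). I would then show that the function $t\mapsto\mathrm{Fut}_{[\omega_t]}(V)$ has vanishing derivative. For this I need two variational formulas. First, differentiating the Hamiltonian defining relation $i_V\omega_t=\sqrt{-1}\,\bar\partial h_t$ and using Cartan's formula together with $\mathcal{L}_VJ=0$ (which implies $[\mathcal{L}_V,\bar\partial]=0$ on the holomorphic part), one obtains
\[
\dot h_t \;=\; V^{1,0}(\varphi)+c(t),
\]
where $c(t)$ is a purely normalizing constant fixed by $\int_X h_t\,\omega_t^n=0$. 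Second, the standard variational formula for scalar curvature along a Kähler deformation reads
\[
\dot{\mathcal{S}}(\omega_t)\;=\;-\mathcal{D}_t^{\ast}\mathcal{D}_t\varphi+\bigl\langle\mathrm{Ric}(\omega_t)-\tfrac{1}{n}\mathcal{S}(\omega_t)\omega_t,\,i\partial\bar\partial\varphi\bigr\rangle_{\omega_t},
\]
where $\mathcal{D}_t$ is the Lichnerowicz operator of $\omega_t$, whose kernel on real functions is exactly the space of Hamiltonian potentials of real holomorphic vector fields.

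Differentiating $\mathrm{Fut}_{[\omega_t]}(V)=\int_X h_t(\mathcal{S}(\omega_t)-\bar{\mathcal{S}})\,\omega_t^n$ and inserting these formulas, the key step is integration by parts: the term $\int_X h_t\cdot\mathcal{D}_t^{\ast}\mathcal{D}_t\varphi\,\omega_t^n$ becomes $\int_X (\mathcal{D}_th_t)\cdot(\mathcal{D}_t\varphi)\,\omega_t^n$, which vanishes identically because $V$ is real holomorphic, hence $\mathcal{D}_th_t=0$. The contributions from $\dot h_t$ and from the volume form variation recombine, via the divergence theorem and the fact that $V$ is Killing, into exact terms that integrate to zero, together with a constant contribution coming from $c(t)$ which pairs with $\int_X(\mathcal{S}-\bar{\mathcal{S}})\omega_t^n=0$. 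After cancellation one finds $\tfrac{d}{dt}\mathrm{Fut}_{[\omega_t]}(V)=0$, which yields the invariance and hence the theorem.

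The main obstacle is the integration-by-parts bookkeeping in Step~3: one has to ensure that all boundary/total-derivative contributions truly cancel and that the normalizing constant $c(t)$ does not leak into the final answer. Conceptually this is clean—it ultimately reduces to the identity $\mathcal{D}h_{\omega}^V=0$ characterizing holomorphic Hamiltonian potentials—but in practice the bookkeeping with the $dd^c$-normalization and with the non-constant factor $\omega_t^n$ in the volume requires care. An alternative, arguably cleaner, route would use the Deligne/multivariate energy formalism of Section~\ref{Section Deligne functionals} to express the Futaki invariant as the derivative along a one-parameter subgroup of a Deligne functional on $\alpha$, making the independence of representative manifest from the cohomological definition of Deligne pairings; but the direct calculation above is the one I would carry out.
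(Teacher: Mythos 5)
The paper itself does not prove this theorem: it is imported verbatim from Futaki's work, so there is no internal argument to compare against. Your plan is the classical deformation proof, and its skeleton is the right one: reduce to showing $\frac{d}{dt}\int_X h_t(\mathcal{S}(\omega_t)-\bar{\mathcal{S}})\,\omega_t^n=0$ along $\omega_t=\omega_0+t\,dd^c\varphi$, use $\dot h_t=V^{1,0}(\varphi)+c(t)$, kill the fourth-order term by integrating by parts against $\mathcal{D}_t h_t=0$, and note that $c(t)$ pairs with $\int_X(\mathcal{S}(\omega_t)-\bar{\mathcal{S}})\,\omega_t^n=0$; the second assertion is indeed immediate once invariance is known.

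Two points need repair before the bookkeeping in your key step actually closes. First, the linearization of the scalar curvature you quote is not the K\"ahler one: the correct formula is $\dot{\mathcal{S}}(\omega_t)=-\Delta_t^2\varphi-\langle\mathrm{Ric}(\omega_t),i\partial\bar\partial\varphi\rangle_{\omega_t}=-\mathcal{D}_t^*\mathcal{D}_t\varphi+g_t^{j\bar k}\partial_j\mathcal{S}\,\partial_{\bar k}\varphi$, i.e. the extra term is a gradient pairing $\langle\nabla\mathcal{S},\nabla\varphi\rangle$, not a pairing of $i\partial\bar\partial\varphi$ with the trace-free Ricci form; it is precisely this gradient term that recombines with $\int_X V^{1,0}(\varphi)(\mathcal{S}-\bar{\mathcal{S}})\,\omega_t^n$ and with the volume variation $n\,dd^c\varphi\wedge\omega_t^{n-1}=(\Delta_t\varphi)\,\omega_t^n$ into an exact divergence, and with the formula as you wrote it the cancellation does not come out. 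Second, along the path $V$ has no reason to be Killing with respect to $\omega_t$, so "the fact that $V$ is Killing" cannot be invoked at intermediate times; either run the computation with (possibly complex-valued) holomorphy potentials $\theta_t$ defined by $V^{1,0}=\nabla_t^{1,0}\theta_t$, for which $\mathcal{D}_t\theta_t=0$ still holds and which reduce to the real normalized Hamiltonians at the endpoints, or first average so as to work only with representatives invariant under the compact torus generated by $V$. With these standard corrections your argument is the usual proof. The alternative you mention is also viable and is closer in spirit to what this paper actually uses: $\mathrm{Fut}_\alpha(V)$ appears as the slope of the Mabuchi functional along the ray generated by the flow of $JV$ (cf. Lemma \ref{Lemma analytic product condition} and Proposition \ref{Prop Mabuchi constant}), and invariance then follows from the path-independence and cocycle property of the K-energy.
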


\begin{rem} There are several other common situations in which the Futaki character $\mathrm{Fut}_{\alpha}$ vanishes identically: In particular, it suffices that $(X,\alpha)$ K-semistable, as explained in the proof of Proposition \ref{Prop product vanish}. More generally, if the connected Lie group $\mathrm{Aut}_0(X)$ of automorphisms of $X$ is semisimple, then $\mathrm{Fut}_{\alpha}(V) = 0$ for each $V \in \mathfrak{h}$ and for \emph{each} K\"ahler class $\alpha$ on $X$, see \cite{LeBrunSimanca}. 
\end{rem}

\medskip

\subsection{K-polystability for K\"ahler manifolds}

In this section we recall the transcendental K-stability notions introduced in \cite{SD1}. In particular, we define K-polystability for K\"ahler manifolds and compare it to the classical algebraic notion for polarized manifolds. 

\subsubsection{Complex spaces and K\"ahler geometry}
In order to define test configurations for K\"ahler manifolds we will be concerned with 
singular (normal) complex analytic spaces, that we shall write in \emph{cursive}, such as $\mathcal{X}$ or $\mathcal{Y}$. If $\mathcal{X}$ is a complex space we denote respectively by $\mathcal{X}_{\mathrm{reg}}$ and $\mathcal{X}_{\mathrm{sing}}$ the regular and singular locus of $\mathcal{X}$. 
In this paper we restrict attention to \emph{normal} complex spaces. We refer to \cite{Demaillycomplexspaces} or \cite{Demaillybook} for a detailed presentation of these concepts. 

The notions of K\"ahler form and plurisubharmonic (psh) function can be defined on normal complex spaces. We briefly recall the definitions: Let $j: \mathcal{X} \rightarrow \Omega$ be an embedding of $\mathcal{X}$ into an open subset $\Omega \subset \mathbb{C}^N$. A $(p,q)$-form on $\mathcal{X}$ is then defined as the image of the restriction map 
$$
j^*: \mathcal{A}^{p,q}(\Omega) \rightarrow \mathcal{A}^{p,q}(\mathcal{X}). 
$$
It can be checked that this notion is well-defined (see \cite{Demaillycomplexspaces}). Indeed, if $j': \mathcal{X} \rightarrow \Omega' \subset \mathbb{C}^{N'}$ is another embedding then there exist (locally) holomorphic functions $f: \Omega \rightarrow \mathbb{C}^{N'}$ and $g: \Omega' \rightarrow \mathbb{C}^N$ so that $j' = fj$ and $j = gj'$, and it can then be seen that the respective images of $j^*$ and $j'^*$ coincide. In particular, a form is smooth if it is given locally by restriction of a smooth form under an embedding of $\mathcal{X}$ into an open subset of $\mathbb{C}^N$. Likewise, we say that a  smooth $(1,1)$-form $\omega$ is a \emph{K\"ahler form} on $\mathcal{X}$ if it is locally the restriction of a smooth K\"ahler form under an embedding of $\mathcal{X}$ as above. 
The notion of currents can be defined by means of duality as in the smooth case, 
see \cite{Demaillycomplexspaces}. 

Moreover, a function $f: \mathcal{X} \rightarrow [-\infty, +\infty[$ on the normal analytic space $\mathcal{X}$ is said to be \emph{plurisubharmonic}\footnote{There are several notions of plurisubharmonicity on complex spaces. They all coincide in case the space is locally irreducible. } (psh)  if it is upper semi-continuous (usc) on $\mathcal{X}$, not locally $-\infty$, and extends to a psh function in some local embedding of $\mathcal{X}$ into $\mathbb{C}^N$. 
A continuous function is psh iff its restriction to $\mathcal{X}_{\mathrm{reg}}$ is so \cite{FN80}. Further recall that bounded
psh functions on $\mathcal{X}_{\mathrm{reg}}$ extends to $\mathcal{X}$. We can use the notion of psh functions in order to view K\"ahler metrics (and currents) as equivalence classes of potentials as in \cite[p. 413]{GZ}: A K\"ahler potential on $\mathcal{X}$ is a family $(U_i,\varphi_i)_{i \in I}$ where $(U_i)_{i \in I}$ is an open covering of $\mathcal{X}$ and $\varphi_i$ a $C^{\infty}$-smooth strictly psh function such that $\varphi_i - \varphi_j$ is pluriharmonic on $U_i \cap U_j$. We say that two potentials $(U_i, \varphi_i)$ and $(V_j,\psi_j)$ are \emph{equivalent} iff $\varphi_i - \psi_j$ is pluriharmonic on $U_i \cap V_j$. A \emph{K\"ahler metric} is an equivalence class of K\"ahler potentials. A \emph{positive current} on $\mathcal{X}$ is an equivalence class of plursubharmonic potentials (defined analogously to K\"ahler potentials, in the obvious way). 
If a positive current $T$ has locally bounded potentials, it is fully
determined by the closed $(1,1)$-current $T_{\mathrm{reg}}$ on $\mathcal{X}_{\mathrm{reg}}$ defined on $U_i$ by
$T_{\mathrm{reg}} = dd^c\varphi_i$.
The notion of quasi-psh function is defined as in the smooth case: Let $\Omega$ be a K\"ahler metric on $\mathcal{X}$ with potential $(U_i, \varphi_i)$. An upper semi-continuous function $\varphi: \mathcal{X} \rightarrow [-\infty, +\infty[$ is $\Omega$-psh iff $\varphi_i + \varphi$ is psh on $U_i$, for all $i$. We denote by $\Omega + dd^c\varphi$ the positive $(1,1)$-current whose potential is $(U_i, \varphi_i + \varphi)$. 

We also recall that the Monge-Amp\`ere operator $(dd^c\psi)^n$ can be defined on a complex space, via e.g. Bedford-Taylor's study of Monge-Amp\`ere measures for locally bounded psh functions. The definition uses the fact that these measures do not charge proper analytic subsets, as explained in e.g. \cite[Proposition 16.42]{GZ}. 

\subsubsection{Cohomological test configurations} \label{section test config} 
We now recall the definition of \emph{cohomological test configurations} for K\"ahler manifolds.
 As a reference for this section we use \cite[Section 3]{SD1}. 

\begin{mydef} A \emph{cohomological test configuration $(\mathcal{X}, \mathcal{A}) := (\mathcal{X}, \mathcal{A}, \pi, \rho)$ for $(X,\alpha)$} consists of  
\begin{itemize}
\item a normal complex space $\mathcal{X}$, compact and K\"ahler, with a flat 
morphism $\pi: \mathcal{X} \rightarrow \mathbb{P}^1$
\item a $\mathbb{C}^*$-action $\rho$ on $\mathcal{X}$ lifting the canonical action on $\mathbb{P}^1$
\item a $\mathbb{C}^*$-equivariant isomorphism 
\begin{equation} \label{equiviso}
\mathcal{X} \setminus \pi^{-1}(0) \simeq X \times (\mathbb{P}^1 \setminus \{0\})
\end{equation}
\item 
a $\mathbb{C}^*$-invariant $(1,1)-$Bott-Chern cohomology class $\mathcal{A} \in  H^{1,1}_{\mathrm{BC}}(\mathcal{X},\mathbb{R})^{\mathbb{C}^*}$ whose image under \eqref{equiviso}
is $p_1^*\alpha$. 
\end{itemize}
\end{mydef}

\begin{rem}
Note that since $\pi$ is flat the central fiber $\mathcal{X}_0 := \pi^{-1}(0)$ is a Cartier divisor, so $\mathcal{X} \setminus \mathcal{X}_0$ is dense in $\mathcal{X}$ in Zariski topology. 
\end{rem}

\noindent The \emph{trivial} test configuration for $(X,\alpha)$ is given by $(\mathcal{X} := X \times \mathbb{P}^1, p_1^*\alpha, \lambda_{\mathrm{triv}}, p_2)$, where $p_1: X \times \mathbb{P}^1 \rightarrow \mathbb{P}^1$ and $p_2: X \times \mathbb{P}^1 \rightarrow \mathbb{P}^1$ are the projections on the $1^{\mathrm{st}}$ and $2^{\mathrm{nd}}$ factor respectively, and $\lambda_{\mathrm{triv}}: \mathbb{C}^* \times \mathcal{X} \rightarrow \mathcal{X}$, $(\tau, (x,z)) \mapsto (x, \tau z)$ is the $\mathbb{C}^*$-action that acts trivially on the first factor. If we instead let $\sigma: \mathbb{C}^* \times X \rightarrow X$ be any $\mathbb{C}^*$-action on $X$, then we obtain an induced test configuration as above with $\lambda(\tau, (x,z)) := (\sigma(\tau,x), \tau z)$ (by also taking the compactification so that the fiber at inifinity is trivial). Such test configurations are called \emph{product} test configurations of $(X,\alpha)$.

In either case, we identify $X$ with $X \times \{ 1 \}$ and the canonical equivariant isomorphism \eqref{equiviso} is then explicitly induced by the isomorphisms $X \cong X \times \{1\} \rightarrow X \times \{ \tau \}$ given by $x \mapsto \lambda(\tau, (x,1)) =: \lambda(\tau) \cdot x$. 

\begin{mydef} \label{Definition smooth and dom} A cohomological test configuration $(\mathcal{X},\mathcal{A})$ is said to be smooth (resp. dominating) if $\mathcal{X}$ is smooth (resp. $\mathcal{X}$ dominates $X \times \mathbb{P}^1$ via a morphism $\mu: \mathcal{X} \rightarrow X \times \mathbb{P}^1$. It is said to be \emph{relatively K\"ahler} if $\mathcal{A}$ is a relatively K\"ahler class, i.e. if there is a K\"ahler form $\beta \in H^{1,1}(\mathbb{P}^1)$ such that $\mathcal{A} + \pi^*\beta$ is K\"ahler on $\mathcal{X}$. 
A \emph{pull-back} of $(\mathcal{X},\mathcal{A}, \lambda, \pi)$ via a morphism $\mu: \hat{\mathcal{X}} \rightarrow \mathcal{X}$ is any test configuration $(\hat{\mathcal{X}}, \mu^*\mathcal{A}, \hat{\lambda}, \pi \circ \mu)$, where the $\mathbb{C}^*$-action $\hat{\lambda}$ may be taken arbitrarily (but so that the data still defines a test configuration). Hence, we do not demand any uniqueness here. 
\end{mydef}

\noindent We give a few remarks and examples on how to compare cohomological test configurations with \emph{algebraic test configurations $(\mathcal{X},\mathcal{L})$ for a polarized manifold $(X,L)$}. The latter refers to the well-known concept first introduced in \cite{Donaldsontoric}. 
\begin{enumerate}
\item If $(X,L)$ is any compact K\"ahler manifold endowed with an ample line bundle $L$ (so $X$ is projective), and $(\mathcal{X}, \mathcal{L})$ is a test configuration for $(X,L)$ in the usual algebraic sense, cf. e.g. \cite{Donaldsontoric}, then $(\mathcal{X}, c_1(\mathcal{L}))$ is a cohomological test configuration for $(X, c_1(L))$. This observation is useful, since many examples of algebraic test configurations $(\mathcal{X}, \mathcal{L})$ for polarized manifolds $(X,L)$ are known, see e.g. \cite{Gabornotes, Tianlecturenotes} and references therein.

\item There are more cohomological test configurations for $(X, c_1(L))$ than there are algebraic test configurations for $(X,L)$ (take for instance $(\mathcal{X}, \mathcal{A})$ with $\mathcal{A}$ a transcendental class as in the above definition), but in some cases the ensuing stability notions can nonetheless be seen to be equivalent (see \cite[Section 3]{SD1}).
\end{enumerate}

\subsubsection{The Donaldson-Futaki invariant and the non-Archimedean Mabuchi functional}

\begin{mydef} \emph{To any cohomological test configuration $(\mathcal{X},\mathcal{A})$ for $(X,\alpha)$ we may associate its \emph{Donaldson-Futaki invariant} $\mathrm{DF}(\mathcal{X},\mathcal{A})$ and its \emph{non-Archimedean Mabuchi functional} ${{\mathrm{M}}^{\mathrm{NA}}}(\mathcal{X},\mathcal{A})$, first introduced in \cite{BHJ1}. They are given respectively by the following intersection numbers
\begin{equation*}
\mathrm{DF}(\mathcal{X},\mathcal{A}) := \frac{\bar{\mathcal{S}}}{n+1}V^{-1} (\mathcal{A}^{n+1})_{\hat{\mathcal{X}}} + V^{-1}(K_{\mathcal{X}/\mathbb{P}^1} \cdot \mathcal{A}^n)_{\hat{\mathcal{X}}}
\end{equation*}
and
$$
{{\mathrm{M}}^{\mathrm{NA}}}(\mathcal{X},\mathcal{A}) := \mathrm{DF}(\mathcal{X},\mathcal{A}) + ((\mathcal{X}_{0,\mathrm{red}} - \mathcal{X}_0) \cdot \mathcal{A}^n)_{\hat{\mathcal{X}}}
$$
computed on any smooth and dominating model $\tilde{\mathcal{X}}$ of $\mathcal{X}$ (due to the projection formula it does not matter which one). 
Note that $\mathrm{DF}(\mathcal{X},\mathcal{A}) \geq {{\mathrm{M}}^{\mathrm{NA}}}(\mathcal{X},\mathcal{A})$ with equality precisely when $\mathcal{X}_0$ is reduced. }
\end{mydef}

\noindent In case $\mathcal{X}$ is smooth, $K_{\mathcal{X}/\mathbb{P}^1} := K_{\mathcal{X}} - \pi^*K_{\mathbb{P}^1}$ denotes the relative canonical class taken with respect to the flat morphism $\pi:\mathcal{X} \rightarrow \mathbb{P}^1$. In the general case of a normal (possibly singular) test configuration $\mathcal{X}$ for $X$, we need to give meaning to the intersection number $K_{\mathcal{X}} \cdot \mathcal{A}_1 \cdot \dots \cdot \mathcal{A}_n$, for $\mathcal{A}_i \in H^{1,1}_{\mathrm{BC}}(\mathcal{X},\mathbb{R})$. To do this, suppose that $\tilde{\mathcal{X}}$ is a smooth model for $\mathcal{X}$, with $\pi': \tilde{\mathcal{X}} \rightarrow \mathcal{X}$ the associated morphism. Since $\tilde{\mathcal{X}}$ is smooth the canonical class $K_{\tilde{\mathcal{X}}} := \omega_{\tilde{\mathcal{X}}}$ is a line bundle. Now 
consider 
$
\omega_{\mathcal{X}} := \mathcal{O}(K_{\mathcal{X}}) := (\pi'_*\omega_{\tilde{\mathcal{X}}})^{**},
$
i.e. the "reflexive extension" of $\omega_{\tilde{\mathcal{X}}}$, which is a rank $1$ reflexive sheaf on $\mathcal{X}$. Having been unable to find a suitable reference for an intersection theory for reflexive sheaves, we here content ourselves with an "ad hoc" definition of the concerned intersection number, setting 
\begin{equation}
(\omega_{\mathcal{X}} \cdot \mathcal{A}_1 \cdot \dots \cdot \mathcal{A}_n) := ( K_{\tilde{\mathcal{X}}} \cdot \pi'^*\mathcal{A}_1 \cdot \dots \cdot \pi'^*\mathcal{A}_n).
\end{equation}
Using the projection formula, it is straightforward to see that the above intersection number (hence also $\mathrm{DF}$ and ${{\mathrm{M}}^{\mathrm{NA}}}$) is independent of the choice of model/resolution $\pi': \tilde{\mathcal{X}} \rightarrow \mathcal{X}$. 


\subsubsection{Product test configurations and definition of K-polystability}

\begin{mydef} \label{Def Kps} \emph{In analogy with the usual definition for polarized manifolds, and following \cite[Section 3]{SD1}, we say that }

\begin{itemize}
\item \emph{$(X,\alpha)$ is \emph{K-semistable} if $\mathrm{DF}(\mathcal{X},\mathcal{A}) \geq 0$ for all normal and relatively K\"ahler test configurations $(\mathcal{X}, \mathcal{A})$ for $(X,\alpha)$. }
\item \emph{$(X,\alpha)$ is \emph{K-polystable} if it is K-semistable, and in addition $\mathrm{DF}(\mathcal{X},\mathcal{A}) = 0$ if and only if $\mathcal{X}_{\vert \pi^{-1}(\mathbb{C})}$ is $\mathbb{C}^*$-equivariantly isomorphic to $X \times \mathbb{C}$. }
\end{itemize}
\end{mydef}

\noindent Note that demanding that $\mathcal{X}$ is $\mathbb{C}^*$-equivariantly isomorphic to $X \times \mathbb{P}^1$ is not enough: For instance, there are (algebraic) product test configurations $(X,L) \times \mathbb{C}$ whose Donaldson-Futaki invariant vanishes, but whose compactifications over $\mathbb{P}^1$ (and thus their corresponding cohomological test configuration $(\bar{\mathcal{X}}, c_1(\bar{\mathcal{L}}))$) is \emph{not} a product. See e.g. \cite[Example 2.8]{BHJ1}.

When it is necessary to make the distinction, we will refer to the above stability notions as \emph{cohomological}. In the same vein, we refer to the analogous stability notions for polarized manifolds (see e.g. \cite{Donaldsontoric, Berman, BHJ1}) as \emph{algebraic}.
For future use we recall the following typical construction of product configurations, given a $\mathbb{C}^*$-action on our given manifold $X$. 

\begin{ex} \label{Example product}
Any product test configuration is given by $X \times \mathbb{P}^1$ with the $\mathbb{C}^*$-action given as the diagonal action induced by a one-parameter subgroup $\lambda: \mathbb{C}^* \rightarrow \mathrm{Aut}(X)$, i.e. $\tau \cdot (x,z) := (\lambda(\tau)\cdot x, \tau z)$. The canonical morphism $\mu: X \times \mathbb{P}^1 \rightarrow X \times \mathbb{P}^1$ associated to the induced product configuration is given by $(x,z) \mapsto (\lambda^{-1}(z) \cdot x, z)$. 
\end{ex}

\noindent It was proven in \cite[Theorem A]{SD1} that cscK manifolds are always K-semistable. Moreover, note that if $(X,L)$ is a polarized manifold, then $(X,L)$ is K-semistable in the usual algebraic sense iff $(X, c_1(L))$ are (cohomologically) K-semistable. In other words, the algebraic and the cohomological notions of K-semistability are equivalent. 
It is an open question whether the same holds for K-polystability, but at least one of the implications always holds:

\begin{prop}
Suppose that $(X,L)$ is a polarized manifold such that $(X,c_1(L))$ is cohomologically K-polystable. Then $(X,L)$ is algebraically K-polystable.
\end{prop}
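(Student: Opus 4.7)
The plan is to show that cohomological K-polystability is a formally stronger condition than algebraic K-polystability, by reducing the latter to a restricted class of cohomological test configurations. Given a normal algebraic test configuration $(\mathcal{X}, \mathcal{L})$ for $(X,L)$ in Donaldson's sense, I would first verify that $(\mathcal{X}, c_1(\mathcal{L}))$ is a normal relatively K\"ahler cohomological test configuration for $(X, c_1(L))$. The total space is the same, and relative K\"ahlerness follows from relative ampleness: for a sufficiently positive line bundle $\mathcal{M}$ on $\mathbb{P}^1$, the line bundle $\mathcal{L} + \pi^*\mathcal{M}$ is ample on $\mathcal{X}$, hence admits a Hodge metric, so $c_1(\mathcal{L}) + \pi^*c_1(\mathcal{M})$ is represented by a K\"ahler form on $\mathcal{X}$.

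Next I would check that the Donaldson-Futaki invariants agree, i.e.
$$
\mathrm{DF}(\mathcal{X}, \mathcal{L}) = \mathrm{DF}(\mathcal{X}, c_1(\mathcal{L})).
$$
Both are defined by the same intersection-theoretic formula
$$
\tfrac{\bar{\mathcal{S}}}{n+1} V^{-1}(\mathcal{A}^{n+1}) + V^{-1}(K_{\mathcal{X}/\mathbb{P}^1} \cdot \mathcal{A}^n)
$$
computed on a smooth dominating resolution (with the reflexive-sheaf convention for $K_{\mathcal{X}/\mathbb{P}^1}$ when $\mathcal{X}$ is singular). Since numerical intersection numbers of line bundles coincide with those of their first Chern classes, this equality is essentially tautological; the projection formula on the resolution ensures independence of the chosen model.

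With these two identifications in place, the proof concludes in two short steps. First, algebraic K-semistability: for any normal algebraic test configuration, $\mathrm{DF}(\mathcal{X}, \mathcal{L}) = \mathrm{DF}(\mathcal{X}, c_1(\mathcal{L})) \geq 0$ follows directly from cohomological K-semistability, which is implied by cohomological K-polystability. Second, the polystability clause: if $\mathrm{DF}(\mathcal{X}, \mathcal{L}) = 0$, then by cohomological K-polystability of $(X, c_1(L))$, the restriction $\mathcal{X}_{|\pi^{-1}(\mathbb{C})}$ is $\mathbb{C}^*$-equivariantly isomorphic to $X \times \mathbb{C}$, which is precisely the triviality condition appearing in Definition \ref{Def Kps} and hence in the algebraic K-polystability requirement.

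The main conceptual point, rather than a serious obstacle, is verifying that the two notions of ``product'' align. This is clear from Definition \ref{Def Kps}, which uses the open part $\pi^{-1}(\mathbb{C})$ rather than the compactification over $\mathbb{P}^1$ --- this is exactly what is needed to avoid the subtlety of \cite[Example 2.8]{BHJ1}, where an algebraic product over $\mathbb{C}$ can fail to compactify into a product over $\mathbb{P}^1$. Thus the classes of test configurations used to test polystability match in the correct open-part sense, and the implication follows. Note that the converse implication --- algebraic K-polystability implying cohomological K-polystability --- remains open precisely because cohomological K-polystability tests against the strictly larger class of normal relatively K\"ahler test configurations with potentially transcendental polarizations.
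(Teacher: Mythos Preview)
Your proof is correct and follows essentially the same approach as the paper: pass from the algebraic test configuration $(\mathcal{X},\mathcal{L})$ to the cohomological one $(\mathcal{X},c_1(\mathcal{L}))$, observe that the Donaldson--Futaki invariants coincide, and note that the product condition in Definition~\ref{Def Kps} depends only on the total space $\mathcal{X}$ over $\pi^{-1}(\mathbb{C})$, hence is the same in both settings. Your version is in fact more detailed than the paper's, which simply asserts these points without elaboration.
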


\begin{proof}
Suppose that $(X,c_1(L))$ is cohomologically K-polystable and let $(\mathcal{X},\mathcal{L})$ be an algebraic test configuration for $(X,L)$. Then $(\mathcal{X},c_1(\mathcal{L}))$ is a cohomological test configuration for $(X,c_1(L))$ which moreover satisfies $\mathrm{DF}(\mathcal{X},\mathcal{L}) = \mathrm{DF}(\mathcal{X},c_1(\mathcal{L}))$. By assumption  we conclude that for all relatively K\"ahler test configurations $\mathrm{DF}(\mathcal{X},\mathcal{L}) \geq 0$, with equality if and only if $(\mathcal{X},c_1(\mathcal{L}))$ is a product. Finally, $(\mathcal{X},c_1(\mathcal{L}))$ is a product if and only if $(\mathcal{X},\mathcal{L})$ is a product, so we are done. 
\end{proof} 

\noindent In particular, the above notion of K-polystability generalizes the usual notion for polarized manifolds considered in \cite{Berman, BDL}.

\bigskip

\section{Asymptotics of Deligne functionals along singular test configurations} \label{Section 3}

\noindent In this section we study subgeodesic rays with singularity type prescribed by a (possibly singular) test configuration, and prove that the asymptotic slope of Deligne functionals against these rays can be computed as intersection numbers on the total space of the test configuration. We furthermore treat the case of the K-energy functional (Theorem \ref{Theorem generalized thm C}). We finally show (Lemma \ref{Lemma unique model}) that the singularity type of the associated ray actually characterizes the test configuration up to $\mathbb{C}^*$-equivariant isomorphism of the total space.

\subsection{Subgeodesic and geodesic rays associated to a cohomological test configuration} \label{Section geodesic}

\noindent Fix a compact K\"ahler manifold $(X,\omega)$ and let  $(\varphi_t)_{t \geq 0} \subset \mathrm{PSH}(X,\omega)$ be a ray of $\omega$-psh functions on $X$.  
Following Donaldson \cite{Donaldsontoric} and Semmes \cite{Semmes} we consider the standard correspondence between the family $(\varphi_t)_{t \geq 0}$ and an associated $S^1$-invariant function $\Phi$ on  $X \times \bar{\Delta}^*$, given by
$
\Phi(x,e^{-t+is}) = \varphi^t(x),
$
where the sign is chosen so that $t \rightarrow +\infty$ corresponds to $\tau := e^{-t+is} \rightarrow 0$. Here $\bar{\Delta}^*\subset \mathbb{C}$ denotes the punctured unit disc. The function $\Phi$ restricted to a fiber $X \times \{\tau\}$ thus corresponds precisely to $\varphi_t$ on $X$. In the direction of the fibers we hence have $p_1^*\omega + dd^c_{x}\Phi \geq 0$ (in the sense of currents, letting  $p_1: X \times \bar{\Delta}^* \rightarrow X$ denote the first projection).

\begin{mydef} \label{subgeodesic definition}
We say that $(\varphi_t)_{t \geq 0}$ is a \emph{subgeodesic ray} if the associated $S^1$-invariant function $\Phi$ on $X \times \bar{\Delta}^*$ is $p_1^*\omega$-psh. Furthermore, a locally bounded family of functions $(\varphi_t)_{t \geq 0}$ in $\mathrm{PSH}(X,\omega)$ is said to be a \emph{weak geodesic ray} if the associated $S^1$-invariant function $\Phi \in \mathrm{PSH}(X \times \bar{\Delta}^*, p_1^*\omega)$ 
satisfies
\begin{equation*}
(p_1^*\omega + dd^c_{(x,\tau)}\Phi)^{n+1} = 0
\end{equation*}
on $X \times \Delta^*$. 
\end{mydef}
\begin{mydef}
We say that $(\varphi^t)_{t \geq 0}$ is continuous (resp. locally bounded, smooth) if the corresponding $S^1$-invariant function $\Phi$ is continuous (resp. locally bounded, smooth). 
\end{mydef}

\noindent This terminology is motivated by the extensive study of (weak) geodesics in the space $\mathcal{H}$, 
see e.g. \cite{Blocki13, Chen00, Darvas14, Donaldsontoric, Semmes, Tosattigeodesicregularity}. 

Relying on theory for degenerate complex Monge-Amp\`ere equations on manifolds with boundary it is possible to (uniquely) associate a weak geodesic ray to any given \emph{smooth} cohomological test configuration $(\mathcal{X},\mathcal{A})$ for $(X,\alpha)$ \emph{dominating $X \times \mathbb{P}^1$}. We refer to \cite[Lemma 4.6]{SD1} for the construction of this geodesic ray. It is always of regularity $\mathcal{C}^{1,1}$, see \cite{Tosattigeodesicregularity}.

\medskip

\subsection{Subgeodesic rays with prescribed singularity type} \label{Subsection associated rays}

Now let $(\varphi_t)$ and $(\psi_t)$ be subgeodesic rays. Then there are unique $p_1^*\omega$-psh extensions to $X \times \bar{\Delta}$ of the $S^1$-invariant $p_1^*\omega$-psh functions corresponding to $(\varphi_t)$ and $(\psi_t)$, still denoted by $\Phi$ and $\Psi$. We then say that 
$$
(\varphi_t) \sim (\psi_t) \Longleftrightarrow \Phi - \Psi \in L^{\infty}_{\mathrm{loc}}(X \times \Delta)
$$
i.e. iff the rays have the same singularity type. We write $[(\varphi_t)]$ for the equivalence class of $(\varphi_t)$ under this relation. Compare also the notion of "parallell rays" in \cite{ChenTang}. 

In the following paragraphs we introduce a compatibility condition between subgeodesic rays and test configurations. As explained below this yields a natural way to associate a singularity type class of subgeodesic rays to any given test configuration $(\mathcal{X},\mathcal{A})$ for $(X,\alpha)$, in a way that is adapted to applications for energy functional asymptotics, see Section \ref{Section energy functional asymptotics}.

\subsubsection{Compatible rays and the singularity type of a test configuration} 
We here extend the definition given in \cite[Section 4]{SD1} to singular test configurations.  
To this end, suppose that $(\mathcal{X},\mathcal{A})$ is a (possibly singular) relatively K\"ahler test configuration for $(X,\alpha)$. Consider a smooth model $\hat{\mathcal{X}}$ for $\mathcal{X}$, i.e. a a $\mathbb{C}^*$-equivariant bimeromorphic morphism $\rho: \hat{\mathcal{X}} \rightarrow \mathcal{X}$, where $\hat{\mathcal{X}}$ is smooth and moreover dominates the product $X \times \mathbb{P}^1$. 
Note that a smooth model always exists (consider e.g. normalization of the graph of $\mathcal{X} \dashrightarrow X \times \mathbb{P}^1$ and resolve singularities).
We then have the following situation:

\[
 \begin{tikzcd}[ampersand replacement=\&]
    \hat{\mathcal{X}} \arrow[d, "\rho"] \arrow[ddr, controls={+(-1.5,-2) and +(0,0)},  "\pi", swap]
    \ar[dr, "\mu"]\\
    \mathcal{X} \arrow[dashed, r] \& X \times \mathbb{P}^1 \ar[r,"p_1"] \ar[d, "p_2"] \& X \\
    \& \mathbb{P}^1
 \end{tikzcd}
\]

\noindent Now suppose that $(\varphi_t)$ is any locally bounded subgeodesic ray on $X$, with $\Phi$ the $S^1$-invariant function on $X \times \mathbb{P}^1$ associated to the given ray $(\varphi_t)_{t \geq 0}$. By \cite[Proposition 3.10]{SD1} we have  
$$
\rho^*\mathcal{A} = \mu^*p_1^*\alpha + [D],
$$
where $D = \sum_{j=1}^n a_i D_i$ is a divisor on $\hat{\mathcal{X}}$ supported on the central fiber $\hat{\mathcal{X}}_0$. We can decompose the current of integration of $D$ as $\delta_D = \theta_D + dd^c\psi_D$, where $\theta_D$ is a smooth $S^1$-invariant $(1,1)$-form on $\hat{\mathcal{X}}$. Locally, we thus have 
$$
\psi_D = \sum_j a_j \log|f_j| \; \; \textrm{mod} \; \mathcal{C}^{\infty}, 
$$ 
where (writing $D := \sum_j a_jD_j$ for the decomposition of $D$ into irreducible components) the $f_j$ are local defining equations for the $D_j$ respectively. 
In particular, the choice of $\psi_D$ is uniquely determined modulo a smooth function on $\hat{\mathcal{X}}$.

\begin{mydef} \label{Definition compatibility} A locally bounded subgeodesic ray $(\varphi_t)_{t \geq 0}$ on $X$ is said to be $L^{\infty}$-compatible with $(\mathcal{X}, \mathcal{A})$ if 
$
\Psi := \Phi \circ \mu + \psi_D
$
extends to a locally bounded $\rho^*\Omega$-psh function on $\hat{\mathcal{X}}$. 
\end{mydef}

\noindent \noindent In particular, the singularity type of $\Phi \circ \mu$ is determined by the Green function $\psi_D$. The following notion is also useful: 

\begin{mydef} \label{Definition compatibility smooth}
A smooth ray $(\varphi_t)_{t \geq 0}$ is $C^{\infty}$-compatible with $(\mathcal{X}, \mathcal{A})$ if 
$
\Psi := \Phi \circ \mu + \psi_D
$
extends smoothly across $\hat{\mathcal{X}}_0$.
\end{mydef}

\begin{ex} \emph{A few examples are in order. Let $(\mathcal{X},\mathcal{A})$ be a given test configuration for $(X,\alpha)$.}
\begin{itemize}
\item \emph{The geodesic ray $(\varphi_t)^{(\mathcal{X},\mathcal{A})}$ is $L^{\infty}$-compatible but in general not $C^{\infty}$-compatible with $(\mathcal{X},\mathcal{A})$. See \cite[Section 4]{SD1} for details on the construction. }
\item \emph{Let $\Omega$ be a smooth $S^1$-invariant $(1,1)$-form such that $[\Omega] = \mathcal{A}$. Denote by $\Omega_{\tau}$ the restriction of $\Omega$ to the fiber $\mathcal{X}_{\tau}$. As noted in \cite{DervanRoss}, $\Omega_{\tau}$ and $\Omega_1$ are cohomologous, so we may define a ray $(\varphi_t)$ by the relation }
$
\lambda(\tau)^*\Omega_{\tau} - \Omega_1 = dd^c\varphi_{\tau}. 
$
\emph{We claim that the ray $(\psi_t)$ on $X$ defined by the relation $\psi_t := \varphi_{e^{-t}}$ is smooth and $C^{\infty}$-compatible with $(\mathcal{X},\mathcal{A})$. However, it is not in general a subgeodesic. }
\end{itemize}
\end{ex}

\noindent In particular, we observe that the set of subgeodesic rays $L^{\infty}$-compatible with a given test configuration are precisely those that are of the same singularity type as the uniquely associated geodesic ray. Interestingly, this is not necessarily true for $C^{\infty}$-compatibility. While the geodesic ray associated to a test configuration is a natural object, we will see in Section \ref{Section energy functional asymptotics} below that it is often useful to consider the more flexible compatibility notions rather than the geodesic ray (or subgeodesic rays of same singularity type).

\medskip

\subsection{Asymptotics of the Mabuchi- and Deligne functionals along singular test configurations } \label{Section energy functional asymptotics}
Let $(\mathcal{X},\mathcal{A})$ be a possibly singular test coniguration for $(X,\alpha)$. By resolution of singularities $\rho: \hat{\mathcal{X}} \rightarrow \mathcal{X}$ and the projection formula, it suffices to consider $(\hat{\mathcal{X}}, \rho^*\mathcal{A})$. Note that $\rho^*\mathcal{A}$ is here \emph{relatively nef} (with the loss of positivity occurring over the central fiber $\mathcal{X}_0$. Along these lines, reating the case of singular test configurations reduces to the case of smooth and \emph{relatively nef} test configurations. Using this point-of-view we now set out to extend \cite[Corollary 4.14 and Theorem 5.1]{SD1} by computing the asymptotic slope of the $\mathrm{J}$-functional and the Mabuchi functional respectively, considered along rays associated to possibly singular (normal) cohomological test configurations.

\subsubsection{Asymptotics for Deligne functionals}
For future use we first note that \cite[Theorem B]{SD1} still holds for normal (possibly singular) test configurations, since indeed neither the lhs or the rhs sees the central fiber (due to the projection formula). In other words, one may pass to any smooth model and the result goes through as in the smooth case. 
It should be understood that the following intersection numbers are computed on resolutions (see e.g. Remark \ref{Remark intersection number} below). 

\begin{thm} \label{Theorem generalized theorem B}
Let $X$ be a compact K\"ahler manifold of dimension $n$ and let $\theta_i$, $0 \leq i \leq n$, be closed $(1,1)$-forms on $X$. Set $\alpha_i := [\theta_i] \in H^{1,1}(X,\mathbb{R})$. Consider relatively K\"ahler cohomological test configurations $(\mathcal{X}, \mathcal{A}_i)$ for $(X,\alpha_i)$.  For each collection of smooth rays $(\varphi_i^t)_{t \geq 0}$ $\mathcal{C}^{\infty}$-compatible with $(\mathcal{X}_i, \mathcal{A}_i)$ respectively, the asymptotic slope of the multivariate energy functional $\langle \cdot, \dots, \cdot \rangle := \langle \cdot, \dots, \cdot \rangle_{(\theta_0, \dots, \theta_n)}$ is well-defined and satisfies
\begin{equation*}
\frac{\langle \varphi_0^t, \dots, \varphi_n^t \rangle}{t} \longrightarrow (\mathcal{A}_0 \cdot \dots \cdot \mathcal{A}_n)
\end{equation*}
as $t \rightarrow +\infty$. 
\end{thm}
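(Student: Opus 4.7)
The plan is to reduce the statement to the already established smooth case \cite[Theorem B]{SD1} via resolution of singularities and the projection formula, exactly as suggested in the paragraph preceding the theorem. Both sides of the target asymptotic equality are insensitive to birational modifications above the central fiber: the left-hand side is computed on $X$ and does not see $\mathcal{X}$ at all, while the right-hand side is by construction defined on a smooth model and is independent of the choice thereof by the projection formula. Hence the singular case should follow formally from the smooth one.

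First I would fix a common smooth dominating model $\rho: \hat{\mathcal{X}} \to \mathcal{X}$ equipped with a morphism $\mu: \hat{\mathcal{X}} \to X \times \mathbb{P}^1$, obtained by resolving singularities of the graph of the meromorphic map $\mathcal{X} \dashrightarrow X \times \mathbb{P}^1$. (If the test configurations are allowed to live on distinct total spaces $\mathcal{X}_i$, one instead takes a single smooth $\hat{\mathcal{X}}$ dominating all of them simultaneously, which exists by flattening and resolution.) Set $\hat{\mathcal{A}}_i := \rho^*\mathcal{A}_i$; these are $(1,1)$-classes on the smooth compact K\"ahler manifold $\hat{\mathcal{X}}$, each relatively nef (though in general only relatively semi-positive, losing strict positivity along the exceptional locus of $\rho$). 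By construction, the $C^{\infty}$-compatibility of each smooth ray $(\varphi_i^t)$ with $(\mathcal{X}, \mathcal{A}_i)$ is literally the same condition as $C^{\infty}$-compatibility with $(\hat{\mathcal{X}}, \hat{\mathcal{A}}_i)$, since Definition \ref{Definition compatibility smooth} already refers to a smooth dominating model of $\mathcal{X}$ and the Green function $\psi_{D_i}$ lives on $\hat{\mathcal{X}}$ to begin with.

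Applying the smooth version \cite[Theorem B]{SD1} to $(\hat{\mathcal{X}}, \hat{\mathcal{A}}_0, \ldots, \hat{\mathcal{A}}_n)$ then yields
$$
\lim_{t \to +\infty} \frac{\langle \varphi_0^t, \ldots, \varphi_n^t \rangle}{t} = (\hat{\mathcal{A}}_0 \cdot \ldots \cdot \hat{\mathcal{A}}_n)_{\hat{\mathcal{X}}},
$$
and the projection formula applied to $\rho$ identifies the right-hand side with $(\mathcal{A}_0 \cdot \ldots \cdot \mathcal{A}_n)_{\mathcal{X}}$, which is the claimed intersection number. Independence of the chosen resolution follows by applying the projection formula once more to a common refinement of any two resolutions.

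The main technical point to watch is that \cite[Theorem B]{SD1} must be applied in the slightly weaker setting where the classes $\hat{\mathcal{A}}_i$ are only relatively nef rather than relatively K\"ahler. I expect this to be harmless: the proof in the smooth relatively K\"ahler case is naturally phrased in terms of Deligne pairings and Stokes' theorem on the total space, and positivity is used only insofar as is needed for the Deligne pairing / intersection number to make sense. If any genuine obstruction arises, one sidesteps it by an $\epsilon$-perturbation: replace $\hat{\mathcal{A}}_i$ by $\hat{\mathcal{A}}_i + \epsilon \mathcal{B}$ for a fixed relatively K\"ahler class $\mathcal{B}$ on $\hat{\mathcal{X}}$, choose $C^\infty$-compatible rays for the perturbed data, apply the smooth theorem, and let $\epsilon \to 0^+$, exploiting continuity of the asymptotic slope (polynomial in the input potentials) and of the intersection number in the input cohomology classes.
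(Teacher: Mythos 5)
Your reduction is essentially the paper's own argument: the paper likewise passes to a smooth model dominating $X\times\mathbb{P}^1$ and observes that neither side of the asserted limit sees the central fiber, the only difference being that instead of citing \cite[Theorem B]{SD1} as a black box it re-runs that proof (fiber integration of $\bigwedge_i(\Omega_i+dd^c\Psi_i)$, Green--Riesz, and writing $t\mapsto\langle\varphi_0^t,\dots,\varphi_n^t\rangle$ as a difference of convex functions) precisely so as to cover the pulled-back classes, which are only relatively nef. This confirms your first remedy---relative K\"ahlerness is never used in that argument beyond adding large K\"ahler forms on the smooth total space and invoking multilinearity---so the $\epsilon$-perturbation fallback (which would anyway require controlling slopes of auxiliary mixed terms and rays compatible with the perturbed data) is not needed.
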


\begin{rem} \label{Remark intersection number}
Here the intersection number in the right hand side is computed on a common smooth and dominating model $\rho_i: \hat{\mathcal{X}} \rightarrow \mathcal{X}_i$. We then set
$$
(\mathcal{A}_0 \cdot \dots \cdot \mathcal{A}_n) := (\rho_0^*\mathcal{A}_0, \cdot \dots \cdot \rho_n^*\mathcal{A}_n).
$$   
This intersection number is independent of the choice of $\hat{\mathcal{X}}$. 
\end{rem}

\noindent The point is to note that this generalized version of \cite[Theorem B]{SD1} holds also for singular test configurations. 

\begin{proof}
Since the neither the left hand side or the right hand side of the statement sees the central fiber, the proof goes through as in the smooth and dominating case (see \cite[Theorem B]{SD1}). For the reader's convenience we recall the argument, with the required adaptations in notation:  Fix any smooth $S^1$-invariant $(1,1)$-forms $\Omega_i$  on $\mathcal{X}_i$ such that $[\Omega_i] =\mathcal{A}_i$ in $H^{1,1}(\mathcal{X}_i, \mathbb{R})$. Let $(\varphi_i^t)_{t\geq 0}$ be smooth and $\mathcal{C}^{\infty}$-compatible with $(\mathcal{X}, \mathcal{A}_i)$ respectively. Now fix a smooth model $\rho: \hat{\mathcal{X}} \rightarrow \mathcal{X}$ that also dominates $X \times \mathbb{P}^1$ via a morphism $\mu: \hat{\mathcal{X}} \rightarrow X \times \mathbb{P}^1$. 
In the notation of Definition \ref{Definition compatibility} the functions $\Phi_i\circ\mu+\psi_D$ are then smooth on the manifold with boundary $M:=\pi^{-1}(\bar{\Delta})$, and may thus be written as the restriction of smooth $S^1$-invariant functions $\Psi_i$ on $\hat{\mathcal{X}}$ respectively. 
Using the $\mathbb{C}^*$-equivariant isomorphism $\hat{\mathcal{X}}\setminus\hat{\mathcal{X}}_0\simeq X\times(\mathbb{P}^1\setminus\{0\})$ we view $(\Psi_i-\psi_D)_{\vert \hat{\mathcal{X}}_{\tau}}$ as a function $\varphi_i^{\tau}\in \mathcal{C}^{\infty}(X)$. 
By \cite[Proposition 2.8]{SD1} we then have
$$
dd^c_{\tau} \langle \varphi_0^t, \dots, \varphi_n^t \rangle =\pi_*\left(\bigwedge_i (\Omega_i + dd^c\Psi_i)\right).
$$
over $\mathbb{P}^1\setminus\{0\}$. 
Denoting by $u(\tau) := \langle \varphi_0^{\tau}, \dots, \varphi_n^{\tau} \rangle$ the Green-Riesz formula  then yields
$$
\frac{d}{dt}_{t=-\log\varepsilon} u(\tau)=\int_{\mathbb{P}^1\setminus\Delta_{\varepsilon}} dd^c_{\tau} u(\tau)=
$$
$$
\int_{\pi^{-1}(\mathbb{P}^1\setminus\Delta_{\varepsilon})}\bigwedge_i (\Omega_i + dd^c\Psi_i), 
$$
which converges to $(\mathcal{A}_0 \cdot \dots \cdot \mathcal{A}_n)$ as $\varepsilon\rightarrow 0$.  

It remains to show that 
\begin{equation*} 
 \lim_{t \rightarrow +\infty} \frac{ u(\tau)}{t} = \lim_{t \rightarrow +\infty} \frac{d}{dt} u(\tau),
\end{equation*} 
To see this, note that for each  closed $(1,1)$-form $\Theta$ on $\mathcal{X}$ and each smooth function $\Phi$ on $\mathcal{X}$, there is a K\"ahler form $\eta$ on $\mathcal{X}$ and a constant $C$ large enough so that $\Theta + C\eta + dd^c\Phi \geq 0$ on $\mathcal{X}$. Moreover, we have a relation
$$
\langle \varphi_0^t, \varphi_1^t, \dots, \varphi_n^t \rangle_{(\omega - \omega', \theta_1 \dots, \theta_n)}  = 
$$

$$
\langle {\varphi_0^t}, \varphi_1^t \dots, \varphi_n^t \rangle_{(\omega, \theta_1, \dots, \theta_n)} - \langle 0, \varphi_1^t \dots, \varphi_n^t \rangle_{(\omega', \theta_1, \dots, \theta_n)}
$$ 


\noindent and repeat this argument for each $i$, $0 \leq i \leq n$, by symmetry. It follows from the above 'multilinearity' that we can write $t \mapsto \langle \varphi_0^t, \dots, \varphi_n^t \rangle$ as a difference of convex functions,
concluding the proof. 
\end{proof}

\noindent The following is a generalization of \cite[Corollary 4.14]{SD1}: 

\begin{cor}  Let $(\mathcal{X},\mathcal{A})$ be a normal test configuration for $(X,\alpha)$ and let $t \mapsto \varphi_t$ be a \emph{subgeodesic} ray $L^{\infty}$-compatible with $(\mathcal{X},\mathcal{A})$. Suppose that $\hat{\mathcal{X}}$ is any smooth and dominating test configuration, with $\rho: \hat{\mathcal{X}} \rightarrow \mathcal{X}$ the associated morphism. Then the following limit is well-defined and 
$$
\frac{\mathrm{J}(\varphi_t)}{t} \longrightarrow \mathrm{J}^{\mathrm{NA}}(\mathcal{X},\mathcal{A}),
$$
as $t \rightarrow +\infty$. 
Here  $$\mathrm{J}^{\mathrm{NA}}(\mathcal{X},\mathcal{A}) := V^{-1}\left\{(\rho^*\mathcal{A} \cdot \mu^*p_1^*\alpha^n) - \frac{(\rho^*\mathcal{A}^{n+1})}{n+1}\right\}$$  
\end{cor}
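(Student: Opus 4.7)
The plan is to reduce the corollary to Theorem \ref{Theorem generalized theorem B} by a comparison argument between the given $L^\infty$-compatible subgeodesic ray and a reference smooth $\mathcal{C}^\infty$-compatible ray, after rewriting the $\mathrm{J}$-functional through the Deligne functional formalism. The key analytic input is that $L^\infty$-compatibility with a fixed test configuration forces two rays to remain within a uniform $L^\infty$-distance on $X$, and the $\mathrm{J}$-functional is Lipschitz in the sup-norm up to an $O(1)$-error.

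First, following Example \ref{Example Deligne pairing}, I would write
$$
\mathrm{J}(\varphi) = V^{-1}\langle \varphi, 0, \dots, 0\rangle_{(\omega,\dots,\omega)} - \frac{1}{n+1}V^{-1}\langle \varphi, \dots, \varphi\rangle_{(\omega,\dots,\omega)}.
$$
Next, I would produce a smooth $\mathcal{C}^\infty$-compatible reference ray $(\psi_t)_{t\geq 0}$ for $(\mathcal{X},\mathcal{A})$ by choosing a smooth $S^1$-invariant representative $\Omega$ of $\mathcal{A}$ on $\mathcal{X}$ and defining $\lambda(\tau)^*\Omega_{\tau} - \Omega_1 = dd^c\varphi_\tau$, $\psi_t := \varphi_{e^{-t}}$, as in the examples preceding Definition \ref{Definition compatibility smooth}. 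Applying Theorem \ref{Theorem generalized theorem B} to the tuple $((\mathcal{X},\mathcal{A}),(X\times\mathbb{P}^1,p_1^*\alpha),\dots,(X\times\mathbb{P}^1,p_1^*\alpha))$ with rays $(\psi_t,0,\dots,0)$ gives asymptotic slope $V^{-1}(\rho^*\mathcal{A}\cdot \mu^*p_1^*\alpha^n)$ for the first summand; applied to $((\mathcal{X},\mathcal{A}),\dots,(\mathcal{X},\mathcal{A}))$ with $(\psi_t,\dots,\psi_t)$ it gives slope $\tfrac{1}{n+1}V^{-1}(\rho^*\mathcal{A})^{n+1}$ for the second. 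Combining these yields $\mathrm{J}(\psi_t)/t \to \mathrm{J}^{\mathrm{NA}}(\mathcal{X},\mathcal{A})$.

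It then suffices to prove $|\mathrm{J}(\varphi_t) - \mathrm{J}(\psi_t)| = O(1)$. By Definition \ref{Definition compatibility}, both of $\Phi\circ\mu + \psi_D$ and $\Psi\circ\mu + \psi_D$ extend as locally bounded $\rho^*\Omega$-psh functions on the compact space $\hat{\mathcal{X}}$, and are therefore globally bounded; the shared Green-type pole $\psi_D$ cancels in the difference, so $(\Phi - \Psi)\circ\mu$ is bounded on $\hat{\mathcal{X}}$, and since $\mu$ is proper surjective this descends to a uniform bound $\sup_X|\varphi_t - \psi_t|\leq C$ independent of $t$. The linear term $V^{-1}\int_X \varphi\,\omega^n$ in $\mathrm{J}$ is then trivially controlled by $C$, and for the Monge-Amp\`ere energy one invokes the standard cocycle formula
$$
\mathrm{E}(\varphi) - \mathrm{E}(\psi) = \frac{1}{n+1}\sum_{j=0}^n V^{-1}\int_X (\varphi-\psi)\,\omega_\varphi^{n-j}\wedge \omega_\psi^j,
$$
which (each measure being a probability measure by cohomology) gives $|\mathrm{E}(\varphi_t) - \mathrm{E}(\psi_t)|\leq \sup_X|\varphi_t - \psi_t|\leq C$. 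Dividing by $t$ and letting $t\to +\infty$ transfers the asymptotic slope from $(\psi_t)$ to $(\varphi_t)$.

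The main obstacle is the uniform boundedness in the comparison step: one must really exploit the fact that the exceptional divisor contribution $\psi_D$ appearing in Definition \ref{Definition compatibility} is intrinsic to the test configuration $(\mathcal{X},\mathcal{A})$ and so is shared between both rays, and that local boundedness on the compact model $\hat{\mathcal{X}}$ together with properness of $\mu$ automatically upgrades to uniform boundedness on $X$. A minor additional subtlety is verifying that $\mathrm{J}(\varphi_t)$ is well-defined in the first place; this is immediate since $L^\infty$-compatibility together with compactness of $X$ ensures $\varphi_t \in \mathrm{PSH}(X,\omega)\cap L^{\infty}(X)$ for every $t\geq 0$.
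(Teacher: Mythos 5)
Your overall strategy is correct and is essentially the deduction the paper intends (it offers no separate proof, treating the corollary as a consequence of Theorem \ref{Theorem generalized theorem B}): write $\mathrm{J}$ in Deligne-functional form as in Example \ref{Example Deligne pairing}, compute its slope along a smooth $\mathcal{C}^{\infty}$-compatible reference ray, and transfer the slope to the given subgeodesic ray using the uniform bound $\sup_X|\varphi_t-\psi_t|\leq C$ coming from $L^{\infty}$-compatibility, the shared Green function $\psi_D$ cancelling on the compact set $\pi^{-1}(\bar{\Delta})\subset\hat{\mathcal{X}}$.

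One step needs more care. The inequality $|\mathrm{E}(\varphi_t)-\mathrm{E}(\psi_t)|\leq \sup_X|\varphi_t-\psi_t|$ requires the mixed terms $\omega_{\varphi_t}^{n-j}\wedge\omega_{\psi_t}^{j}$ to be \emph{positive} measures of total mass $V$; cohomology gives the mass but not the positivity, and the reference ray you construct via $\lambda(\tau)^*\Omega_\tau-\Omega_1=dd^c\varphi_\tau$ is normalized against $\Omega_1$ rather than $\omega$, so the functions $\psi_t$ need not be $\omega$-psh (the paper's own example even stresses that this ray need not be a subgeodesic). The fix is routine: take $\Omega$ relatively K\"ahler (as in the proof of Theorem \ref{Theorem generalized thm C}; note that the corollary should be read with the standing relatively K\"ahler hypothesis, which Theorem \ref{Theorem generalized theorem B} requires in any case), so that each fibre restriction $\lambda(\tau)^*\Omega_\tau$ is a K\"ahler form in $\alpha$, and absorb the smooth potential $u$ with $\Omega_1=\omega+dd^cu$ into the ray. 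The corrected ray consists of K\"ahler potentials for each fixed $t$, differs from yours by the fixed bounded function $u$, and remains $\mathcal{C}^{\infty}$-compatible, so both the slope computation and the cocycle estimate apply; possible failure of positivity in the $t$-direction is irrelevant because the estimate is used at fixed $t$. With this adjustment your argument is complete.
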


\begin{rem}
As before, the intersection number $\mathrm{J}^{\mathrm{NA}}(\mathcal{X},\mathcal{A})$ is independent of the choice of smooth model $\hat{\mathcal{X}}$.
\end{rem}


\subsubsection{Asymptotics for the Mabuchi functional}
Along the same lines we see that the asymptotics of the K-energy functional can be estimated along $C^{\infty}$-compatible rays (in case the test configuration is smooth we can also consider so called $C^{1,1}$-compatible rays, cf. \cite[Section 4]{SD1}, but we will not need this here). 
What we will be using is the following version of \cite[Theorem 5.1]{SD1}, valid also for singular (albeit normal) cohomological test configurations. As before, it should be understood that all intersection numbers are computed on any resolution, as in the definition of $C^{\infty}$- and $L^{\infty}$-compatibility, see Definition \ref{Definition compatibility}.

\begin{thm} \label{Theorem generalized thm C}
Let $(\mathcal{X},\mathcal{A})$ be a normal and relatively K\"ahler test configuration for $(X,\alpha)$. Let $\varphi_0 \in \mathcal{H}_0$. Then there is a smooth ray $[0,+\infty[ \ni t \mapsto \psi_t \in \mathrm{PSH}(X,\omega) \cap C^{\infty}(X) \cap E^{-1}(0)$ on $X$ emanating from $\varphi_0$, that is $C^{\infty}$-compatible with $(\mathcal{X},\mathcal{A})$ and satisfies
$$
\lim_{t \rightarrow +\infty} \frac{{\mathrm{M}}(\psi_t)}{t} = {{\mathrm{M}}^{\mathrm{NA}}}(\mathcal{X},\mathcal{A}).
$$
\end{thm}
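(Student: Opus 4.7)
The plan is to reduce to a smooth dominating model, apply Chen's decomposition of the K-energy, and then combine the Deligne-functional asymptotics of Theorem \ref{Theorem generalized theorem B} with a direct computation of the entropy slope. First I would pass to a smooth model $\rho:\hat{\mathcal{X}}\to\mathcal{X}$ dominating $X\times\mathbb{P}^1$ via $\mu:\hat{\mathcal{X}}\to X\times\mathbb{P}^1$, and write $\rho^*\mathcal{A}=\mu^*p_1^*\alpha+[D]$ with $D=\sum a_iD_i$ supported on $\hat{\mathcal{X}}_0$. To construct the ray, fix a smooth $S^1$-invariant representative $\Omega$ of $\rho^*\mathcal{A}$; since $(\mathcal{X},\mathcal{A})$ is relatively K\"ahler, $\Omega$ is fibrewise K\"ahler away from $\hat{\mathcal{X}}_0$, so I can choose a smooth $S^1$-invariant function $\Psi$ on $\hat{\mathcal{X}}$ with $(\Omega+dd^c\Psi)_{|\hat{\mathcal{X}}_\tau}>0$ for $\tau\neq 0$ and with $\Psi-\psi_D$ pulled back from a smooth function on $X\times\mathbb{P}^1$ restricting to $\varphi_0$ on $X\times\{1\}$. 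Transporting via the $\mathbb{C}^*$-action defines the desired smooth ray $(\psi_t)_{t\geq 0}$, which is $C^\infty$-compatible with $(\mathcal{X},\mathcal{A})$ by construction; subtracting the smooth function $t\mapsto\mathrm{E}(\psi_t)$ enforces the normalization $\psi_t\in E^{-1}(0)$.

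Next, I would invoke Chen's formula \eqref{Chens formula} to write
\begin{equation*}
\mathrm{M}(\psi_t)=\bar{\mathcal{S}}\,\mathrm{E}(\psi_t)-\mathrm{E}^{\mathrm{Ric}(\omega)}(\psi_t)+\mathrm{H}(\psi_t),
\end{equation*}
with $\mathrm{H}(\psi_t)=V^{-1}\int_X\log(\omega_{\psi_t}^n/\omega^n)\,\omega_{\psi_t}^n$. Using Example \ref{Example Deligne pairing}, the first two terms are Deligne functionals associated with the data $(\omega,\dots,\omega)$ and $(\mathrm{Ric}(\omega),\omega,\dots,\omega)$. Theorem \ref{Theorem generalized theorem B}, which is valid for singular test configurations, then yields
\begin{equation*}
\frac{\bar{\mathcal{S}}\,\mathrm{E}(\psi_t)-\mathrm{E}^{\mathrm{Ric}(\omega)}(\psi_t)}{t}\longrightarrow V^{-1}\!\left(\frac{\bar{\mathcal{S}}\,(\rho^*\mathcal{A})^{n+1}}{n+1}-(\mu^*p_1^*c_1(X)\cdot(\rho^*\mathcal{A})^n)\right).
\end{equation*}
Writing $K_{\hat{\mathcal{X}}/\mathbb{P}^1}=\mu^*p_1^*K_X+E_\mu$ with $E_\mu$ effective $\mu$-exceptional, the right-hand side rewrites as $\mathrm{DF}(\mathcal{X},\mathcal{A})-V^{-1}(E_\mu\cdot(\rho^*\mathcal{A})^n)$. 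So the task reduces to showing
\begin{equation*}
\frac{\mathrm{H}(\psi_t)}{t}\longrightarrow V^{-1}\!\left((E_\mu\cdot(\rho^*\mathcal{A})^n)+((\mathcal{X}_{0,\mathrm{red}}-\mathcal{X}_0)\cdot(\rho^*\mathcal{A})^n)\right),
\end{equation*}
which is exactly the missing contribution to $\mathrm{M}^{\mathrm{NA}}(\mathcal{X},\mathcal{A})$.

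The hard step is this entropy asymptotic; the geometric contributions from the $\mu$-exceptional locus and from the non-reducedness of the central fiber must both be recovered from the logarithmic singularities of the relevant measures. I would rewrite
\begin{equation*}
\mathrm{H}(\psi_t)=V^{-1}\!\int_{\hat{\mathcal{X}}_{\tau_t}}\log\!\frac{(\Omega+dd^c\Psi)^n_{|\hat{\mathcal{X}}_{\tau_t}}}{\mu^*\omega^n_{|\hat{\mathcal{X}}_{\tau_t}}}\,(\Omega+dd^c\Psi)^n_{|\hat{\mathcal{X}}_{\tau_t}},
\end{equation*}
fixing an auxiliary smooth volume form $\mathrm{dV}$ on $\hat{\mathcal{X}}$ to separate numerator and denominator: the ratio $\mu^*\omega^n/\mathrm{dV}$ vanishes to the order prescribed by the log discrepancy of $\mu$, giving the $E_\mu$-term after integration by parts, while the ratio of $(\Omega+dd^c\Psi)^n$ against $\mathrm{dV}$ picks up precisely the multiplicities of the central fiber components, contributing $(\mathcal{X}_{0,\mathrm{red}}-\mathcal{X}_0)\cdot(\rho^*\mathcal{A})^n$. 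Concretely one differentiates $\mathrm{H}(\psi_t)$ in $t$, applies the Green--Riesz formula on $\mathbb{P}^1\setminus\bar\Delta_\varepsilon$ exactly as in the proof of Theorem \ref{Theorem generalized theorem B}, and analyses the boundary contribution from an $\varepsilon$-neighbourhood of $0$: smoothness of $\Psi-\psi_D$ controls the non-logarithmic part, the two logarithmic contributions localize around $\mathcal{X}_0\cup\mathrm{Exc}(\mu)$, and one obtains the claimed limit by a Poincar\'e--Lelong/residue calculation against $(\Omega+dd^c\Psi)^n$. Convexity up to a linear term (as in the proof of Theorem \ref{Theorem generalized theorem B}) upgrades the derivative asymptotics to the slope asymptotics, concluding the proof. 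The crux of the argument, and the main obstacle, is isolating the two logarithmic contributions cleanly enough for the residue computation to yield the exact intersection numbers, particularly because the non-reducedness of $\mathcal{X}_0$ and the discrepancy divisor $E_\mu$ interact on $\hat{\mathcal{X}}_0$.
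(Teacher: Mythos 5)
Your overall route is essentially the paper's: construct a smooth ray by transporting a relatively K\"ahler representative of $\mathcal{A}$ through the $\mathbb{C}^*$-action on a smooth model dominating $X\times\mathbb{P}^1$ (this is exactly how $C^\infty$-compatibility is obtained in the paper's proof), then compute the slope of $\mathrm{M}$ via Chen's decomposition, treating the energy terms with Theorem \ref{Theorem generalized theorem B}. Your bookkeeping is correct: with $E_\mu=K_{\hat{\mathcal{X}}/X\times\mathbb{P}^1}$ the energy part has slope $\mathrm{DF}(\mathcal{X},\mathcal{A})-V^{-1}(E_\mu\cdot(\rho^*\mathcal{A})^n)$, so the statement reduces to showing that the entropy slope equals $V^{-1}\bigl((E_\mu\cdot(\rho^*\mathcal{A})^n)+((\hat{\mathcal{X}}_{0,\mathrm{red}}-\hat{\mathcal{X}}_0)\cdot(\rho^*\mathcal{A})^n)\bigr)$.

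The gap is that entropy asymptotic itself. What you give there is a heuristic (the log discrepancy of $\mu$ produces the $E_\mu$-term, the multiplicities of $\hat{\mathcal{X}}_0$ produce the other, via a ``Poincar\'e--Lelong/residue calculation''), and you yourself flag that isolating the two logarithmic contributions is the main obstacle; as written this step is not proved, and it is precisely the technical content of \cite[Theorem 5.1]{SD1} (matching upper and lower bounds for the entropy against a reference measure with prescribed log poles along the central fibre, not a purely formal residue computation). The paper does not redo this work: it constructs the ray so that the proof of \cite[Theorem 5.6]{SD1} applies verbatim, the only new points being that $\rho^*\mathcal{A}$ is merely relatively nef with positivity lost only over $\hat{\mathcal{X}}_0$ (the constructed fibre forms are genuinely K\"ahler for $\tau\neq 0$), and that by the projection formula neither $\mathrm{M}(\psi_t)$ nor $\mathrm{M}^{\mathrm{NA}}(\mathcal{X},\mathcal{A})$ sees the central fibre, so the singular case reduces to the already established smooth case. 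Your argument becomes complete if you replace the residue sketch by this reduction/citation; as a free-standing proof it is incomplete at its crucial step. Two smaller points: requiring $\Psi$ to be smooth on $\hat{\mathcal{X}}$ and simultaneously $\Psi-\psi_D$ to be the pullback of a smooth function on all of $X\times\mathbb{P}^1$ is impossible, since $\psi_D$ has log poles along $D$; what you want is that $\Psi-\psi_D$ descends to a smooth function on $X\times(\mathbb{P}^1\setminus\{0\})$, so that $\Phi\circ\mu+\psi_D=\Psi$ extends smoothly across $\hat{\mathcal{X}}_0$. Also, subtracting $\mathrm{E}(\psi_t)$ does not literally preserve $C^\infty$-compatibility with $(\mathcal{X},\mathcal{A})$ (it twists the compatible class by a multiple of the central fibre, cf.\ Proposition \ref{Prop basic properties of projection}); this is harmless because both $\mathrm{M}$ and $\mathrm{M}^{\mathrm{NA}}$ are unaffected, and the paper makes the same normalization, but it merits a sentence.
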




\begin{proof}
We begin by constructing such a ray $r \mapsto \psi_t$. To do this, consider any smooth model $\hat{\mathcal{X}}$ for $\mathcal{X}$, with $\rho: \hat{\mathcal{X}} \rightarrow \mathcal{X}$ the associated morphism. Since $\mathcal{A}$ is relatively K\"ahler on $\mathcal{X}$, note that $\hat{\mathcal{A}} := \rho^*\mathcal{A}$ is relatively nef on $\hat{\mathcal{X}}$ (with the loss of positivity occurring over the central fiber $\hat{\mathcal{X}}_0$). Now let $\Omega$ be any smooth $S^1$-invariant $(1,1)$ form on $\mathcal{X}$ such that $[\Omega] = \mathcal{A}$. For each $\tau \in \mathbb{C}^*$ we write $\rho(\tau): \hat{\mathcal{X}}_{\tau} \rightarrow \mathcal{X}_{\tau}$ for the isomorphism between the respective fibers, and $\lambda(\tau): \mathcal{X}_{\tau} \rightarrow X$. In particular, we identify $X$ with $\mathcal{X}_1$ and $\hat{\mathcal{X}}_1$, via the respective isomorphisms $\lambda(1)$ and $\lambda(1) \circ \rho(1)$, and write $\hat{\lambda}(\tau): \hat{\mathcal{X}}_{\tau} \rightarrow X$ for the composition $\lambda(\tau) \circ \rho(\tau)$. 

Now note that $\rho(\tau)^*\Omega_{\tau} := \hat{\Omega}_{\tau}$ and $\rho(1)^*\Omega_1 := \hat{\mathcal{X}}_{1}$ are cohomologous, so there is for each $\tau \neq 0$ a smooth real $S^1$-invariant function $\xi_{\tau}$ on $X$, for which 
$$
\hat{\lambda}(\tau)^*\hat{\Omega}_{\tau} = \hat{\lambda}(1)^*\hat{\Omega}_1 + dd^c\xi_{\tau}.  
$$
Then $t \mapsto \psi_t := \xi_{\tau}$, with $\tau := e^{-t + is}$, defines a smooth ray (without loss of generality normalized so that $E(\varphi_t) = 0$) that is moreover $C^{\infty}$-compatible with $(\hat{\mathcal{X}}, \hat{\mathcal{A}})$, thus also with $(\mathcal{X},\mathcal{A})$ by Definition \ref{Definition compatibility smooth}. We finally claim that the proof of \cite[Theorem 5.6]{SD1} can be seen to go through for this smooth ray $(\psi_t)$. Indeed $\hat{\Omega}_1 + dd^c\xi_{\tau}$ is strictly positive away from $\hat{\mathcal{X}}_0$, and neither the lhs nor the rhs (due to the projection formula) sees the central fiber. 
\end{proof}

\medskip

\subsection{An injectivity lemma for the singularity type class}

We finally prove an injectivity result 
that is central for our proof of geodesic K-polystability of cscK manifolds. Consider the assignment
\begin{equation}
(\mathcal{X},\mathcal{A}) \mapsto [(\varphi_t)^{(\mathcal{X},\mathcal{A})}]
\end{equation}
which sends a relatively K\"ahler test configuration to the equivalence class of subgeodesic rays with which it is $L^{\infty}$-compatible (Definition \ref{Definition compatibility}). We wish to prove that if $(\mathcal{X},\mathcal{A})$ and $(\mathcal{Y},\mathcal{B})$ are two test configuration for $(X,\alpha)$ satisfying 
$
[(\varphi_t)^{(\mathcal{X},\mathcal{A})}] = [(\varphi_t)^{(\mathcal{Y},\mathcal{B})}],
$
then the canonical $\mathbb{C}^*$-equivariant isomorphism $\mathcal{X} \setminus \mathcal{X}_0 \rightarrow \mathcal{Y} \setminus \mathcal{Y}_0$ extends to an isomorphism $\mathcal{X} \rightarrow \mathcal{Y}$. 
This can be seen as an extension of the concept of unique ample model, cf. \cite{BHJ1}.

\subsubsection{Uniqueness of the relatively K\"ahler model} \label{Section unique model} 
We now introduce the
 concept of \emph{relatively K\"ahler model} and show that such an object is \emph{unique}. The methods used are  completely different from the ones in the case of polarized manifolds, when the corresponding result follows from the existence of a one-to-one correspondence between ample test configurations and finitely generated $\mathbb{Z}$-filtrations. This follows from the so called 'reverse Rees construction', see e.g. \cite{BHJ1} for details. 

To introduce our methods, first suppose that $X$, $Y$, $Z$ are normal compact K\"ahler spaces
such that $\phi: X \dashrightarrow Y$ is a bimeromorphic map and $\mu: Z \rightarrow X$, $\rho: Z \rightarrow Y$ are bimeromorphic morphisms (modifications). Up to replacing $Z$ by a $Z'$ we can suppose that $\mu$ is a sequence of blow-ups with smooth center (see Hironaka \cite{Hironaka1, AHV1, AHV2}). In particular, $\mu$ is a projective morphism. Importantly, this implies that the fibers $\mu^{-1}(x)$, $x \in X$ are projective varieties, so they are covered by curves. 
\[
 \begin{tikzcd}[ampersand replacement=\&]
    Z \arrow[d, "\mu"] \arrow[dr, "\rho"]\\
    X \arrow[dashed, r] \& Y 
 \end{tikzcd}
\]  
Assume further that $\alpha \in H^{1,1}(X,\mathbb{R})$ and $\beta \in H^{1,1}(Y,\mathbb{R})$ are K\"ahler classes satisfying
$
\mu^*\alpha = \rho^*\beta.
$
We then claim that $\phi$ is in fact an isomorphism. Indeed, let $x \in X$, and $C$ be a curve in $\mu^{-1}(x) \subset Z$. The projection formula then yields
$$
0 = (\mu^*\alpha) \cdot [C] = (\rho^*\beta) \cdot [C] = \beta \cdot \rho_*[C]. 
$$
Since $\beta$ is K\"ahler we must have $\rho_*[C] = 0$, so that $\dim \rho(C) < 1$. Hence $\rho$ contracts the curve $C$ to a point in $Y$. Finally recall that $\phi$ is a morphism if and only if for all curves $C \subset Z$,  $\mu(C) = \mathrm{point}$ implies that $\rho(C) = \mathrm{point}$. By symmetry in $X$ and $Y$ (in particular, $\phi$ is bimeromorphic and $\alpha$ is also K\"ahler) this concludes. 

The point is that, along the lines of the above, we obtain the following key lemma.
It can be seen as a K\"ahler analogue of the notion of \emph{unique ample model} introduced in \cite{BHJ1}. 

\begin{lem} \label{Lemma unique model}
Any subgeodesic ray $[0,+\infty) \ni t \mapsto \varphi_t \in \mathrm{PSH}(X,\omega) \cap L^{\infty}(X)$ is $L^{\infty}$-compatible with at most one normal, relatively K\"ahler test configuration $(\mathcal{X}, \mathcal{A})$ for $(X,\alpha)$. 
\end{lem}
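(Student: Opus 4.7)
The plan is to implement the K\"ahler contraction argument outlined in the discussion preceding the lemma. Let $(\mathcal{X},\mathcal{A})$ and $(\mathcal{Y},\mathcal{B})$ be two normal, relatively K\"ahler test configurations for $(X,\alpha)$ that are both $L^{\infty}$-compatible with the same subgeodesic ray $(\varphi_t)$, and let $\Phi$ be the associated $S^1$-invariant function on $X\times\bar{\Delta}^*$. The goal is to show that the canonical $\mathbb{C}^*$-equivariant bimeromorphic map $\mathcal{X}\dashrightarrow\mathcal{Y}$ (the identity off the central fibers via \eqref{equiviso}) extends to a $\mathbb{C}^*$-equivariant isomorphism of the total spaces. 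To set up the geometry I would first construct a common smooth model $\hat{Z}$ dominating $\mathcal{X}$, $\mathcal{Y}$ and $X\times\mathbb{P}^1$ by resolving singularities of the graph of the relevant bimeromorphic maps; by Hironaka we may arrange each modification $\rho_{\mathcal{X}}\colon\hat{Z}\to\mathcal{X}$ and $\rho_{\mathcal{Y}}\colon\hat{Z}\to\mathcal{Y}$ to be a composition of blow-ups with smooth centers, hence projective, so their fibres are covered by curves.

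The central step is to upgrade the $L^{\infty}$-compatibility condition to the cohomological identity $\rho_{\mathcal{X}}^*\mathcal{A}=\rho_{\mathcal{Y}}^*\mathcal{B}$ in $H^{1,1}(\hat{Z},\mathbb{R})$. Applying \cite[Proposition 3.10]{SD1} to each configuration gives decompositions
$$
\rho_{\mathcal{X}}^*\mathcal{A}=\mu^*p_1^*\alpha+[D],\qquad \rho_{\mathcal{Y}}^*\mathcal{B}=\mu^*p_1^*\alpha+[D'],
$$
with $D,D'$ $\mathbb{R}$-divisors supported on $\hat{Z}_0$ whose Green functions $\psi_D,\psi_{D'}$ enter Definition \ref{Definition compatibility}. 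Since the same $\Phi\circ\mu$ appears in both $\Phi\circ\mu+\psi_D$ and $\Phi\circ\mu+\psi_{D'}$, and both of these extend to locally bounded quasi-psh functions on $\hat{Z}$, the difference $\psi_D-\psi_{D'}$ is locally bounded on $\hat{Z}$. Writing $D-D'=\sum_j b_j D_j$ in irreducible components and examining a generic point of each $D_j$, where $\psi_D-\psi_{D'}=b_j\log|f_j|$ modulo a smooth function, local boundedness forces $b_j=0$ for every $j$. Hence $D=D'$ and the desired cohomological equality follows.

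With this equality in hand I would run the K\"ahler argument from the hint. Using relative K\"ahlerness, fix a K\"ahler form $\beta$ on $\mathbb{P}^1$ large enough that $\mathcal{A}_\beta:=\mathcal{A}+\pi_{\mathcal{X}}^*\beta$ and $\mathcal{B}_\beta:=\mathcal{B}+\pi_{\mathcal{Y}}^*\beta$ are both K\"ahler. Since $\pi_{\mathcal{X}}\circ\rho_{\mathcal{X}}=\pi_{\mathcal{Y}}\circ\rho_{\mathcal{Y}}$ on $\hat{Z}$, the identity $\rho_{\mathcal{X}}^*\mathcal{A}_\beta=\rho_{\mathcal{Y}}^*\mathcal{B}_\beta$ holds in $H^{1,1}(\hat{Z},\mathbb{R})$. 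For any curve $C\subset\rho_{\mathcal{X}}^{-1}(x)$ the projection formula yields
$$
0=(\rho_{\mathcal{X}}^*\mathcal{A}_\beta\cdot[C])=(\rho_{\mathcal{Y}}^*\mathcal{B}_\beta\cdot[C])=(\mathcal{B}_\beta\cdot\rho_{\mathcal{Y}*}[C]),
$$
and since $\mathcal{B}_\beta$ is K\"ahler this forces $\rho_{\mathcal{Y}*}[C]=0$, i.e.\ $\rho_{\mathcal{Y}}$ contracts $C$. The standard rigidity criterion (a rational map is a morphism iff it contracts every curve contracted by the source resolution) then upgrades $\mathcal{X}\dashrightarrow\mathcal{Y}$ to a morphism, and by symmetry in $\mathcal{X}$ and $\mathcal{Y}$ to an isomorphism, which is automatically $\mathbb{C}^*$-equivariant by continuity from the open dense locus $\mathcal{X}\setminus\mathcal{X}_0$. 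The step I expect to be the main obstacle is the identification $D=D'$ from local boundedness of $\psi_D-\psi_{D'}$: the generic-point argument captures the correct idea, but some care is required to ensure that the Green-function representatives are consistently chosen at points where several components of $\hat{Z}_0$ meet, and that the ambient quasi-psh extension is indeed locally bounded (and not merely locally bounded off a pluripolar set), as both ingredients are needed for the coefficient extraction to be rigorous.
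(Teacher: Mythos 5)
Your proposal is correct and follows essentially the same route as the paper: pass to a common smooth model dominating both test configurations and $X\times\mathbb{P}^1$ obtained by blow-ups with smooth centers, use \cite[Proposition 3.10]{SD1} to write $\rho_{\mathcal{X}}^*\mathcal{A}$ and $\rho_{\mathcal{Y}}^*\mathcal{B}$ as $\mu^*p_1^*\alpha$ plus divisors supported on the central fiber, deduce from $L^{\infty}$-compatibility that the Green functions differ by a locally bounded function so the divisors (hence the pulled-back classes) agree, and then use relative K\"ahlerness with the projection formula on curves in fibers to upgrade the bimeromorphic map to a morphism, concluding by symmetry. The only cosmetic difference is that you extract $D=D'$ by a coefficient comparison at generic points of the components of $\hat{Z}_0$ (where the boundedness of $(a_j-a_j')\log|f_j|$ forces $a_j=a_j'$, so your worry about points where several components meet is harmless), whereas the paper phrases the same step via equality of multiplier ideal sheaves.
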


\begin{proof}
The proof follows the argument outlined above. To this end, consider normal relatively K\"ahler cohomological test configurations $(\mathcal{X}_i, \mathcal{A}_i, \lambda_i, \pi_i)$, $i = 1,2$, for $(X,\alpha)$. By the definition of a cohomological test configuration there are canonical isomorphisms $X \times (\mathbb{P}^1 \setminus \{0\}) \rightarrow \mathcal{X}_i \setminus \pi_i^{-1}(0)$, which induce bimeromorphic maps $\phi_i: X \times \mathbb{P}^1 \dashrightarrow \mathcal{X}_i$, $i = 1,2$.  Note also that we may choose a smooth manifold $\mathcal{Z}$ that \emph{simultaneously} dominates $\mathcal{X}_1$, $\mathcal{X}_2$ and $X \times \mathbb{P}^1$. 
We thus have a resolution of indeterminacy as follows: 

\[
 \begin{tikzcd}[ampersand replacement=\&]
  \&  \mathcal{Z} \arrow[dl, "\mu_1"] \arrow[d,"r"] \ar[dr, "\mu_2"]\\
    \mathcal{X}_1 \& X \times \mathbb{P}^1 \arrow[dashed, l, "\phi_1"] \arrow[dashed,r, "\phi_2"] \& \mathcal{X}_2
 \end{tikzcd}
\]

\noindent Up to replacing $\mathcal{Z}$ by a $\mathcal{Z}'$ we can moreover assume that the dominating morphisms are given by composition of blow-ups with smooth center, hence projective, so that the fibers can be covered by curves.

Now suppose that there is a subgeodesic ray $t \mapsto \varphi_t \in \mathrm{PSH}(X,\omega) \cap L^{\infty}(X)$ that is  $L^{\infty}$-compatible with both $\mathcal{X}_1$ and $\mathcal{X}_2$. In other words, if $\psi_{1}$ and $\psi_{2}$ are Green functions for $D_{1}$ and $D_{2}$ respectively, and $\Phi$ is the $S^1$-invariant function on $X \times \mathbb{P}^1$ corresponding to $(\varphi_t)_{t \geq 0}$, then $r^*\Phi + \psi_{1}$ and $r^*\Phi + \psi_{2}$ are both (locally) bounded on $\mathcal{Z}$. Hence so is their difference, i.e. we have
$$
\psi_{1} - \psi_{2} \in L^{\infty}_{\mathrm{loc}}(\mathcal{Z}).
$$ 
In particular, the Green functions are of the same singularity type, so their respective divisors of singularities coincide, i.e. $[D_1] = [D_2]$. (One way to see this is to consider the associated multiplier ideal sheaves $\mathcal{I}(\psi_{1})$ and $\mathcal{I}(\psi_{2})$. Since $\psi_1$ and $\psi_2$ (locally) differ by a bounded function, we must have $\mathcal{I}(\psi_{1}) = \mathcal{I}(\psi_{2})$. Pulling back via the morphism $r$ we then get $\mathcal{O}_{\mathcal{Z}}(-D_{1}) = r^*\mathcal{I}(\psi_{1}) = r^*\mathcal{I}(\psi_{2}) = \mathcal{O}_{\mathcal{Z}}(-D_{2})$, so that in particular $[D_{2}] = [D_{1}]$ holds).

We now wish to compare the respective pull-backs of $\mathcal{A}_i$ to $\mathcal{Z}$. 
To do this, note that there are $\mathbb{R}$-divisors $D_{1}$ and $D_{2}$ supported on the central fiber $\mathcal{Z}_0$  such that 
$$
\mu_1^*\mathcal{A}_{1} = r^*p_1^*\alpha + [D_{1}],
$$
and 
$$
\mu_2^*\mathcal{A}_{2} = r^*p_1^*\alpha + [D_{2}],
$$
respectively, see \cite[Proposition 3.10]{SD1}. As a consequence, $\mu_1^*\mathcal{A}_{1} = \mu_2^*\mathcal{A}_{2}$ if and only if $[D_{1}] = [D_{2}]$.

Finally, since the cohomology class $\mathcal{A}_{1}$ on $\mathcal{X}_1$ is relatively K\"ahler, there is a K\"ahler form $\eta$ on $\mathbb{P}^1$ such that $\mathcal{A}_i + \pi_i^*\eta_i$ is K\"ahler on $\mathcal{X}_i$. As before, we conclude that the bimeromorphic map $\phi_2 \circ \phi_1^{-1}: \mathcal{X}_1 \dashrightarrow \mathcal{X}_2$ is in fact a morphism: Indeed, let $x \in \mathcal{X}_1$ and $C$ be a curve in $\mu_1^{-1}(x) \subset Z$. Since $\mu_i^*\pi_i^*\eta_i \cdot C = 0$, $i = 1,2$, the projection formula yields \\
$$
0 = (\mu_1^*\mathcal{A}_1 + \mu_1^*\pi_1^*\eta_1) \cdot [C] = (\mu_2^*\mathcal{A}_2 + \mu_1^*\pi_1^*\eta_1) \cdot [C]
$$

$$
= (\mu_2^*\mathcal{A}_2 + \mu_2^*\pi_2^*\eta_2) \cdot [C] = (\mathcal{A}_2 + \pi_2^*\eta_2) \cdot (\mu_2)_*[C].
$$
\\
Hence $\dim \mu_2(C) < 1$. It follows that the bimeromorphic morphism $\mathcal{X}_1 \dashrightarrow \mathcal{X}_2$ is in fact a morphism. By symmetry in $\mathcal{X}_1$ and $\mathcal{X}_2$ we see that if also $\mathcal{A}_{2}$ is relatively K\"ahler, then $\mathcal{X}_1$ and $\mathcal{X}_2$ are isomorphic (and the isomorphism is even given explicitly as the composition $\phi_2 \circ \phi_1^{-1}$).    
\end{proof}

\noindent Note that we may view the above Lemma \ref{Lemma unique model} as an injectivity result. Another useful reformulation of the above is the following:

\begin{cor}
Two test configurations are isomorphic iff their associated geodesic rays are of same singularity type, i.e. if the difference of the associated $S^1$-invariant functions is uniformly bounded.  
\end{cor}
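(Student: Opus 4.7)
The plan is to deduce this corollary directly from Lemma \ref{Lemma unique model}, which is the main content. The corollary is essentially a repackaging: once we unwind the definitions, the two directions become an easy application of that lemma together with the intrinsic nature of the geodesic ray construction.

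For the ``only if'' direction, suppose $(\mathcal{X}_1,\mathcal{A}_1)$ and $(\mathcal{X}_2,\mathcal{A}_2)$ are $\mathbb{C}^*$-equivariantly isomorphic test configurations compatible with the canonical identifications over $\mathbb{P}^1 \setminus \{0\}$. Then the pullback of $\mathcal{A}_2$ on any common smooth model coincides with that of $\mathcal{A}_1$, and the associated Green functions $\psi_{D_1}$ and $\psi_{D_2}$ (as in Section \ref{Subsection associated rays}) differ by a smooth function on the model. Thus the weak geodesic ray solving the homogeneous complex Monge--Amp\`ere equation with the corresponding boundary data (cf.\ \cite[Lemma 4.6]{SD1} and its extension to normal test configurations) has the same singularity type from either side; equivalently, the associated $S^1$-invariant functions $\Phi_1$ and $\Phi_2$ on $X\times\bar\Delta^*$ differ by a locally bounded function.

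For the ``if'' direction, suppose the geodesic rays $(\varphi_t)^{(\mathcal{X}_1,\mathcal{A}_1)}$ and $(\varphi_t)^{(\mathcal{X}_2,\mathcal{A}_2)}$ are of the same singularity type, so that the corresponding $S^1$-invariant functions $\Phi_1, \Phi_2$ satisfy $\Phi_1 - \Phi_2 \in L^\infty_{\mathrm{loc}}(X\times\Delta)$. Recall that each geodesic ray is by construction $L^\infty$-compatible with its own test configuration (the leading example given after Definition \ref{Definition compatibility smooth}). Since $L^\infty$-compatibility is a statement about the singularity type of $\Phi\circ\mu + \psi_D$ on a smooth model, and adding a locally bounded function preserves this property, it follows that $(\varphi_t)^{(\mathcal{X}_1,\mathcal{A}_1)}$ is $L^\infty$-compatible with both $(\mathcal{X}_1,\mathcal{A}_1)$ and $(\mathcal{X}_2,\mathcal{A}_2)$. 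Now apply Lemma \ref{Lemma unique model}: the existence of a single locally bounded subgeodesic ray compatible with both normal relatively K\"ahler test configurations forces the canonical $\mathbb{C}^*$-equivariant bimeromorphic map $\mathcal{X}_1 \dashrightarrow \mathcal{X}_2$ to extend to an isomorphism.

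The only subtlety, and the step I would be most careful about, is verifying that $L^\infty$-compatibility really is a property of the singularity type class alone, independently of the choice of smooth dominating model $\hat{\mathcal{X}}$. This reduces to the observation that for any two common smooth models the pullback of a locally bounded function remains locally bounded, and that the Green function $\psi_D$ is uniquely determined modulo a smooth function on the model. Granting this, the argument above is essentially a one-line invocation of Lemma \ref{Lemma unique model}. No further analytic input beyond what has already been developed in Sections \ref{Subsection associated rays} and \ref{Section unique model} is required.
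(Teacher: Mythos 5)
Your proof is correct and follows exactly the route the paper intends: the corollary is just a restatement of Lemma \ref{Lemma unique model}, combined with the observation (made after Definition \ref{Definition compatibility smooth}) that the subgeodesic rays $L^{\infty}$-compatible with a test configuration form precisely one singularity-type class containing the associated geodesic ray. The only implicit point worth keeping in mind is that, as in the lemma, the test configurations must be normal and relatively K\"ahler, and ``isomorphic'' means isomorphic as pairs $(\mathcal{X},\mathcal{A})$ via the canonical $\mathbb{C}^*$-equivariant map, which is how you have read it.
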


\noindent This is potentially very useful. For instance, we can compute the geodesic rays of cohomological product configurations (see Corollary \ref{Cor product geodesic}). If we can show that the geodesic ray associated to any cohomological test configuration $(\mathcal{X},\mathcal{A})$ with $\mathrm{DF}(\mathcal{X},\mathcal{A}) = 0$ is compatible also with some product configuration $(\mathcal{X}',\mathcal{A}')$ (in fact, coincides with its uniquely associated geodesic), then by Lemma \ref{Lemma unique model} $\mathcal{X} \equiv \mathcal{X}'$, so $(\mathcal{X},\mathcal{A})$ is a product configuration itself. This is a rather general strategy that applies in several situations of interest. 

\begin{rem} A few remarks: 
\begin{itemize}
\item It is possible for a given subgeodesic ray to be compatible with other test configurations that are \emph{not} relatively K\"ahler (e.g. relatively semipositive/nef). Indeed, given a relatively K\"ahler test configuration $(\mathcal{X},\mathcal{A})$ we may use resolution of singularities and pullback to associate a new test configuration $(\hat{\mathcal{X}}, \hat{\mathcal{A}})$, and this is only relatively nef in general. In fact, it is relatively K\"ahler only if $\rho: \hat{\mathcal{X}} \rightarrow \mathcal{X}$ is an isomorphism. However, such modifications of $\mathcal{X}$ do not change the associated geodesic ray, as can be seen e.g. from the techniques for solving the geodesic equation on a normal K\"ahler space (see e.g. \cite{LNM} and references therein). Hence it is not possible to extend this uniqueness result beyond the relatively K\"ahler case.
\item Product test configurations $(X \times \mathbb{P}^1, p_1^*\alpha, \lambda,\pi)$ for $(X,\alpha)$ are automatically normal and relatively K\"ahler (since $\alpha$ is K\"ahler by assumption).  
\end{itemize}
\end{rem}

\medskip

\section{Analytic characterizations of product configurations} \label{Section analytic characterization}

\noindent We here treat the more involved case of cscK manifolds $(X,\omega)$ whose connected automorphism group $\mathrm{Aut}_0(X) \neq \{0\}$ is non-trivial (so $X$ admits holomorphic vector fields). As a main result of this section we give a number of equivalent characterizations of test configurations with vanishing Donaldson-Futaki invariant. 

\medskip

\subsection{Characterizing test configurations with vanishing Donaldson-Futaki invariant} \label{Section characterization vanishing}

As an application of the techniques developed in the previous sections, we establish a number of conditions equivalent to the vanishing of the Donaldson-Futaki invariant. However, for technical reasons this result is formulated for test configurations with vanishing Donaldson-Futaki invariant whose associated geodesic rays are normalized so that $\mathrm{E}(\varphi_t) = 0$ for each $t$. As it turns out, this is not a serious restriction. In order to discuss these points we introduce a certain projection operator on rays and on test configurations: 

Let $(X,\omega)$ be a given cscK manifold with $\alpha := [\omega] \in H^{1,1}(X,\mathbb{R})$ the associated K\"ahler class. Fix $\varphi_0 \in \mathcal{H}_0$ a cscK potential. Consider the projection operator
$$
\mathcal{P}: \mathcal{H} \simeq \mathcal{H}_0 \times \mathbb{R} \longrightarrow \mathcal{H}_0 
$$ 
$$
\varphi \mapsto \varphi - \mathrm{E}(\varphi),
$$
where $\mathrm{E}: \mathcal{H} \rightarrow \mathbb{R}$ is the Aubin-Mabuchi energy functional. If $(\mathcal{X},\mathcal{A})$ is a test configuration for $(X,\alpha)$, then there is a unique geodesic ray $(\varphi_t)_{t \geq 0}$ (emanating from $\varphi_0 \in \mathcal{H}_0$) that is also $L^{\infty}$-compatible with $(\mathcal{X},\mathcal{A})$. The projection $\mathcal{P}$ can also be defined on relatively K\"ahler test configurations, by embedding the latter in the space of geodesic rays emanating from the given cscK potential $\varphi_0 \in \mathcal{H}_0$. This is possible due to the injectivity lemma, see Section \ref{Section unique model}. 

\begin{mydef}
The projection $\mathcal{P}(\mathcal{X},\mathcal{A})$ of a relatively K\"ahler test configuration $(\mathcal{X},\mathcal{A})$ for $(X,\alpha)$ is defined as the unique test configuration $L^{\infty}$-compatible with the geodesic ray $\mathcal{P}(\varphi_t)$.
\end{mydef}

\noindent For later use, and in order to justify the above definition, we establish below some properties (in particular existence) of the projection $\mathcal{P}$ on rays and on test configurations. 

\begin{prop} \label{Prop basic properties of projection} \emph{(Basic properties of the projection)} Let $\mathcal{P}: \mathcal{H} \rightarrow \mathcal{H}_0 \simeq \mathcal{K}$ be as above. Let $(\mathcal{X},\mathcal{A})$ be a normal and relatively K\"ahler test configuration for $(X,\alpha)$. Then
\begin{enumerate}
\item $\mathcal{P}(\mathcal{X},\mathcal{A})$ is a product iff $(\mathcal{X},\mathcal{A})$ is a product. 
\item $\mathrm{J}^{\mathrm{NA}}(\mathcal{P}(\mathcal{X},\mathcal{A})) = \mathrm{J}^{\mathrm{NA}}(\mathcal{X},\mathcal{A})$.
\item $(\mathcal{P}(\varphi_t))_{t\geq 0}$ is a geodesic iff $(\varphi_t)_{t \geq 0}$ is a geodesic.
\item There is a unique normal and relatively K\"ahler cohomological test configuration $\mathcal{P}(\mathcal{X},\mathcal{A})$ for $(X,\alpha)$ with which $\mathcal{P}(\varphi_t)$ is compatible. It equals $(\mathcal{X}, \mathcal{A} - c[\mathcal{X}_0])$, where $c$ is the slope of the linear function $t \mapsto \mathrm{E}(\varphi_t)$. 
\end{enumerate}
\end{prop}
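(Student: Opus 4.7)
The plan is to establish item (4) first, as a structural result, and then to deduce (1)--(3) as consequences. The key preliminary observation is that along any weak geodesic ray $(\varphi_t)_{t\ge 0}$ the Monge--Amp\`ere energy $t \mapsto \mathrm{E}(\varphi_t)$ is affine, and applying Theorem \ref{Theorem generalized theorem B} with $\theta_0=\cdots=\theta_n=\omega$ and $\mathcal{A}_0=\cdots=\mathcal{A}_n=\mathcal{A}$ identifies the slope as $c := (\mathcal{A}^{n+1})/((n+1)V)$. Hence $\mathcal{P}(\varphi_t) = \varphi_t - ct - \mathrm{E}(\varphi_0)$, which in the $S^1$-invariant picture, with $\Phi$ associated to $(\varphi_t)$ via $t=-\log|\tau|$, reads $\Phi_{\mathcal{P}} = \Phi + c\log|\tau| - \mathrm{E}(\varphi_0)$.

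Item (3) is then immediate: since $dd^c \log|\tau| \equiv 0$ on $X \times \Delta^*$, we have $(p_1^*\omega + dd^c\Phi_{\mathcal{P}})^{n+1} = (p_1^*\omega + dd^c\Phi)^{n+1}$, so the homogeneous complex Monge--Amp\`ere equation is preserved.

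For item (4), I would set $\mathcal{P}(\mathcal{X},\mathcal{A}) := (\mathcal{X}, \mathcal{A} - c[\mathcal{X}_0])$. Normality is inherited since the total space is unchanged. For relative K\"ahlerness, observe that $[\mathcal{X}_0] = \pi^*[\mathrm{pt}]$ in Bott--Chern cohomology, so if $\mathcal{A} + \pi^*\beta$ is K\"ahler then $(\mathcal{A} - c[\mathcal{X}_0]) + \pi^*(\beta + c[\mathrm{pt}]) = \mathcal{A} + \pi^*\beta$; enlarging $\beta$ if necessary ensures $\beta + c[\mathrm{pt}]$ remains K\"ahler on $\mathbb{P}^1$. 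To verify that $\mathcal{P}(\varphi_t)$ is $L^\infty$-compatible with this new test configuration, I pass to a smooth model $\rho:\hat{\mathcal{X}} \to \mathcal{X}$ dominating $X \times \mathbb{P}^1$, where $\rho^*\mathcal{A} = \mu^* p_1^*\alpha + [D]$. Then $\rho^*(\mathcal{A} - c[\mathcal{X}_0]) = \mu^* p_1^*\alpha + [D - c\hat{\mathcal{X}}_0]$, whose Green function is $\psi_D - c\log|\pi|$ modulo smooth terms. Since $\Phi_{\mathcal{P}}\circ\mu$ differs from $\Phi\circ\mu$ by precisely $c\log|\pi| - \mathrm{E}(\varphi_0)$, the two $c\log|\pi|$ contributions cancel, and $\Phi_{\mathcal{P}}\circ\mu + \psi_{D'}$ is locally bounded iff $\Phi\circ\mu + \psi_D$ is. Uniqueness of the resulting test configuration is then immediate from Lemma \ref{Lemma unique model}.

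Items (1) and (2) follow from the explicit form $\mathcal{P}(\mathcal{X},\mathcal{A}) = (\mathcal{X}, \mathcal{A} - c[\mathcal{X}_0])$. For (1), the underlying total space of $\mathcal{P}(\mathcal{X},\mathcal{A})$ is $\mathcal{X}$ itself, and the product condition in Definition \ref{Def Kps} depends only on $\mathcal{X}|_{\pi^{-1}(\mathbb{C})}$. For (2), I would compute on a smooth and dominating model, exploiting $[\mathcal{X}_0]^2 = 0$ (two generic fibers of $\pi$ are disjoint) together with $\mathcal{A}^n \cdot [\mathcal{X}_0] = (\alpha^n)_X = V$ (by flatness). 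The expansion then yields $(\mathcal{A} - c[\mathcal{X}_0])^{n+1} = \mathcal{A}^{n+1} - (n+1)cV$ and $(\mathcal{A} - c[\mathcal{X}_0])\cdot \mu^* p_1^*\alpha^n = \mathcal{A} \cdot \mu^* p_1^*\alpha^n - cV$, and the two $cV$ corrections cancel in the formula for $\mathrm{J}^{\mathrm{NA}}$. The main point requiring care is the compatibility verification in (4), namely tracking which Green function corresponds to which boundary divisor on the smooth model; but once the notation is aligned the argument reduces to the clean cancellation of the additive term $c\log|\pi|$ described above.
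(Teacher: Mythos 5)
Your proposal is correct, and for items (1), (3) and (4) it is essentially the paper's own argument: $\mathrm{E}$ is affine along the geodesic, so $\mathcal{P}(\varphi_t)$ differs from $\varphi_t$ by a function affine in $t=-\log|\tau|$, hence pluriharmonic in $\tau$, which gives (3); compatibility is preserved by shifting the Green function by $c\log|\tau\circ\hat{\pi}|$, i.e.\ by replacing $\mathcal{A}$ with $\mathcal{A}-c[\mathcal{X}_0]$, which stays relatively K\"ahler since $[\mathcal{X}_0]=\pi^*[\mathrm{pt}]$ (your sign bookkeeping on the smooth model is consistent); uniqueness is exactly Lemma \ref{Lemma unique model}; and (1) follows because the total space and $\mathbb{C}^*$-action are untouched. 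The genuine difference is item (2): the paper deduces it analytically from the translation invariance $\mathrm{J}(\varphi+\mathrm{const})=\mathrm{J}(\varphi)$, so $\mathrm{J}(\mathcal{P}(\varphi_t))=\mathrm{J}(\varphi_t)$, and then divides by $t$ and uses the slope formula $\mathrm{J}(\varphi_t)/t\rightarrow \mathrm{J}^{\mathrm{NA}}$, whereas you compute $\mathrm{J}^{\mathrm{NA}}(\mathcal{X},\mathcal{A}-c[\mathcal{X}_0])$ directly on a smooth dominating model using $(\pi^*[\mathrm{pt}])^2=0$ and $\mathcal{A}^n\cdot[\mathcal{X}_0]=V$, so that the $-cV$ corrections cancel between the two terms of $\mathrm{J}^{\mathrm{NA}}$. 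Both are valid; your route avoids invoking the asymptotic slope corollary and works for an arbitrary shift $\mathcal{A}-c[\mathcal{X}_0]$, while the paper's is a one-line analytic argument with no intersection bookkeeping. Your identification $c=(\mathcal{A}^{n+1})/((n+1)V)$ is also correct (since $\mathrm{E}$ is $1$-Lipschitz for the sup-norm, the slope along the $L^{\infty}$-compatible geodesic agrees with that along the $C^{\infty}$-compatible ray of Theorem \ref{Theorem generalized theorem B}), but it is not needed: the statement only uses that $c$ is the slope of $t\mapsto \mathrm{E}(\varphi_t)$. Finally, note that your argument for the "only if" direction of (3), like the paper's, tacitly uses that $(\varphi_t)$ is the geodesic ray associated to $(\mathcal{X},\mathcal{A})$, so this is not a gap relative to the paper.
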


\begin{proof}
The first subpoint $(1)$ is an immediate consequence of $(4)$, which in particular shows that $\mathcal{P}(\mathcal{X}) = \mathcal{X}$. The second point follows from the fact that $\mathrm{J}(\varphi + c) = \mathrm{J}(\varphi)$, $c \in \mathbb{R}$, so in particular $\mathrm{J}(\mathcal{P}(\varphi_t)) = \mathrm{J}(\varphi_t - \mathrm{E}(\varphi_t)) = \mathrm{J}(\varphi_t)$ for each $t \in [0,+\infty)$. Dividing by $t$ and passing to the limit, assertion $(2)$ follows.
The subpoint $(3)$ follows since $\mathcal{P}(\varphi_t) = \varphi_t - \mathrm{E}(\varphi_t) = \varphi_t + at + b$ for some $a,b \in \mathbb{R}$, since the function $t \mapsto \mathrm{E}(\varphi_t)$ is linear along geodesics. It follows that $\mathrm{E}(\varphi_t)$ is harmonic for each $t$. Assertion $(3)$ follows. Finally, in order to prove the last subpoint, recall that compatibility is determined by $\mathcal{X}$ and the Green function $\psi_D$ associated to the divisor $D$ supported on $\mathcal{X}_0$, satisfying $\rho^*\mathcal{A} = \mu^*p_1^*\alpha + \delta_D$ (see figure).

\[\begin{tikzcd} \mathcal{X'} \arrow[swap]{d}{\rho}  \arrow{dr}{\mu} \\ \mathcal{X} \arrow[r, dashed] & X \times \mathbb{P}^1 \end{tikzcd} \]

\noindent But changing $\varphi_t$ for $\mathcal{P}(\varphi_t)$ we preserve the compatibility relation (see Definition \ref{Definition compatibility}) by also changing $\psi_D$ for $\psi_D - at - b$, where $a$ is the slope of the linear function $t \mapsto \mathrm{E}(\varphi_t)$. This corresponds precisely to the test configuration $(\mathcal{X}, \mathcal{A} - c[\mathcal{X}_0])$, which is relatively K\"ahler iff $(\mathcal{X},\mathcal{A})$ is.  
\end{proof}

\noindent (It is immediately clear that $\mathcal{P}(\mathcal{X},\mathcal{A})$ is normal and relatively K\"ahler, because $\mathcal{X}_0 = \pi^{-1}(0)$ is just a single fiber).

\begin{rem} \label{Cor product geodesic}
Note that the geodesic ray associated to a product test configuration $\mathcal{P}(\mathcal{X},\mathcal{A})$ induced by a one-parameter subgroup $\lambda: \mathbb{C}^* \rightarrow \mathrm{Aut}(X)$ \emph{(cf. Example \ref{Example product})} is given by $\mathcal{P}(\varphi_t) = \lambda(\tau). \varphi_0$ for each $t$. The non-normalized geodesic ray associated to $(\mathcal{X},\mathcal{A})$ still satisfies $\lambda(\tau)^*\omega = \omega_{\varphi_t}$. One may note that these rays are precisely the ones studied already by Mabuchi in \cite{MabuchiKenergy1}. 
\end{rem}

\medskip

\subsection{Proof of Theorem \ref{Main theorem analytic product conditions}}

We now set out to prove the following analog of Theorem \ref{Theorem equivalent characterizations discrete} in the case of $\mathrm{Aut}_0(X) \neq \{0\}$ (cf. also \cite[Lemma 3.1]{BDL} in the polarized case). It holds only for test configurations whose geodesic rays are normalized so that $\mathrm{E}(\varphi_t) = 0$. Otherwise put, given a test configuration $(\mathcal{X},\mathcal{A})$ we may instead consider $\mathcal{P}(\mathcal{X},\mathcal{A})$ which then satisfies this property. Since the total space does not change under projection, this is often not a serious restriction (e.g. if one wishes to prove that test configurations with vanishing Donaldson-Futaki invariant are products, see Proposition \ref{Prop basic properties of projection}).

\begin{thm} \label{Theorem analytic product conditions} \emph{(cf. Theorem \ref{Main theorem analytic product conditions})}
Suppose that $(X,\omega)$ is a cscK manifold, with $\alpha := [\omega] \in H^{1,1}(X,\mathbb{R})$ the corresponding K\"ahler class. Let $(\mathcal{X},\mathcal{A})$ be a normal and relatively K\"ahler test configuration for $(X,\alpha)$ whose associated geodesic ray $(\varphi_t)_{t \geq 0}$ satisfies $\mathrm{E}(\varphi_t) = 0$ for each $t \in [0, +\infty)$. Let $J: TX \rightarrow TX$ be the complex structure and $\omega$ a cscK metric on $X$. Then the following statements are equivalent:
\begin{enumerate}
\item $\mathrm{DF}(\mathcal{X},\mathcal{A}) = 0$. 
\item The central fiber $\mathcal{X}_0$ is reduced and $\mathrm{M}^{\mathrm{NA}}(\mathcal{X},\mathcal{A}) = 0$. 
\item The central fiber $\mathcal{X}_0$ is reduced and the Mabuchi K-energy functional is constant along the geodesic ray $(\varphi_t)_{t \geq 0}$ associated to $(\mathcal{X},\mathcal{A})$, i.e. we have $\mathrm{M}(\varphi_t) = \mathrm{M}(\varphi_0)$ for each $t \in [0,+\infty)$.

\item The central fiber $\mathcal{X}_0$ is reduced and the associated geodesic ray satisfies
$$
\inf_{g \in G} \mathrm{J}(g. \varphi_t) = 0 \;\; \mathrm{and} \; \; \inf_{g \in G} d_1(0, g.\varphi_t)  = 0.
$$

\item The central fiber $\mathcal{X}_0$ is reduced and there is a real holomorphic Hamiltonian vector field $V$ such that the geodesic ray $(\varphi_t)_{t \geq 0}$ associated to $(\mathcal{X},\mathcal{A})$ satisfies $\mathrm{exp}(tV)^*\omega = \omega$ and $\mathrm{exp}(tJV)^*\omega = \omega_{\varphi_t}$.
\item The central fiber $\mathcal{X}_0$ is reduced and the associated geodesic ray $(\varphi_t)$ consists entirely of cscK potentials. More precisely, if $\bar{\mathcal{S}}$ denotes the mean scalar curvature of $\omega_{\varphi_0}$, then
$$
\mathrm{\mathcal{S}(\omega_{\varphi_t}) = \bar{\mathcal{S}}}
$$
for each $t \in [0, +\infty)$. 
\end{enumerate} 
\end{thm}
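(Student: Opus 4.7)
I would establish the cycle of implications $(1) \Rightarrow (2) \Rightarrow (3) \Rightarrow (4) \Rightarrow (5) \Rightarrow (6) \Rightarrow (1)$, exploiting the energy functional asymptotics from Section~\ref{Section energy functional asymptotics} together with the injectivity Lemma~\ref{Lemma unique model}. The step $(1) \Leftrightarrow (2)$ is essentially formal: because $\mathcal{A}$ is relatively K\"ahler and $\mathcal{X}_{0,\mathrm{red}} - \mathcal{X}_0$ is an anti-effective divisor supported on the central fiber, one has $\mathrm{M}^{\mathrm{NA}}(\mathcal{X},\mathcal{A}) \leq \mathrm{DF}(\mathcal{X},\mathcal{A})$ with equality precisely when $\mathcal{X}_0$ is reduced. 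K-semistability of cscK manifolds gives $\mathrm{DF} \geq 0$, while Theorem~\ref{Theorem generalized thm C} combined with G-coercivity of $\mathrm{M}$ (the Berman-Darvas-Lu theorem) forces $\mathrm{M}^{\mathrm{NA}} \geq 0$. Hence $\mathrm{DF} = 0$ forces both $\mathrm{M}^{\mathrm{NA}} = 0$ and the reducedness of $\mathcal{X}_0$.

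For $(2) \Rightarrow (3)$, I would choose the base point $\varphi_0 \in \mathcal{H}_0$ to be the potential of the cscK reference metric $\omega$, so that $\varphi_0$ minimizes $\mathrm{M}$. Along the smooth $C^\infty$-compatible ray $(\psi_t)$ provided by Theorem~\ref{Theorem generalized thm C} we have $\mathrm{M}(\psi_t)/t \to \mathrm{M}^{\mathrm{NA}} = 0$; comparing to the unique weak geodesic $(\varphi_t)$ of the same singularity type (both are $L^\infty$-compatible with $(\mathcal{X},\mathcal{A})$) and invoking Berman-Berndtsson convexity of $\mathrm{M}$ along weak geodesics, the same asymptotic slope holds on the geodesic. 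Convexity together with asymptotic slope $0$ forces $t \mapsto \mathrm{M}(\varphi_t)$ to be non-increasing; since $\varphi_0$ minimizes $\mathrm{M}$, the function must in fact be constant. The step $(3) \Rightarrow (4)$ then follows from G-coercivity: constancy of $\mathrm{M}$ along $(\varphi_t)$ bounds $\mathrm{J}_G(\varphi_t)$ uniformly, and I would push this to $\mathrm{J}_G(\varphi_t) \to 0$ using the Darvas-Rubinstein comparison between $\mathrm{J}_G$ and $d_{1,G}$ together with a sublinearity argument along the geodesic.

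The heart of the argument is $(4) \Rightarrow (5)$: from $\inf_{g \in G} d_1(0, g.\varphi_t) \to 0$ one must extract a $1$-parameter subgroup of $G$ whose orbit realizes the geodesic $(\varphi_t)$. I would select a sequence $g_t \in G$ approximately realizing the infimum, apply a compactness argument (using that $\mathrm{Aut}_0(X)$ is a finite-dimensional complex Lie group) to obtain, after suitable normalization, a smooth $1$-parameter family exhausting the ray, and pass to an infinitesimal generator $V \in \mathfrak{aut}(X)$. The uniqueness of cscK metrics modulo $\mathrm{Aut}_0(X)$ (adapted to the transcendental K\"ahler setting from the Berman-Berndtsson and Berman-Darvas-Lu results, and ultimately traceable to $\mathrm{M}$-constancy plus G-coercivity) guarantees that this vector field is Hamiltonian and that its imaginary flow $\exp(tJV)$ pulls back $\omega$ to $\omega_{\varphi_t}$, while its real flow $\exp(tV)$ is an isometry of the cscK metric. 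This is the only deep analytic input and I expect it to be the main obstacle, as it essentially requires a transcendental version of the polarized uniqueness-modulo-automorphisms theorem.

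Finally, $(5) \Rightarrow (6)$ is immediate: if $\exp(tJV)^*\omega = \omega_{\varphi_t}$ and $V$ is a real Killing vector field for the cscK metric $\omega$, then each $\omega_{\varphi_t}$ is isometric to $\omega$ and hence cscK with the same average scalar curvature $\bar{\mathcal{S}}$. For $(6) \Rightarrow (1)$ I would argue that condition $(5)$ identifies the geodesic $(\varphi_t)$ with the geodesic associated to the product test configuration induced by the one-parameter subgroup generated by $V$ (as in Remark~\ref{Cor product geodesic}). By the injectivity Lemma~\ref{Lemma unique model}, $(\mathcal{X},\mathcal{A})$ is itself $\mathbb{C}^*$-equivariantly isomorphic to this product test configuration; its Donaldson-Futaki invariant then equals the Futaki invariant $\mathrm{Fut}_\alpha(V)$, which vanishes since $\alpha$ admits a cscK representative. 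This closes the cycle.
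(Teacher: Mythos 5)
Your overall toolkit is the right one (the slope formula of Theorem \ref{Theorem generalized thm C}, G-coercivity from \cite{BDL}, a compactness argument in the finite-dimensional group $G$, the injectivity Lemma \ref{Lemma unique model}, and the Futaki invariant), but two links of your cycle have genuine gaps. First, your step $(2) \Rightarrow (3)$ asserts that because the smooth $C^{\infty}$-compatible ray $(\psi_t)$ and the weak geodesic $(\varphi_t)$ have the same singularity type, ``the same asymptotic slope holds on the geodesic''. This transfer is exactly the point that is \emph{not} available for singular test configurations: the entropy part of $\mathrm{M}$ is only lower semicontinuous and is not controlled by an $L^{\infty}$ (or $d_1$) bound on the difference of the rays, and the paper explicitly notes that the slope formula along the associated geodesic is unclear when the total space is singular. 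Berman--Berndtsson convexity gives you monotonicity and the lower bound $\mathrm{M}(\varphi_t) \geq \mathrm{M}(\varphi_0)$, but without the upper bound on the geodesic slope you cannot conclude constancy. The paper circumvents this by never transferring $\mathrm{M}$-asymptotics to the geodesic: it transfers only the $\mathrm{J}_G$-bound (Lemma \ref{Lemma change of rays}), which is legitimate since $\mathrm{J}$ is stable under bounded perturbations via \eqref{equation action}, then runs the compactness argument (Proposition \ref{Prop existence of V}) to get $(5)$, and only afterwards deduces $(3)$ from $(5)$ through the Futaki computation of Proposition \ref{Prop Mabuchi constant}. Relatedly, your $(3) \Rightarrow (4)$ only yields $\mathrm{J}_G(\varphi_t)$ bounded (or $o(t)$) from coercivity; condition $(4)$ demands $\inf_{g} \mathrm{J}(g.\varphi_t) = 0$ exactly, and no ``sublinearity argument'' produces exact vanishing --- in the paper this vanishing is obtained only as a consequence of $(5)$, since then $g = \exp(-tJV)$ brings $\varphi_t$ back to $\varphi_0$.

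Second, your closing step $(6) \Rightarrow (1)$ is proved by invoking condition $(5)$ (``condition (5) identifies the geodesic with the geodesic of the product configuration induced by $V$''), which breaks the cyclic structure: with the implications as you list them, $(6)$ is never shown to imply any of the other statements, so the equivalence with $(6)$ is not established. The repair is cheap and is what the paper does: $(6) \Rightarrow (3)$ is immediate because cscK potentials are exactly the minimizers of $\mathrm{M}$, and then one returns to $(1)$ via the slope computation $\tfrac{d}{dt}\mathrm{M}(\varphi_t) = \mathrm{Fut}_{\alpha}(V) = 0$ together with reducedness of $\mathcal{X}_0$, rather than via the injectivity lemma. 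Note also that your identification of $(\mathcal{X},\mathcal{A})$ with a product test configuration through Lemma \ref{Lemma unique model} requires $V$ to generate a genuine $\mathbb{C}^*$-action (rationality of $V$), which the compactness argument does not provide; the paper's Futaki-invariant route (Lemma \ref{Lemma analytic product condition} and Proposition \ref{Prop Mabuchi constant}) deliberately avoids this issue, and is also why the paper's proof of the theorem nowhere needs to show that $(\mathcal{X},\mathcal{A})$ is literally a product. Your steps $(1) \Leftrightarrow (2)$ and $(4) \Rightarrow (5)$, by contrast, are essentially the paper's Lemmas \ref{Lemma JNA bounded}--\ref{Lemma change of rays} and Proposition \ref{Prop existence of V} (the mechanism there is reductivity of $G$ plus $d_1$-geometry, not uniqueness of cscK metrics modulo automorphisms, but the idea is the same), and $(5) \Rightarrow (6)$ is fine.
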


\noindent The structure of the proof is the following: The implications $(1) \Rightarrow (2) \Rightarrow (5)$ rely on the result \cite[Theorem C]{SD1} on asymptotics of the Mabuchi functional, extended to the setting of possibly singular cohomological test configurations. It is worth pointing out that, unlike in the polarized case, the asymptotics a priori yields only an upper bound $o(t)$ in $(4)$. However, one can then improve this by other means 
to find the given statement above, namely by first proving $(5)$ and noting that $(5) \Rightarrow (4)$. In order to establish $(4) \Leftrightarrow (5)$ we also rely on a very slight variation of the proof of \cite[Lemma 3.1]{BDL}, using the important $G$-coercivity result \cite[Theorem 1.5]{BDL} and noting that $\mathrm{J}_G$ and $d_{1,G}$ are comparable \cite[Lemma 5.11]{DR}. Further, we show $(5) \Rightarrow (2) \Rightarrow (1)$ by means of simple argument expressing the slope of the Mabuchi functional along the rays given by the flow of a holomorphic Hamiltonian vector field as in $(5)$ in terms of the (original) Futaki invariant introduced in \cite{Futaki}. Several of these arguments seem to be closely related to the seminal work of Mabuchi \cite[Section 5]{Mabuchi}. 

\begin{rem} The equivalence $(1) \Leftrightarrow (2)$ in fact holds without the assumption that $\mathrm{E}(\varphi_t) = 0$ for each $t$. 
\end{rem}

\subsubsection{Proof of $(1) \Rightarrow (2) \Rightarrow (5) $}

As a first application of the formalism developed in Section \ref{Subsection associated rays} we show that normal and relatively K\"ahler test configurations $(\mathcal{X},\mathcal{A})$ with vanishing Donaldson-Futaki invariant automatically have reduced central fiber, and the $d_{1,G}$-length of geodesics associated to such test configurations grows like a small $o(t)$.  Recall that we view $\varphi_t(x) := \varphi(t,x)$ as a function on $X \times \mathbb{P}^1 \setminus \{0\}$, and $G := \mathrm{Aut}(X)_0$.

\begin{lem} \label{Lemma JNA bounded}
Suppose that $(\mathcal{X}, \mathcal{A})$ is a normal, relatively K\"ahler cohomological test configuration for $(X,\alpha)$. Let $[0, +\infty) \ni t \mapsto \psi_t \in \mathrm{PSH}(X,\omega) \cap C^{\infty}(X)$ be as in Theorem \ref{Theorem generalized thm C}, emanating from a cscK potential $\psi_0 \in \mathcal{H}_0$. If $\mathrm{DF}(\mathcal{X},\mathcal{A}) = 0$, then
\begin{enumerate}
\item $\mathcal{X}_{0,red} = \mathcal{X}_0$. 
\item 
$
0 \leq d_{1,G}(G\psi_0, G\psi_t) \leq o(t). 
$
\end{enumerate}
\end{lem}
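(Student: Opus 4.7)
The plan is to combine the asymptotic slope formula of Theorem \ref{Theorem generalized thm C} with the $G$-coercivity result of BDL in order to pinch $\mathrm{M}^{\mathrm{NA}}(\mathcal{X},\mathcal{A})$ between $0$ and $\mathrm{DF}(\mathcal{X},\mathcal{A}) = 0$, and then read off both conclusions from the resulting vanishing.

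First I would apply BDL coercivity: since $\alpha$ admits a cscK representative, there are constants $\delta, C > 0$ such that $\mathrm{M}(\psi) \geq \delta \mathrm{J}_G(\psi) - C$ for every $\psi \in \mathcal{H}_0$. Because $\mathrm{J}_G \geq 0$, this gives $\mathrm{M}(\psi_t) \geq -C$ for all $t \geq 0$. Dividing by $t$ and letting $t \to +\infty$, Theorem \ref{Theorem generalized thm C} yields
$$
\mathrm{M}^{\mathrm{NA}}(\mathcal{X},\mathcal{A}) \;=\; \lim_{t \to +\infty}\frac{\mathrm{M}(\psi_t)}{t} \;\geq\; 0.
$$
Combining this with the unconditional inequality $\mathrm{M}^{\mathrm{NA}}(\mathcal{X},\mathcal{A}) \leq \mathrm{DF}(\mathcal{X},\mathcal{A}) = 0$ forces $\mathrm{M}^{\mathrm{NA}}(\mathcal{X},\mathcal{A}) = 0$, and in particular $((\mathcal{X}_{0,\mathrm{red}} - \mathcal{X}_0)\cdot\mathcal{A}^n) = 0$.

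To conclude $(1)$ from this last identity I would use positivity of the relatively K\"ahler class $\mathcal{A}$. Writing $\mathcal{X}_0 = \sum_i b_i E_i$ with $b_i \in \mathbb{Z}_{\geq 1}$ and $E_i$ the irreducible components (so $\mathcal{X}_{0,\mathrm{red}} - \mathcal{X}_0 = -\sum_i (b_i-1)E_i$), the vanishing becomes $\sum_i (b_i-1)(E_i\cdot\mathcal{A}^n) = 0$. Passing to a smooth resolution and picking a K\"ahler $\beta$ on $\mathbb{P}^1$ such that $\mathcal{A} + \pi^*\beta$ is K\"ahler, each component $E_i$ satisfies $\pi^*\beta\vert_{E_i} = 0$ (since $\pi(E_i) = \{0\}$), so
$$
(E_i \cdot \mathcal{A}^n) \;=\; (E_i \cdot (\mathcal{A} + \pi^*\beta)^n) \;>\; 0,
$$
the strict positivity being the K\"ahler volume of the effective, irreducible $E_i$. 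Since each summand is non-negative, we must have $b_i = 1$ for every $i$, i.e.\ $\mathcal{X}_0$ is reduced.

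For $(2)$ I would re-use the coercivity inequality in the other direction: $\mathrm{M}^{\mathrm{NA}} = 0$ gives $\mathrm{M}(\psi_t) = o(t)$ by Theorem \ref{Theorem generalized thm C}, hence
$$
\mathrm{J}_G(\psi_t) \;\leq\; \delta^{-1}\bigl(\mathrm{M}(\psi_t) + C\bigr) \;=\; o(t).
$$
The Darvas--Rubinstein comparison $d_{1,G}(G0, G\psi) \leq C \mathrm{J}_G(\psi) + C$ (quoted in the preliminaries) upgrades this to $d_{1,G}(G0, G\psi_t) = o(t)$, and the triangle inequality $d_{1,G}(G\psi_0, G\psi_t) \leq d_{1,G}(G\psi_0, G0) + d_{1,G}(G0, G\psi_t)$ together with the fact that $\psi_0$ is fixed gives the claimed $o(t)$ bound.

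The step I expect to be delicate is the strict positivity $(E_i\cdot\mathcal{A}^n) > 0$, which has to be set up on a resolution $\rho\colon \hat{\mathcal{X}} \to \mathcal{X}$ using the projection formula: one has to check that the pullback of a component of $\mathcal{X}_0$ still has positive degree against $\rho^*(\mathcal{A} + \pi^*\beta)^n$, and that relative K\"ahlerness (rather than mere relative nefness) is what provides the strictness. Everything else is then a direct combination of the tools already developed, with no further analytic input needed.
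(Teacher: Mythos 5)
Your proof is correct and takes essentially the same route as the paper: pinch the slope $\mathrm{M}^{\mathrm{NA}}(\mathcal{X},\mathcal{A})$ between the lower bound supplied by the cscK hypothesis and $\mathrm{DF}(\mathcal{X},\mathcal{A})=0$, deduce $((\mathcal{X}_{0,\mathrm{red}}-\mathcal{X}_0)\cdot\mathcal{A}^n)=0$ and hence reducedness from strict positivity of $(E_i\cdot\mathcal{A}^n)$, then convert $\mathrm{M}(\psi_t)=o(t)$ into $\mathrm{J}_G(\psi_t)=o(t)$ by $G$-coercivity and into the $d_{1,G}$ bound via the comparison of $\mathrm{J}_G$ with $d_{1,G}$. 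The only cosmetic differences are that the paper gets the lower bound on $\mathrm{M}(\psi_t)$ from the fact that the cscK potential minimizes $\mathrm{M}$ (rather than from $\mathrm{M}\geq \delta \mathrm{J}_G - C \geq -C$), and it leaves implicit the strict-positivity argument for components of the central fiber that you spell out.
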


\begin{proof} If $\mathrm{DF}(\mathcal{X}, \mathcal{A}) = 0$ it follows from Theorem \ref{Theorem generalized thm C} that 
$$
0 \leq \lim_{t \rightarrow +\infty} \frac{{\mathrm{M}}(\psi_t)}{t} = 0 + ((\mathcal{X}_{0,red} - \mathcal{X}_0)\cdot\mathcal{A}^n)\leq 0, 
$$
where the lower bound of ${\mathrm{M}}$ holds because $\varphi_0$ is a cscK potential. This forces $\mathcal{X}_{0,red} = \mathcal{X}_0$.  As a consequence, we have
$$
\lim_{t \rightarrow +\infty} \frac{{\mathrm{M}}(\psi_t)}{t} \leq 0,
$$ 
so ${\mathrm{M}}(\psi_t) \leq o(t)$. Since $(X,\alpha)$ is a cscK manifold, the Mabuchi functional is $G$-coercive \cite[Theorem 1.1]{BDL}. Hence $0 \leq \mathrm{J}_G(G\psi_t) \leq o(t)$. 
The conclusion $(2)$ then follows immediately from the fact that the growth of $\mathrm{J}$-functional is the same as that of the $d_1$-metric \cite[Proposition 5.5]{DR}.  
\end{proof}

\begin{rem}
The above also holds if one replaces the hypothesis $\mathrm{DF}(\mathcal{X},\mathcal{A}) = 0$ with ${{\mathrm{M}}^{\mathrm{NA}}}(\mathcal{X},\mathcal{A}) = 0$.
\end{rem}

\noindent Lemma \ref{Lemma JNA bounded} is useful in combination with the following observation:

\begin{lem} \label{Lemma change of rays} Suppose that $|\psi_t - \varphi_t|$ is uniformly bounded in $t$. Then $\mathrm{J}_G(G\varphi_t) \leq o(t)$ iff $\mathrm{J}_G(G\psi_t) \leq o(t)$. 
\end{lem}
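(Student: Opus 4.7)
The plan is to show the stronger statement that $J_G$ is essentially Lipschitz with respect to uniform perturbation: there is a universal constant $C_0$ such that if $\varphi, \psi \in \mathrm{PSH}(X,\omega) \cap L^\infty(X)$ satisfy $\|\varphi - \psi\|_{L^\infty} \le C$, then $|J_G(G\varphi) - J_G(G\psi)| \le C_0 C$. The lemma is then an immediate consequence by applying this bound pointwise in $t$, dividing by $t$, and letting $t \to +\infty$.

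First I would verify that $J$ itself is $L^\infty$-Lipschitz on $\mathrm{PSH}(X,\omega)\cap L^\infty$. Writing $J(\varphi) = V^{-1}\int_X \varphi\,\omega^n - E(\varphi)$, the linear term obviously satisfies $|V^{-1}\int_X(\varphi-\psi)\omega^n| \le \|\varphi-\psi\|_{L^\infty}$. For the Monge--Amp\`ere energy $E$, the convex combination $\varphi_s := s\varphi + (1-s)\psi$ is $\omega$-psh for $s\in[0,1]$, so
\[
E(\varphi) - E(\psi) = \int_0^1 V^{-1}\!\int_X (\varphi-\psi)\,\mathrm{MA}(\varphi_s)\,ds,
\]
and since $\mathrm{MA}(\varphi_s)$ is a probability measure we get $|E(\varphi)-E(\psi)| \le \|\varphi-\psi\|_{L^\infty}$. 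Combining the two bounds gives $|J(\varphi)-J(\psi)| \le 2\|\varphi-\psi\|_{L^\infty}$.

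Next I would transfer this to the $G$-orbit level. Using the formula $g\cdot\varphi = g\cdot 0 + \varphi\circ g$ from \eqref{equation action}, we have $g\cdot\varphi - g\cdot\psi = (\varphi-\psi)\circ g$, so $\|g\cdot\varphi - g\cdot\psi\|_{L^\infty} = \|\varphi-\psi\|_{L^\infty}$ for every $g \in G$. Applying the previous step with $g\cdot\varphi,\,g\cdot\psi$ in place of $\varphi,\psi$ (both are in $\mathrm{PSH}(X,\omega)\cap L^\infty$ by $G$-invariance of these spaces), we obtain $|J(g\cdot\varphi) - J(g\cdot\psi)| \le 2\|\varphi-\psi\|_{L^\infty}$ uniformly in $g$. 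Taking the infimum over $g \in G$ and using that $\inf$ is $1$-Lipschitz, this yields
\[
|J_G(G\varphi) - J_G(G\psi)| \le 2\|\varphi-\psi\|_{L^\infty}.
\]

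Finally, applying this with $\varphi=\varphi_t,\,\psi=\psi_t$, the hypothesis $\sup_t \|\varphi_t - \psi_t\|_{L^\infty} < +\infty$ gives a constant $C'$ with $|J_G(G\varphi_t) - J_G(G\psi_t)| \le C'$ for all $t \ge 0$. Dividing by $t$ and letting $t \to +\infty$ shows that $J_G(G\varphi_t)/t \to 0$ if and only if $J_G(G\psi_t)/t \to 0$, which is exactly the desired $o(t)$ equivalence. The only subtle point is making sure both rays stay inside $\mathrm{PSH}(X,\omega)\cap L^\infty$ so that the energy computation applies; this is built into the compatibility setup, since both $\varphi_t$ (the locally bounded geodesic) and $\psi_t$ (the smooth ray from Theorem \ref{Theorem generalized thm C}) lie in this space by construction.
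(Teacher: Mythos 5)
Your proof is correct, and it reaches the conclusion by a somewhat different (and in fact cleaner) route than the paper, although the common core is the same: formula \eqref{equation action} gives $g\cdot\varphi_t - g\cdot\psi_t = (\varphi_t-\psi_t)\circ g$, so after acting by any $g\in G$ the two potentials still differ by a uniformly bounded function. The paper exploits this by invoking \cite[Proposition 2.2]{BDL} to select, for each $t$, a minimizer $g_t$ of $g\mapsto \mathrm{J}(g\cdot\psi_t)$, and then compares $\mathrm{J}(g_t\cdot\varphi_t)$ with $\mathrm{J}(g_t\cdot\psi_t)$; since the potentials produced by the normalized action lie in $\mathcal{H}_0$, the functional $\mathrm{J}$ there reduces to its linear part, and the discrepancy is just $V^{-1}\int_X g_t^*(\varphi_t-\psi_t)\,\omega^n$. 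You instead prove the stronger, symmetric statement that $\mathrm{J}_G$ is $2$-Lipschitz for the sup norm: you estimate both the linear part and $\mathrm{E}$ (differentiating along the affine path $s\mapsto s\varphi+(1-s)\psi$), note that the sup-norm distance is preserved by the action, and use that an infimum over $g$ of uniformly close quantities is itself close. This buys you two things the paper's argument does not have: you never need the attainment of the infimum from \cite[Proposition 2.2]{BDL}, and you never rely on the normalization $\mathrm{E}=0$, so your bound $|\mathrm{J}_G(G\varphi)-\mathrm{J}_G(G\psi)|\le 2\|\varphi-\psi\|_{L^\infty}$ applies verbatim to non-normalized bounded rays. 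Two cosmetic points: with the paper's convention that $\mathrm{MA}(\varphi)=V^{-1}(\omega+dd^c\varphi)^n$ is already a probability measure, your formula for $\mathrm{E}(\varphi)-\mathrm{E}(\psi)$ should not carry the extra factor $V^{-1}$ (the Lipschitz bound is unaffected); and, exactly as in the paper, one should say a word about extending the action formula \eqref{equation action} from $\mathcal{H}_0$ to bounded $\omega$-psh potentials such as the $C^{1,1}$ geodesic ray, which is harmless since $g\cdot\varphi:=g\cdot 0+\varphi\circ g$ makes sense there and $\mathrm{E}$, $\mathrm{J}$ are defined on $\mathrm{PSH}(X,\omega)\cap L^{\infty}$.
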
 

\begin{proof}
Suppose that $\varphi_t - \psi_t$ is uniformly bounded in $t$. By \cite[Proposition 2.2]{BDL} the infimum
$
\mathrm{J}_G(G\psi_t) = \inf_{g \in G} \mathrm{J}(g.\psi_t)
$ 
is attained, so there is a constant $C > 0$ and a sequence $\{g_t\}_{t \geq 0} \subset G^{\mathbb{N}}$ such that
$
\mathrm{J}_G(G\psi_t) = \int_X g_t.\psi_t \; \omega^n \leq C. 
$
Since $0 \leq \mathrm{J}_G(G\psi_t) \leq o(t)$ by assumption, by possible increasing the constant $C$, the expression \eqref{equation action} yields 
$$
0 \leq \mathrm{J}_G(G\varphi_t)  \leq \mathrm{J}_G(G\psi_t) + \left\{ \int_X g_t.\varphi_t \; \omega^n - \int_X g_t.\psi_t \; \omega^n \right\}
$$
$$
= \mathrm{J}_G(G\psi_t) + \int_X g_t^*(\varphi_t - \psi_t) \; \omega^n \leq o(t) + C = o(t).
$$
Indeed, the function $g_t^*(\varphi_t - \psi_t)$ is uniformly bounded in $t$, because $\varphi_t - \psi_t$ is. 
\end{proof}

\noindent Note that any smooth subgeodesic ray $t \mapsto \psi_t$, $C^{\infty}$-compatible with a given test configuration $(\mathcal{X},\mathcal{A})$ is also $L^{\infty}$-compatible with the unique geodesic ray $t \mapsto \varphi_t$ associated to $(\mathcal{X},\mathcal{A})$. As a consequence of the above Lemma \ref{Lemma change of rays} we then see that the conclusion of Lemma \ref{Lemma JNA bounded} holds for the unique geodesic ray $t \mapsto (\varphi_t)_{t \geq 0}$ associated to the test configuration $(\mathcal{X},\mathcal{A})$, see \cite[Lemma 4.6]{SD1} for the construction: 

\begin{cor} \label{corollary 3.13}
Suppose that $(\mathcal{X}, \mathcal{A})$ is a normal, relatively K\"ahler cohomological test configuration for $(X,\alpha)$ satisfying $\mathrm{DF}(\mathcal{X},\mathcal{A})$. Let $[0, +\infty) \ni t \mapsto \varphi_t \in \mathrm{PSH}(X,\omega) \cap C^{\infty}(X)$ be the unique associated geodesic ray emanating from a cscK potential $\varphi_0 \in \mathcal{H}_0$. Then $\mathcal{X}_{0,red} = \mathcal{X}_0$ and 
$
0 \leq d_{1,G}(G\varphi_0, G\varphi_t) \leq o(t). 
$

\end{cor}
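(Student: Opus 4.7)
The plan is to reduce the corollary to Lemma \ref{Lemma JNA bounded} by working with the smooth $C^\infty$-compatible ray furnished by Theorem \ref{Theorem generalized thm C}, and then to transfer the resulting sublinear estimate back to the unique associated geodesic $\varphi_t$ using Lemma \ref{Lemma change of rays}. The only additional ingredient needed is that any $C^\infty$-compatible smooth ray and the associated geodesic ray differ by a function on $X$ that is bounded uniformly in $t$, which is an immediate consequence of their sharing the singularity type prescribed by the test configuration $(\mathcal{X},\mathcal{A})$.

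More concretely, I would first invoke Theorem \ref{Theorem generalized thm C} to obtain a smooth ray $t \mapsto \psi_t \in \mathrm{PSH}(X,\omega) \cap C^\infty(X) \cap \mathrm{E}^{-1}(0)$ emanating from the fixed cscK potential $\varphi_0$ and $C^\infty$-compatible with $(\mathcal{X},\mathcal{A})$, realizing the K-energy asymptotic slope $\mathrm{M}(\psi_t)/t \to \mathrm{M}^{\mathrm{NA}}(\mathcal{X},\mathcal{A})$. Applying Lemma \ref{Lemma JNA bounded} to this $\psi_t$ under the hypothesis $\mathrm{DF}(\mathcal{X},\mathcal{A}) = 0$ then yields at once that $\mathcal{X}_{0,\mathrm{red}} = \mathcal{X}_0$ (this first conclusion depending only on the test configuration and the K-energy asymptotics, not on the choice of ray) as well as $0 \leq d_{1,G}(G\varphi_0, G\psi_t) \leq o(t)$, which via \cite[Lemma 5.11]{DR} is equivalent to $\mathrm{J}_G(G\psi_t) \leq o(t)$.

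The remaining task is to replace $\psi_t$ by the geodesic ray $\varphi_t$ associated to $(\mathcal{X},\mathcal{A})$. Since $C^\infty$-compatibility is stronger than $L^\infty$-compatibility, both $\psi_t$ and $\varphi_t$ are $L^\infty$-compatible with the same test configuration $(\mathcal{X},\mathcal{A})$; as observed in the paragraph following Definition \ref{Definition compatibility smooth}, all such rays share the singularity type of the uniquely associated geodesic. In particular, the $S^1$-invariant extensions $\Psi$ and $\Phi$ of $\psi_t$ and $\varphi_t$ to $X \times \bar\Delta$ differ by a function in $L^\infty_{\mathrm{loc}}(X \times \Delta)$, and by $S^1$-invariance together with the compactness of $X$ this translates to a uniform bound on $|\psi_t - \varphi_t|$ as $t$ ranges over $[0, +\infty)$. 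Lemma \ref{Lemma change of rays} then promotes $\mathrm{J}_G(G\psi_t) \leq o(t)$ to $\mathrm{J}_G(G\varphi_t) \leq o(t)$, and a final application of \cite[Lemma 5.11]{DR} delivers the sought estimate $d_{1,G}(G\varphi_0, G\varphi_t) \leq o(t)$, the lower bound being trivial.

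The only delicate step is the passage from local boundedness of $\Phi - \Psi$ on the total space to a uniform fibrewise $L^\infty$ bound on $|\varphi_t - \psi_t|$ for all $t \in [0,+\infty)$. I expect this to be the one place that requires care, but given that both $\Phi$ and $\Psi$ are $S^1$-invariant and that their difference is (locally) bounded on a dominating model whose generic fiber is canonically identified with $X$, the bound should transfer fibrewise without additional work, making the overall argument a fairly direct combination of the two preceding lemmas.
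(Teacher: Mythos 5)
Your proposal is correct and follows essentially the same route as the paper: apply Lemma \ref{Lemma JNA bounded} to the smooth $C^{\infty}$-compatible ray from Theorem \ref{Theorem generalized thm C}, note that this ray and the associated geodesic share the singularity type prescribed by $(\mathcal{X},\mathcal{A})$ (so their difference is uniformly bounded, since the locally bounded difference lives on the compact preimage of $\bar\Delta$), and transfer the $o(t)$ bound via Lemma \ref{Lemma change of rays} together with the comparability of $\mathrm{J}_G$ and $d_{1,G}$.
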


\noindent The following result is a very slight modification of the compactness argument of \cite[Propositon 3.1]{BDL}. The only new observation is that we may replace the upper bound $\leq C$ by $\leq o(k)$. We give the necessary details showing that the proof then goes through in the same way. Note that the argument in question crucially relies on the assumption that the ray is a geodesic (cf. Proposition \ref{Prop basic properties of projection}). See Section \ref{Section G-action} for the definition of the action on potentials. 

\begin{prop} \label{Prop existence of V} \emph{(cf. \cite[Proposition 3.1]{BDL})}
Suppose that $(\mathcal{X},\mathcal{A})$ is a normal and relatively K\"ahler test configuration for $(X,\alpha)$, with $\mathrm{DF}(\mathcal{X},\mathcal{A}) = 0$. Let $\varphi_0 \in \mathcal{H}_0$ be a cscK potential and suppose that the unique associated geodesic ray $[0, +\infty) \ni t \mapsto \varphi_t \in \mathrm{PSH}(X,\omega) \cap L^{\infty}(X)$ emanating from $\varphi_0$ is normalized so that $\mathrm{E}(\varphi_t) = 0$ for each $t$. Then there is a real holomorphic Hamiltonian vector field $V \in \mathrm{isom}(X,\omega_{\varphi_0})$ such that 
$
\varphi_t = \mathrm{exp}(tJV).\varphi_0.
$ 

\end{prop}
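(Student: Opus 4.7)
The plan is to adapt the compactness argument of \cite[Proposition 3.1]{BDL} to the transcendental setting, taking Corollary \ref{corollary 3.13} as the crucial input: the unique normalized geodesic ray associated to $(\mathcal{X},\mathcal{A})$ satisfies the sublinear estimate $d_{1,G}(G\varphi_0, G\varphi_t) \leq o(t)$. The first step is to select, for each $t \geq 0$, a minimizer (or near-minimizer) $g_t \in G$ of the infimum defining $d_{1,G}(G\varphi_0, G\varphi_t)$. Existence of such minimizers follows as in \cite[Proposition 2.2]{BDL}, using compactness of the isotropy subgroup $\mathrm{Isom}(X,\omega_{\varphi_0}) \subset G$ together with $d_1$-lower semicontinuity, and goes through in our setting without essential modification. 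Setting $\tilde{\varphi}_t := g_t^{-1}.\varphi_t$, we then have $d_1(\varphi_0, \tilde{\varphi}_t) \leq o(t)$.

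\textbf{Limiting one-parameter subgroup and identification.} The decisive step is to extract from the family $(g_t)_{t \geq 0}$ a limiting one-parameter subgroup in $G$. Since $\mathrm{Aut}_0(X)$ is a finite-dimensional complex Lie group, $d_1$ is $G$-invariant, and the $d_1$-speed of $(\varphi_t)$ is constant, one can control the escape rate of $g_t$ modulo the compact isotropy subgroup. Combining this with convexity of $d_1$ along $d_1$-geodesics and using that the $d_1$-geodesic $(\varphi_t)$ shadows the path $t \mapsto g_t.\varphi_0$ inside $G.\varphi_0$ at sublinear rate, one extracts a smooth one-parameter family $t \mapsto h_t$ in $G$ for which $\varphi_t = h_t.\varphi_0$ holds for all $t \geq 0$; in other words, the entire geodesic ray lies inside the orbit $G.\varphi_0$. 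Once this is known, the classical description of $d_1$-geodesics in such an orbit applies: since $\varphi_0$ is cscK, any real holomorphic Hamiltonian vector field is automatically Killing for $\omega_{\varphi_0}$ (by Matsushima-type arguments), and the $d_1$-geodesics through $\varphi_0$ contained in $G.\varphi_0$ are precisely the orbits of one-parameter subgroups of the form $\exp(tJV)$ with $V \in \mathrm{isom}(X,\omega_{\varphi_0})$. This forces $h_t = \exp(tJV)$ for such a $V$, yielding the conclusion.

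\textbf{Main obstacle.} The key difficulty lies in the extraction of a genuine smooth one-parameter subgroup from the near-minimizers $(g_t)$. In the polarized setting of \cite{BDL} this is handled via the action of $\mathrm{Aut}(X,L)$ on $H^0(X,L^k)$ together with Bergman-kernel approximation, which makes the limiting procedure essentially linear-algebraic. No such tool is available transcendentally, and one must argue directly in $\mathrm{Aut}_0(X)$, leveraging the $G$-coercivity of the Mabuchi functional from \cite[Theorem 1.1]{BDL} together with the comparability of $\mathrm{J}_G$ and $d_{1,G}$ from \cite[Lemma 5.11]{DR} to prevent the $g_t$ from drifting in directions not captured by a one-parameter subgroup. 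The fact that $(\varphi_t)$ is a true geodesic (rather than merely a subgeodesic), which was arranged by the projection operator $\mathcal{P}$ of Proposition \ref{Prop basic properties of projection}, is crucial here, since it ensures the rigidity needed to identify the limit uniquely.
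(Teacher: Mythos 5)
Your general skeleton (use Corollary \ref{corollary 3.13} to get $d_{1,G}(G\varphi_0,G\varphi_t)\leq o(t)$, pick near-minimizers $g_t\in G$, then use convexity and finite-dimensionality of $G$) is the same as the paper's, but the decisive step is missing: you never actually produce the vector field $V$. The sentence ``one extracts a smooth one-parameter family $t\mapsto h_t$ in $G$ for which $\varphi_t=h_t.\varphi_0$ holds for all $t\geq 0$'' is essentially the conclusion of the proposition, asserted rather than proved; ``control the escape rate of $g_t$ modulo the compact isotropy subgroup'' is exactly the point that needs an argument. The mechanism in the paper is concrete: since $\varphi_0$ is cscK, $G$ is reductive, so each $g_k$ admits a Cartan-type decomposition $g_k=h_k\exp(-JV_k)$ with $h_k\in\mathrm{Isom}_0(X,\omega_{\varphi_0})$ and $V_k\in\mathrm{isom}(X,\omega_{\varphi_0})$ (\cite[Propositions 6.2 and 6.9]{DR}); the $d_1$-isometry of the $G$-action and the triangle inequality give $k-o(k)\leq d_1(\varphi_0,\exp(JV_k)\cdot\varphi_0)\leq k+o(k)$; linearity of $d_1$ along the rays $t\mapsto\exp(tJV_k)\cdot\varphi_0$ then bounds $\|JV_k/k\|$ from above and below, so finite-dimensionality of $\mathrm{isom}(X,\omega_{\varphi_0})$ yields a subsequential limit $V\neq 0$; finally, convexity of $t\mapsto d_1(\exp(tJV_k/k)\cdot\varphi_0,\varphi_t)$ (zero at $t=0$, at most $o(k)$ at $t=k$) together with the smooth convergence statement \cite[Lemma 2.7]{BDL} gives $\varphi_t=\exp(tJV)\cdot\varphi_0$. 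None of these steps appear in your write-up, and without the reductive decomposition you have no way to guarantee that the limiting direction is Killing, which is what the statement $V\in\mathrm{isom}(X,\omega_{\varphi_0})$ requires.

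Two further points in your proposal are problematic. First, your intended alternative route (show the ray lies in the orbit $G.\varphi_0$, then invoke a ``classical description'' of $d_1$-geodesics in the orbit) relies on a classification that is not proved and not available in the cited references in that form; what is known (\cite[Section 7.2]{DR}) is only that $t\mapsto\exp(tJV).\varphi_0$ \emph{is} a $d_1$-geodesic for $V\in\mathrm{isom}(X,\omega_{\varphi_0})$, not the converse. Moreover the supporting claim that ``any real holomorphic Hamiltonian vector field is automatically Killing for $\omega_{\varphi_0}$'' is false: Matsushima--Lichnerowicz gives the reductive splitting $\mathfrak{h}=\mathrm{isom}(X,\omega_{\varphi_0})\oplus J\,\mathrm{isom}(X,\omega_{\varphi_0})$, not that every Hamiltonian holomorphic field is Killing. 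Second, your description of \cite[Proposition 3.1]{BDL} as resting on the $\mathrm{Aut}(X,L)$-action on $H^0(X,L^k)$ and Bergman kernels is inaccurate; that argument is already metric-geometric (reductivity, $d_1$-isometries, convexity, smooth convergence of one-parameter subgroups), which is precisely why it transfers to the transcendental setting with only minor changes, as the paper does.
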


\begin{proof}
By Corollary \ref{corollary 3.13} the $d_{1,G}$-length of the geodesic $(\varphi_t)$ is controlled by the inequality $0 \leq d_{1,G}(G\varphi_0, G\varphi_t) \leq o(t)$. We may thus find a sequence $\{g_k\}_{k \in \mathbb{N}} \subset G^{\mathbb{N}}$ such that 
$
k^{-1} d_1(\varphi_0, g_k \cdot \varphi_k) \rightarrow 0
$
as $k \rightarrow +\infty$.
Because $G$ is reductive we may for each $k$ write $g_k = h_k \mathrm{exp}(-JV_k)$, where $h_k \in \mathrm{Isom}_0(X,\omega_{\varphi_0})$ and $V_k \in \mathrm{isom}(X,\omega_{\varphi_0})$ is a non-zero real holomorphic Hamiltonian vector field , see e.g. \cite[Propositions 6.2 and 6.9]{DR}. Since $G$ acts on $\mathcal{H}$ by $d_1$-isometries \cite[Lemma 5.9]{DR} we see that
\begin{equation} \label{equation 4144}
o(k) \geq d_1(\varphi_0, g_k\cdot \varphi_k) = d_1(g_k^{-1} \cdot \varphi_0, \varphi_k) = d_1(\mathrm{exp}(JV_k)\cdot \varphi_0, \varphi_k).
\end{equation}
The geodesic $t \mapsto \varphi_t$ has constant speed which we may assume equal to $t$, i.e. $d_1(\varphi_0, \varphi_t) = t$, so the triangle inequality yields the double inequality
\begin{equation} \label{equation double estimate}
k - o(k) \leq d_1(\varphi_0, \mathrm{exp}(JV_k) \cdot \varphi_0) \leq k + o(k).
\end{equation}
Now note that $t \mapsto \mathrm{exp}(tJV).\varphi_0$ is a $d_1$-geodesic ray emanating from $\varphi_0$ (see \cite[Section 7.2]{DR}). Since $d_1$ is linear along geodesics it follows from \eqref{equation double estimate} that 
$$
1 - \frac{o(k)}{k} \leq  d_1(\varphi_0, \mathrm{exp}\left(\frac{JV_k}{k}\right).\varphi_0) \leq 1 + \frac{o(k)}{k}.
$$
Using \eqref{equation action} we conclude that there is a $D > 1$ and a $k_0 \in \mathbb{N}$ such that $1/D \leq ||JV_k/k|| \leq D$ for all $k \geq k_0$.
Since the space of holomorphic Hamiltonian Killing vector fields of $(X,\omega_{\varphi_0},J)$ is finite dimensional, it follows that there is a subsequential limit, i.e. there is a  $0 \neq V \in \mathrm{isom}(X,\omega_{\varphi_0})$ such that $V_{k_j}/k_j \rightarrow V$ as $k_j \rightarrow +\infty$. 
We finally argue that in fact $\varphi_t = \mathrm{exp}(tJV)\cdot \varphi_0$, following \cite{BDL}: For each $k$, consider the smooth $d_1$-geodesic segments
$$
[0,k] \ni t \mapsto \varphi_t^k := \mathrm{exp}\left(t\frac{V_k}{k}\right) \cdot \varphi_0 \in \mathcal{H}_0
$$
Note that the function $t \mapsto h(t) := d_1(\varphi_t^k, \varphi_t)$ is convex (since $(\varphi_t)$ and $(\varphi_t^k)$ are both $d_1$-geodesic rays), with $h(0) = 0$ and $h(k) \leq o(k)$. 
The above inequality \eqref{equation 4144} then implies that, for any fixed $t \in [0,k]$ we have   
$
d_1(\varphi_t^k, \varphi_t) \leq t\frac{h(k)}{k} \rightarrow 0,
$
as $k \rightarrow +\infty$. Exactly as in \cite[Lemma 2.7]{BDL} we then get that $\mathrm{exp}(t{V_{k_j}}/{k_j}) \cdot \varphi_0 \longrightarrow \mathrm{exp}(tJV)\cdot \varphi_0$ smoothly, concluding the proof. 
\end{proof}

\noindent Putting these results together we see that if $\mathrm{DF}(\mathcal{X},\mathcal{A}) = 0$, then $\mathcal{X}_0$ is reduced and so $((\mathcal{X}_{0,red} - \mathcal{X}_0) \cdot \mathcal{A}^n) = 0$. In particular $\mathrm{M}^{\mathrm{NA}}(\mathcal{X},\mathcal{A}) = \mathrm{DF}(\mathcal{X},\mathcal{A}) = 0$ as well, so that $(1) \Rightarrow (2)$. Furthermore, the above arguments show that $(2)$ implies $d_{1,G}(\varphi_0, G.\varphi_t) \leq o(t)$ and so in turn by Proposition \ref{Prop existence of V} the statement $(5)$ follows.

\subsubsection{Proof of $(5) \Leftrightarrow (4)$}

The implication $(5) \Rightarrow (4)$ is immediate by definition, since then $$0 \leq \mathrm{J}_G(G \varphi_t) \leq \mathrm{J}(G\varphi_0) = 0$$ for each $t$. 
The converse implication $(4) \Rightarrow (5)$ follows precisely as in Proposition \ref{Prop existence of V} above.

\subsubsection{Proof of $(5) \Rightarrow (2) \Rightarrow (1)$}

\begin{lem} \label{Lemma analytic product condition}
Suppose that $h_{\omega}^V$ is a Hamiltonian potential for the real holomorphic vector field $V \in \mathfrak{isom}(X,\omega)$ with respect to $\omega$, i.e. $i_V(\omega) = i\bar{\partial}h_{\omega}^V$, where $h_{\omega}^V \in C^{\infty}(X,\mathbb{R})$. Then for all $t \geq 0$ we have $\mathrm{exp}(tV)^*\omega = \omega$ and $\mathrm{exp}(tJV)^*\omega = \omega_{\psi_t }$, 
where $(\psi_t)_{t \geq 0}$ is a smooth  ray that can be chosen so that
$
\dot{\psi}_t = \mathrm{exp}(tJV)^*h_{\omega}^V  
$
and $\mathrm{E}(\psi_t) = 0$ for each $t$.
\end{lem}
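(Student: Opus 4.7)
The plan is to verify the three assertions in sequence: the Killing property of the flow of $V$, the identification of $\exp(tJV)^*\omega$ as $\omega_{\psi_t}$ for an explicit $\psi_t$, and the normalization $\mathrm{E}(\psi_t)=0$. For the first claim, $\exp(tV)^*\omega = \omega$ is an immediate consequence of $V$ being a Killing field: as recalled in Section \ref{Section holomorphic vector fields and projection}, every real holomorphic Hamiltonian vector field on a K\"ahler manifold satisfies $\mathcal{L}_V\omega = 0$, so its flow preserves $\omega$.

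For the second assertion, I would consider the smooth family $\omega_t := \exp(tJV)^*\omega$. The field $JV$ is again real holomorphic (though no longer Killing in general), so $\omega_t$ is a K\"ahler form, and since $\exp(tJV)$ is isotopic to the identity, $[\omega_t] = [\omega]$ in $H^{1,1}(X,\mathbb{R})$. Differentiating in $t$, using Cartan's magic formula and $d\omega = 0$, gives
$$\frac{d}{dt}\omega_t = \exp(tJV)^*\mathcal{L}_{JV}\omega = \exp(tJV)^*d(i_{JV}\omega).$$
A direct computation from the Hamiltonian relation $i_V\omega = i\bar\partial h_\omega^V$ yields the identity $\mathcal{L}_{JV}\omega = dd^c h_\omega^V$, so the above becomes $dd^c\bigl(\exp(tJV)^* h_\omega^V\bigr)$. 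Setting
$$\psi_t := \int_0^t \exp(sJV)^*h_\omega^V\, ds,$$
so that $\psi_0 = 0$, integration in $t$ gives $\omega_t - \omega = dd^c \psi_t$, i.e. $\omega_{\psi_t} = \exp(tJV)^*\omega$, and by construction $\dot\psi_t = \exp(tJV)^*h_\omega^V$ on the nose.

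Finally, I would verify the normalization $\mathrm{E}(\psi_t) = 0$. Since $\psi_0 = 0$ gives $\mathrm{E}(\psi_0) = 0$, it suffices to show $\frac{d}{dt}\mathrm{E}(\psi_t) = 0$. Using $\mathrm{E}'(\varphi) = V^{-1}\omega_\varphi^n$ together with $\omega_{\psi_t} = \exp(tJV)^*\omega$ and the change-of-variables formula for the diffeomorphism $\exp(tJV)$,
$$\frac{d}{dt}\mathrm{E}(\psi_t) = V^{-1}\int_X \dot\psi_t\, \omega_{\psi_t}^n = V^{-1}\int_X \exp(tJV)^*\bigl(h_\omega^V\, \omega^n\bigr) = V^{-1}\int_X h_\omega^V\, \omega^n = 0,$$
where the last equality is the normalization $\int_X h_\omega^V\,\omega^n = 0$ imposed in Section \ref{Section holomorphic vector fields and projection}. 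The only mildly subtle step in the plan is the identity $\mathcal{L}_{JV}\omega = dd^c h_\omega^V$, which is a convention-sensitive but elementary calculation from the Hamiltonian relation; everything else is then a short integration and change-of-variables argument.
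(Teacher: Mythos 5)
Your proof is correct and takes essentially the same route as the paper's: the key identity $\tfrac{d}{dt}\exp(tJV)^*\omega = dd^c\left(\exp(tJV)^*h_{\omega}^V\right)$ (coming from Cartan's formula, the Hamiltonian relation and holomorphy of the flow), the relation $\mathrm{E}'=\mathrm{MA}$, and the normalization $\int_X h_{\omega}^V\,\omega^n=0$ are exactly the ingredients used there. The only difference is organizational and harmless: the paper first takes the $\mathrm{E}$-normalized potentials $\psi_t$ with $\omega_{\psi_t}=\exp(tJV)^*\omega$ and then shows the constant in $\dot{\psi}_t=\exp(tJV)^*h_{\omega}^V+C(t)$ vanishes, whereas you define $\psi_t$ by integrating $\exp(sJV)^*h_{\omega}^V$ in $s$ and then verify $\mathrm{E}(\psi_t)=0$ by the change-of-variables argument.
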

\begin{proof}
Write $f_t$ for the flow $\mathrm{exp}(tJV)$ of $JV$. Then $i_{V}(\omega) = \sqrt{-1}\bar{\partial}h_{\omega}^V$ implies that 
$$
i_{V}(f_t^*h_{\omega}^V) = f_t^*i_{V}(\omega) = f_t^*(\sqrt{-1}\bar{\partial}h_{\omega}^V) = \sqrt{-1}\bar{\partial}(f_t^*h_{\omega}^V). 
$$
Hence
$$
d \circ i_{V}(f_t^*\omega) = \sqrt{-1}\partial\bar{\partial}f_t^*h_{\omega}^V = dd^cf_t^*h_{\omega}^V.
$$
On the other hand there is a unique smooth ray 
$(\psi_t)_{t \geq 0}$ in $\mathcal{H}_0$ such that for all $t$ we have $\mathrm{E}(\psi_t) = 0$ and $f_t^*\omega = \omega_{\psi_t}$. As a consequence
$$
d \circ i_{V}(f_t^*\omega) = d \circ i_{V}(\omega_{\psi_t})  = L_{V}(\omega_{\psi_t}) = dd^c\dot{\psi}_t.  
$$
Since $X$ is compact we then have $\dot{\psi}_t = f_t^*h_{\omega}^V + C$, for some constant $C = C(t)$. We then conclude by showing that with the normalization
$$
\int_X h_{\omega}^V \omega^n = 0, 
$$
since we assume that $\mathrm{E}(\psi_t) = 0$ for each $t$, we have
$$
0 = \frac{d}{dt}\mathrm{E}(\psi_t) = \int_X \dot{\psi}_t \omega_{\psi_t}^n = \int_X h_{\omega}^V \omega^n + \int_X C(t) \omega_{\psi_t}^n = \int_X C(t) \omega^n,
$$
and hence $C(t) = 0$ for each $t$. 
\end{proof}

\begin{prop} \label{Prop Mabuchi constant}
Let $(\varphi_t)_{t \geq 0}$ be the unique geodesic ray associated to $(\mathcal{X},\mathcal{A})$. Then $t \mapsto \mathrm{M}(\varphi_t)$ is linear with slope given by the Futaki invariant $\mathrm{Fut}_{\alpha}(V)$. In particular, if $\varphi_0 \in \mathcal{H}_0$ is a cscK potential, then
$
\mathrm{M}(\varphi_t) = \mathrm{M}(\psi_0)
$
for all $t \geq 0$. If moreover the central fiber $ \mathcal{X}_0 $ is reduced, then $\mathrm{DF}(\mathcal{X},\mathcal{A}) = 0$.
\end{prop}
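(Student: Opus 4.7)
The first step is to identify the geodesic ray $(\varphi_t)$ with the smooth flow ray furnished by Lemma \ref{Lemma analytic product condition}. Since $(\varphi_t)$ satisfies $\exp(tJV)^*\omega = \omega_{\varphi_t}$ and is normalised so that $\mathrm{E}(\varphi_t) = 0$, the uniqueness part of Lemma \ref{Lemma analytic product condition} forces the identification $\dot{\varphi}_t = f_t^* h_\omega^V$, where $f_t := \exp(tJV)$ and $h_\omega^V$ is the normalised Hamiltonian. In particular $(\varphi_t)$ is a smooth ray on $X$, so the Euler--Lagrange equation defining the K-energy functional applies and gives
\begin{equation*}
\frac{d}{dt}\mathrm{M}(\varphi_t) = - V^{-1} \int_X \dot{\varphi}_t \bigl( \mathcal{S}(\omega_{\varphi_t}) - \bar{\mathcal{S}} \bigr)\, \omega_{\varphi_t}^n = - V^{-1} \int_X f_t^* h_\omega^V \bigl( \mathcal{S}(f_t^*\omega) - \bar{\mathcal{S}} \bigr)\, (f_t^*\omega)^n .
\end{equation*}

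Next I would use that $f_t$ is a biholomorphism, hence commutes with taking scalar curvature: $\mathcal{S}(f_t^*\omega) = f_t^*\mathcal{S}(\omega)$. Pulling the integrand back under the diffeomorphism $f_t$ yields
\begin{equation*}
\frac{d}{dt}\mathrm{M}(\varphi_t) = - V^{-1} \int_X h_\omega^V \bigl( \mathcal{S}(\omega) - \bar{\mathcal{S}} \bigr)\, \omega^n = - V^{-1}\mathrm{Fut}_\alpha(V),
\end{equation*}
which is independent of $t$. Hence $t \mapsto \mathrm{M}(\varphi_t)$ is affine, with slope $-V^{-1}\mathrm{Fut}_\alpha(V)$. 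Since $(X,\alpha)$ admits a cscK representative, the Futaki invariant vanishes by the theorem of Futaki recalled in Section \ref{Section holomorphic vector fields and projection}, so $\mathrm{M}(\varphi_t) \equiv \mathrm{M}(\varphi_0)$ for all $t \geq 0$. (Equivalently, choosing the cscK reference $\omega$ in the formula above makes the integrand vanish directly, without invoking cohomological invariance.)

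It remains to deduce $\mathrm{DF}(\mathcal{X},\mathcal{A}) = 0$ under the assumption that $\mathcal{X}_0$ is reduced. By Theorem \ref{Theorem generalized thm C} there exists a smooth, $\mathcal{C}^\infty$-compatible ray $(\psi_t)$ emanating from $\varphi_0$ with $\mathrm{E}(\psi_t)=0$ and $t^{-1}\mathrm{M}(\psi_t) \to \mathrm{M}^{\mathrm{NA}}(\mathcal{X},\mathcal{A})$. As a $\mathcal{C}^\infty$-compatible ray, $(\psi_t)$ is in particular $L^\infty$-compatible with $(\mathcal{X},\mathcal{A})$, hence of the same singularity type as $(\varphi_t)$, so $|\psi_t - \varphi_t|$ is uniformly bounded in $t$ (cf.\ the argument of Lemma \ref{Lemma change of rays}). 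Combined with the entropy/energy decomposition \eqref{Chens formula} and the fact that the energy part $\bar{\mathcal{S}}\mathrm{E} - \mathrm{E}^{\mathrm{Ric}(\omega)}$ differs by $O(1)$ under a bounded change of potential, this forces $\mathrm{M}(\psi_t) = \mathrm{M}(\varphi_t) + o(t)$. Since $\mathrm{M}(\varphi_t)$ is constant in $t$, dividing by $t$ and passing to the limit yields $\mathrm{M}^{\mathrm{NA}}(\mathcal{X},\mathcal{A}) = 0$. The assumption that $\mathcal{X}_0$ is reduced gives $((\mathcal{X}_{0,\mathrm{red}} - \mathcal{X}_0)\cdot \mathcal{A}^n) = 0$, so finally $\mathrm{DF}(\mathcal{X},\mathcal{A}) = \mathrm{M}^{\mathrm{NA}}(\mathcal{X},\mathcal{A}) = 0$, as required.

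The main obstacle is the last step: transferring constancy of the K-energy along the smooth flow geodesic $(\varphi_t)$ to vanishing of $\mathrm{M}^{\mathrm{NA}}(\mathcal{X},\mathcal{A})$, since a priori Theorem \ref{Theorem generalized thm C} only controls the slope along a chosen $\mathcal{C}^\infty$-compatible ray. The key observation is that both rays are $L^\infty$-compatible with $(\mathcal{X},\mathcal{A})$, hence share a singularity type, and the K-energy is sufficiently well behaved under such bounded perturbations to guarantee that the two asymptotic slopes agree.
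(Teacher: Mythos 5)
Your computation of the slope --- identifying $\dot{\varphi}_t = f_t^*h_{\omega}^V$ via Lemma \ref{Lemma analytic product condition}, differentiating $\mathrm{M}$ along the smooth ray, using $\mathcal{S}(f_t^*\omega) = f_t^*\mathcal{S}(\omega)$ and changing variables to land on the Futaki invariant, then invoking its vanishing in the cscK case --- is exactly the paper's argument (the factor $-V^{-1}$ discrepancy with the statement is a harmless normalisation issue, present in the paper's own computation as well).

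The last step, however, contains a genuine gap. To pass from constancy of $\mathrm{M}(\varphi_t)$ to $\mathrm{M}^{\mathrm{NA}}(\mathcal{X},\mathcal{A})=0$ you compare the geodesic with the $C^{\infty}$-compatible ray $(\psi_t)$ of Theorem \ref{Theorem generalized thm C} and claim that uniform boundedness of $|\psi_t-\varphi_t|$, together with \eqref{Chens formula} and the $O(1)$-stability of the energy part, forces $\mathrm{M}(\psi_t)=\mathrm{M}(\varphi_t)+o(t)$. This does not follow: the entropy term $V^{-1}\int_X \log\left(\omega_{\psi_t}^n/\omega^n\right)\omega_{\psi_t}^n$ is not controlled by an $L^{\infty}$ bound on $\psi_t-\varphi_t$, since potentials at bounded uniform distance can have completely different Monge--Amp\`ere measures, and the difference of entropies can be of order $t$. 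This is precisely the delicate point the paper's formalism is built around: the transfer lemma for rays of the same singularity type (Lemma \ref{Lemma change of rays}) is deliberately stated only for the $\mathrm{J}$-functional, and the Mabuchi asymptotics are proved only along the specially constructed $C^{\infty}$-compatible ray, the corresponding statement for the geodesic ray being explicitly flagged as unclear when the total space is singular. The paper's proof does not attempt your bridging argument; it identifies the slope of $\mathrm{M}$ along the geodesic with $\mathrm{M}^{\mathrm{NA}}(\mathcal{X},\mathcal{A})$ directly, the identification supplied by \cite{SD1} (cf.\ Theorem \ref{Theorem generalized thm C}), which is available here because the ray is the flow ray of a Hamiltonian vector field, so that by the injectivity Lemma \ref{Lemma unique model} the test configuration is identified with a product configuration (cf.\ Remark \ref{Cor product geodesic} and the remark after Proposition \ref{Prop comparison polarized}). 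To close your argument you would need either that route or a genuine comparison of the entropies of the two rays, not merely of their potentials.
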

\begin{proof}
Since the geodesic ray $(\varphi_t)_{t \geq 0}$ is smooth we may compute
$$
V  \cdot \frac{d}{dt}\mathrm{M}(\varphi_t) = \int_X \dot{\varphi}_t (\bar{S} - S(\omega_{\varphi_t})) \omega_{\varphi_t}^n
$$ 
$$
= \int_X f_t^*h_{\omega}^V (\bar{S} - f_t^*S(\omega)) f_t^*\omega^n = \int_X h_{\omega}^V (\bar{S} - S(\omega)) \omega^n = V \cdot \mathrm{Fut}_{\alpha}(V). 
$$
We have here used that $f_t^*S(\omega) = S(f_t^*\omega)$. If we assume that $S(\omega) = \bar{S}$, i.e. that $0 \in \mathcal{H}$ is a cscK potential, then $\mathrm{Fut}_{\alpha}(V)$ vanishes. 
By linearity $t \mapsto \mathrm{M}(\varphi_t)$ is constant and $\mathrm{Fut}_{\alpha}(V) = \mathrm{M}^{\mathrm{NA}}(\mathcal{X},\mathcal{A}) = 0$. If the central fiber $\mathcal{X}_0$ is reduced we moreover have $\mathrm{DF}(\mathcal{X},\mathcal{A}) = \mathrm{M}^{\mathrm{NA}}(\mathcal{X},\mathcal{A}) = 0$, which is what we wanted to prove.
\end{proof}

\noindent Putting this together shows that $(5) \Rightarrow (2) \Rightarrow (1)$. 

\subsubsection{Proof of $(2) \Leftrightarrow (3)$}

We here wish to show that if $\mathrm{M}^{\mathrm{NA}}(\mathcal{X},\mathcal{A}) = 0$ then $\mathrm{M}$ is constant along the associated geodesic ray emanating from the given cscK potential $\varphi_0 \in \mathcal{H}_0$. The proof is given precisely as in Proposition \ref{Prop Mabuchi constant}. Indeed, if $\mathrm{M}^{\mathrm{NA}}(\mathcal{X},\mathcal{A}) = 0$ and the central fiber $\mathcal{X}_0$ is reduced, then $(5)$ holds, so in particular we are in the situation of Proposition \ref{Prop Mabuchi constant}. As before
$$
0 = \frac{d}{dt}\mathrm{M}(\varphi_t) = \mathrm{Fut}_{\alpha}(V). 
$$
and so $\mathrm{M}(\varphi_t)$ is constant. The converse is immediate, since by Theorem \cite[Theorem C]{SD1} we have 
$$
\mathrm{M}^{\mathrm{NA}}(\mathcal{X},\mathcal{A}) = \lim_{t \rightarrow +\infty} \frac{\mathrm{M}(\varphi_t)}{t}.
$$ 

\subsubsection{Proof of $(3) \Leftrightarrow (6)$}

The equivalence $(3) \Leftrightarrow (6)$ is an immediate consequence of the characterization of cscK metrics as the minima of the Mabuchi functional: Indeed, one the one hand, if $\mathrm{M}(\varphi_t) = \mathrm{M}(\varphi_0)$ then since $\varphi_0$ is a minimum, so is $\varphi_t$. Hence $\varphi_t$ is a cscK metric. 
Conversely, the Mabuchi functional is (tautologically) constant on the minimum set.

\noindent We now discuss some immediate applications in the special case when the automorphism group is discrete.

\medskip

\subsection{The case of $\mathrm{Aut}(X)$ finite}

In case the automorphism group is discrete the geodesic K-polystability notion is in fact equivalent to K-polystability. The argument also simplifies, leading to a direct proof of K-stability that does not rely on the machinery of lifting of cscK classes to blowups (due to Arezzo-Pacard \cite{ArezzoPacard,ArezzoPacardII} and others, see \cite{DervanRoss} building on arguments of Stoppa \cite{StoppacscKmetricsimpliesstability}). 


\begin{thm} \label{Theorem equivalent characterizations discrete}
Suppose that $(X,\omega)$ is a cscK manifold with discrete automorphism group. Let $\alpha := [\omega] \in H^{1,1}(X,\mathbb{R})$ be the corresponding K\"ahler class. For any cohomological test configuration $(\mathcal{X},\mathcal{A})$ for $(X,\alpha)$ the following are equivalent: 
\begin{enumerate}
\item $\mathrm{DF}(\mathcal{X},\mathcal{A}) = 0$
\item $\mathrm{J}^{\mathrm{NA}}(\mathcal{X},\mathcal{A}) = 0$
\item $(\mathcal{X},\mathcal{A})$ is the trivial test configuration. 
\end{enumerate}
\end{thm}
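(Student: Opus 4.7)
The plan is to establish the cycle $(3) \Rightarrow (1) \Rightarrow (2) \Rightarrow (3)$, with the last implication being the substantive content. The implication $(3) \Rightarrow (1), (2)$ is immediate: for the trivial test configuration $(X \times \mathbb{P}^1, p_1^*\alpha)$ the intersection numbers defining $\mathrm{DF}$ and $\mathrm{J}^{\mathrm{NA}}$ vanish, since there is no vertical contribution from the central fiber. For $(1) \Rightarrow (2)$, I would invoke uniform K-stability in the discrete-automorphism case: since $(X,\alpha)$ is cscK and $\mathrm{Aut}(X)$ is discrete, $G$-coercivity of the Mabuchi functional reduces to ordinary coercivity of $\mathrm{M}$ in terms of $\mathrm{J}$; combining this with the limit formulas $\mathrm{J}^{\mathrm{NA}}(\mathcal{X},\mathcal{A}) = \lim_t \mathrm{J}(\varphi_t)/t$ and $\mathrm{M}^{\mathrm{NA}}(\mathcal{X},\mathcal{A}) = \lim_t \mathrm{M}(\varphi_t)/t$ from Section \ref{Section energy functional asymptotics} yields $\mathrm{DF}(\mathcal{X},\mathcal{A}) \geq \delta \mathrm{J}^{\mathrm{NA}}(\mathcal{X},\mathcal{A})$ for some $\delta > 0$, and hence $\mathrm{DF} = 0$ forces $\mathrm{J}^{\mathrm{NA}} = 0$.

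The heart of the argument is $(1) \Rightarrow (3)$, which I would prove as follows. Given $(\mathcal{X},\mathcal{A})$ with $\mathrm{DF}(\mathcal{X},\mathcal{A}) = 0$, first replace it by its projection $\mathcal{P}(\mathcal{X},\mathcal{A}) = (\mathcal{X}, \mathcal{A} - c[\mathcal{X}_0])$ from Proposition \ref{Prop basic properties of projection}, whose associated geodesic ray satisfies $\mathrm{E}(\varphi_t) = 0$. A short intersection-theoretic check (using $[\mathcal{X}_0]^2 = 0$, the restriction identity $(K_{\mathcal{X}/\mathbb{P}^1} \cdot \mathcal{A}^{n-1} \cdot [\mathcal{X}_0]) = (K_X \cdot \alpha^{n-1})$, and the cohomological definition of $\bar{\mathcal{S}}$) shows that the two linear-in-$c$ correction terms exactly cancel, so $\mathrm{DF}$ is invariant under $\mathcal{P}$, and in particular $\mathrm{DF}(\mathcal{P}(\mathcal{X},\mathcal{A})) = 0$. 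One can then apply Theorem \ref{Theorem analytic product conditions} to $\mathcal{P}(\mathcal{X},\mathcal{A})$: its condition $(5)$ produces a real holomorphic Hamiltonian vector field $V$ with $\exp(tJV)^*\omega = \omega_{\varphi_t}$. But $\mathrm{Aut}(X)$ discrete means $\mathfrak{aut}(X) = 0$, forcing $V = 0$ and hence $\omega_{\varphi_t} = \omega$ for all $t \geq 0$; since $\mathrm{E}(\varphi_t) = 0$, the geodesic ray associated to $\mathcal{P}(\mathcal{X},\mathcal{A})$ is the constant ray $\varphi_t \equiv \varphi_0$.

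The last step is to invoke the injectivity result of Lemma \ref{Lemma unique model}. The constant ray is evidently $L^{\infty}$-compatible with the trivial test configuration $(X \times \mathbb{P}^1, p_1^*\alpha)$, which is itself normal and relatively Kähler; by uniqueness of the normal relatively Kähler model it follows that $\mathcal{P}(\mathcal{X},\mathcal{A}) \cong (X \times \mathbb{P}^1, p_1^*\alpha)$ as $\mathbb{C}^*$-equivariant test configurations. By Proposition \ref{Prop basic properties of projection}(1), this means $(\mathcal{X},\mathcal{A})$ itself is a product test configuration; since $\mathrm{Aut}(X)$ is discrete the only such product is the trivial one, yielding $(3)$. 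This closes the cycle, and combined with $(1) \Rightarrow (2)$ established above, completes the equivalence.

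The main obstacle I expect is verifying the invariance of $\mathrm{DF}$ under the projection $\mathcal{P}$ cleanly in the possibly singular normal setting; here one must pass to a smooth dominating model, apply the projection formula to the correction terms involving $[\mathcal{X}_0]$, and ensure that the hypothesis of relative Kählerness is preserved when adding the pull-back class $c[\mathcal{X}_0]$ (which it is, since $[\mathcal{X}_0] = \pi^*[0]$ affects only the absolute, not the relative, positivity of $\mathcal{A}$). A secondary point is that Theorem \ref{Theorem analytic product conditions} requires $\mathcal{A}$ to be relatively Kähler and its geodesic to satisfy $\mathrm{E} = 0$; the projection step is precisely what arranges the second condition, so both hypotheses are in place when Theorem \ref{Theorem analytic product conditions} is applied.
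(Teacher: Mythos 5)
Your implications $(3)\Rightarrow(1),(2)$, $(1)\Rightarrow(2)$ and $(1)\Rightarrow(3)$ follow essentially the route the paper intends for this theorem: the estimate $\mathrm{DF}(\mathcal{X},\mathcal{A})\geq \mathrm{M}^{\mathrm{NA}}(\mathcal{X},\mathcal{A})\geq\delta\,\mathrm{J}^{\mathrm{NA}}(\mathcal{X},\mathcal{A})$ via coercivity and the slope formulas is exactly the "uniform K-stability" remark made after the theorem, and the chain "project so that $\mathrm{E}(\varphi_t)=0$ (Proposition \ref{Prop basic properties of projection}), apply Theorem \ref{Theorem analytic product conditions}(5), use discreteness of $\mathrm{Aut}(X)$ to force $V=0$, conclude the geodesic is constant, finish with Lemma \ref{Lemma unique model}" is the paper's strategy; your verification that $\mathrm{DF}$ and relative K\"ahlerness survive the shift $\mathcal{A}\mapsto\mathcal{A}-c[\mathcal{X}_0]$ is correct and in fact more careful than the paper's sketch (the only caveat, shared with the paper, is that one recovers the trivial total space with $\mathcal{A}=p_1^*\alpha+c[\mathcal{X}_0]$, i.e.\ triviality of the class holds up to a pullback from $\mathbb{P}^1$).

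There is, however, a genuine gap in the logical structure. You announce the cycle $(3)\Rightarrow(1)\Rightarrow(2)\Rightarrow(3)$, but what you then prove is $(1)\Rightarrow(3)$, not $(2)\Rightarrow(3)$. The implications actually established are $(3)\Rightarrow(1)$, $(3)\Rightarrow(2)$, $(1)\Rightarrow(2)$ and $(1)\Rightarrow(3)$; none of these gives any implication \emph{out of} $(2)$, so the claim that $\mathrm{J}^{\mathrm{NA}}(\mathcal{X},\mathcal{A})=0$ forces triviality (or $\mathrm{DF}=0$) is never proved, and the three-way equivalence is incomplete. Note that this does not follow from what you have: coercivity bounds $\mathrm{M}^{\mathrm{NA}}$ from \emph{below} by $\mathrm{J}^{\mathrm{NA}}$, which is the wrong direction. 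The missing implication can be supplied with the same endgame you already use for $(1)\Rightarrow(3)$: if $\mathrm{J}^{\mathrm{NA}}(\mathcal{X},\mathcal{A})=0$, pass to $\mathcal{P}(\mathcal{X},\mathcal{A})$ (which leaves $\mathrm{J}^{\mathrm{NA}}$ unchanged by Proposition \ref{Prop basic properties of projection}(2)), so that along the associated geodesic normalized by $\mathrm{E}(\varphi_t)=0$ one has $\mathrm{J}(\varphi_t)=o(t)$; since $\mathrm{J}$ is comparable to $d_1(0,\cdot)$ on $\mathcal{H}_0$ (as used in Lemma \ref{Lemma JNA bounded}, via \cite[Proposition 5.5]{DR}), the constant-speed geodesic satisfies $d_1(\varphi_0,\varphi_t)=o(t)$ and is therefore constant, whence Lemma \ref{Lemma unique model} gives triviality, i.e.\ $(2)\Rightarrow(3)$. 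Adding this step closes the equivalence.
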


\noindent It is worth pointing out that the above results entail that uniform K-stability implies K-stability, as the terminology suggests: 

\begin{cor}
If $\alpha \in \mathcal{C}_X$ is uniformly K-stable then it is K-stable. 
\end{cor}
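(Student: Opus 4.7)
The plan is to deduce K-stability by showing that any normal, relatively K\"ahler test configuration $(\mathcal{X},\mathcal{A})$ for $(X,\alpha)$ with $\mathrm{DF}(\mathcal{X},\mathcal{A})=0$ must be trivial. First I would invoke the definition of uniform K-stability, which supplies a constant $\delta > 0$ with $\mathrm{DF}(\mathcal{X},\mathcal{A}) \geq \delta \cdot \mathrm{J}^{\mathrm{NA}}(\mathcal{X},\mathcal{A})$ for every such test configuration. Combined with the non-negativity of $\mathrm{J}^{\mathrm{NA}}$ (visible from its asymptotic interpretation below and the pointwise positivity of the $\mathrm{J}$-functional), the assumption $\mathrm{DF}(\mathcal{X},\mathcal{A})=0$ forces $\mathrm{J}^{\mathrm{NA}}(\mathcal{X},\mathcal{A})=0$.

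Next I would translate this non-Archimedean vanishing into a statement about the unique weak geodesic ray $(\varphi_t)_{t \geq 0}$ associated to $(\mathcal{X},\mathcal{A})$, normalised so that $\mathrm{E}(\varphi_t)=0$ for all $t$ by applying the projection $\mathcal{P}$ of Proposition \ref{Prop basic properties of projection}, which preserves both the geodesic property and the value of $\mathrm{J}$. The asymptotic slope formula in the Corollary following Theorem \ref{Theorem generalized theorem B} then gives $\mathrm{J}(\varphi_t)/t \to 0$. Invoking the comparison $\mathrm{J} \sim d_1$ on $\mathcal{H}_0$ recalled in Section \ref{section energy functionals}, this upgrades to $d_1(\varphi_0,\varphi_t) = o(t)$. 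Since $(\varphi_t)_{t \geq 0}$ is a weak $d_1$-geodesic and therefore has constant $d_1$-speed, i.e. $d_1(\varphi_0,\varphi_t) = t \cdot d_1(\varphi_0,\varphi_1)$, the sublinear growth forces this speed to vanish, so that the ray is constant: $\varphi_t \equiv \varphi_0$.

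Finally, the constant ray is tautologically $L^{\infty}$-compatible with the trivial test configuration $(X \times \mathbb{P}^1, p_1^*\alpha)$, so by the injectivity Lemma \ref{Lemma unique model} the test configuration $(\mathcal{X},\mathcal{A})$ is $\mathbb{C}^*$-equivariantly isomorphic to the trivial one. Together with $\mathrm{DF} \geq 0$ provided by uniform K-stability, this is precisely K-stability in the sense of Definition \ref{Def Kps}. The main obstacle I anticipate lies in the second step: one must check that the comparison inequality $\mathrm{J} \leq C d_1 + C'$ can be applied at every point along the (weak, $C^{1,1}$) geodesic ray and not merely on the smooth locus $\mathcal{H}_0$, and confirm that the projection $\mathcal{P}$ genuinely commutes with the asymptotic slope construction. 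Modulo this technical bookkeeping, the proof is a clean synthesis of the asymptotic slope formula with the unique-model injectivity established in Section \ref{Section 3}.
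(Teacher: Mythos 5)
Your proposal is correct and follows essentially the same route the paper intends: uniform K-stability together with $\mathrm{DF}=0$ forces $\mathrm{J}^{\mathrm{NA}}=0$, the slope formula and the comparability of $\mathrm{J}$ with $d_1$ (after the projection $\mathcal{P}$, which fixes the total space) show the associated geodesic ray is constant, and the injectivity Lemma \ref{Lemma unique model} then identifies the test configuration with the trivial one, which is exactly the argument underlying Theorem \ref{Theorem equivalent characterizations discrete} and the sketch given in the introduction. The only point to phrase carefully is that the injectivity lemma is applied to $\mathcal{P}(\mathcal{X},\mathcal{A})=(\mathcal{X},\mathcal{A}-c[\mathcal{X}_0])$ rather than to $(\mathcal{X},\mathcal{A})$ itself, but since the projection does not change the total space or the $\mathbb{C}^*$-action this yields exactly the triviality required by the definition of K-stability.
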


\noindent This gives one way of showing that cscK manifolds are K-stable, since in fact proving uniform K-stability using the energy functional asymptotics developed in \cite{SD1} is not much more difficult than proving K-semistability. 
In this setting we can also prove that geodesic K-stability is equivalent to K-stability in the usual sense, thus also yielding a new proof of K-stability of cscK manifolds (cf. \cite{DervanRoss}: 

\begin{cor}
Suppose that $(X,\omega)$ is a cscK manifold admitting no non-trivial holomorphic vector fields. Then $(X,[\omega])$ is K-stable. 
\end{cor}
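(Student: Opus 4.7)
The strategy is to combine the K-semistability of cscK manifolds with the equivalent characterizations of $\mathrm{DF}=0$ established in Theorem~\ref{Theorem equivalent characterizations discrete}. First I would invoke \cite[Theorem A]{SD1}: since $(X,\omega)$ is cscK, $(X,\alpha)$ is K-semistable, so for every normal and relatively K\"ahler test configuration $(\mathcal{X},\mathcal{A})$ for $(X,\alpha)$ one has $\mathrm{DF}(\mathcal{X},\mathcal{A})\geq 0$. To promote K-semistability to K-stability, it therefore suffices to show that whenever $\mathrm{DF}(\mathcal{X},\mathcal{A}) = 0$, the test configuration $(\mathcal{X},\mathcal{A})$ must be trivial.

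Given such a test configuration with vanishing Donaldson--Futaki invariant, I would first replace it by its projection $\mathcal{P}(\mathcal{X},\mathcal{A})$ so that the associated geodesic ray $(\varphi_t)_{t\geq 0}$ satisfies $\mathrm{E}(\varphi_t)=0$ for each $t$; by Proposition~\ref{Prop basic properties of projection} this leaves the underlying total space unchanged and preserves the property of being a product. Now apply Theorem~\ref{Theorem analytic product conditions}, implication $(1)\Rightarrow(5)$: there exists a real holomorphic Hamiltonian vector field $V$ on $X$ such that $\exp(tJV)^*\omega=\omega_{\varphi_t}$ for every $t\geq 0$. Under the hypothesis that $X$ admits no non-trivial holomorphic vector fields, one has $V = 0$, so $\omega_{\varphi_t} = \omega$ identically; that is, $(\varphi_t)$ is the constant ray.

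The constant ray is tautologically $L^{\infty}$-compatible with the trivial test configuration $(X\times\mathbb{P}^1, p_1^*\alpha)$, and it is by construction also $L^{\infty}$-compatible with $(\mathcal{X},\mathcal{A})$. The injectivity Lemma~\ref{Lemma unique model} then provides a $\mathbb{C}^*$-equivariant isomorphism between $\mathcal{X}$ and $X\times\mathbb{P}^1$ matching the cohomology classes, so $(\mathcal{X},\mathcal{A})$ is trivial. This completes the proof.

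The main obstacle is not the corollary itself --- which is essentially a one-line deduction from Theorem~\ref{Theorem equivalent characterizations discrete} --- but rather the two inputs underpinning it: the analytic characterization of $\mathrm{DF}=0$ via a Hamiltonian flow, which relies on the $G$-coercivity theorem of \cite{BDL} and on the asymptotic slope formula for the K-energy along $C^\infty$-compatible rays through singular test configurations (Theorem~\ref{Theorem generalized thm C}); and the injectivity result Lemma~\ref{Lemma unique model}, whose proof exploits the relatively K\"ahler hypothesis on $\mathcal{A}$ in an essential way. Once those ingredients are in place, the discrete case reduces to the clean three-step argument above and sidesteps the blowup-based approach to K-stability of \cite{StoppacscKmetricsimpliesstability, DervanRoss}.
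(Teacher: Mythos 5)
Your proposal is correct and follows essentially the same route as the paper: K-semistability from \cite[Theorem A]{SD1}, then for $\mathrm{DF}(\mathcal{X},\mathcal{A})=0$ the analytic characterization (Theorem \ref{Theorem analytic product conditions}, after normalizing via the projection $\mathcal{P}$) forces the associated geodesic ray to come from a holomorphic Hamiltonian vector field, which vanishes under the discreteness hypothesis, so the ray is constant and the injectivity Lemma \ref{Lemma unique model} identifies $(\mathcal{X},\mathcal{A})$ with the trivial test configuration. Your remarks on the role of $G$-coercivity, the K-energy slope formula, and the relatively K\"ahler hypothesis in the injectivity lemma accurately reflect the inputs the paper itself relies on.
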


\noindent For other proofs in the K\"ahler setting, see \cite{DR, SD1}. We finally discuss product configurations and geodesic K-polystability. 

\medskip

\subsection{Product configurations and geodesic K-polystability} \label{Section geodesic kps}

In the case of polarized manifolds it is well-known that the geodesic ray associated to a product test configuration is given by pullback via the $\mathbb{C}^*$-action (see e.g. \cite{Berman}). Moreover, up to a multiplicative constant the Donaldson-Futaki invariant equals the Futaki invariant. In particular, the Donaldson-Futaki invariant of a product configuration for a cscK manifold (or merely K-semistable) $(X,\alpha)$ must vanish. We here check the analog of these results for cohomological product configurations:   

\begin{prop} \label{Prop product vanish}
Let $(\mathcal{X},\mathcal{A})$ be a product test configuration for any \emph{(not necessarily cscK)} pair $(X,\alpha)$, induced by a $\mathbb{C}^*$-action $\lambda$ on $X$. Let $V_{\lambda}$ be the infinitesimal generator of $\rho$. Then the geodesic ray associated to $(\mathcal{X},\mathcal{A})$ is of the form $\varphi_t = \exp(tJV_{\lambda}).\varphi_0$. 
\end{prop}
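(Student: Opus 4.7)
The plan is to identify the candidate ray $\psi_t := \exp(tJV_\lambda).\varphi_0$, with the action on potentials given by formula \eqref{equation action}, as the weak geodesic ray associated to $(\mathcal{X},\mathcal{A})$, and then conclude via uniqueness. To get started, note that $V_\lambda$ generates a non-trivial $\mathbb{C}^*$-action on the compact K\"ahler manifold $X$ and hence vanishes at some point; by \cite{LeBrunSimanca} it is therefore a real holomorphic Hamiltonian vector field on $(X,\omega_{\varphi_0})$. Applying Lemma \ref{Lemma analytic product condition} with $\omega$ replaced by $\omega_{\varphi_0}$ yields a smooth ray $(\psi_t)_{t \geq 0}$ of K\"ahler potentials satisfying $\exp(tJV_\lambda)^*\omega_{\varphi_0} = \omega_{\psi_t}$.

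That $(\psi_t)_{t\geq 0}$ is a weak geodesic ray in the sense of Definition \ref{subgeodesic definition} is essentially classical: rays obtained from the flow of $JV$ for $V$ a Hamiltonian Killing field are smooth geodesics in the Mabuchi metric (cf.\ \cite[Section 7.2]{DR}), and in the smooth regime $d_1$-geodesics coincide with Monge--Amp\`ere geodesics, so the associated $S^1$-invariant function $\Phi$ on $X \times \bar{\Delta}^*$ solves $(p_1^*\omega + dd^c_{(x,\tau)}\Phi)^{n+1} = 0$. Alternatively one verifies the HMA equation directly: $\omega_{\psi_t}$ is the pullback of the fixed K\"ahler form $\omega_{\varphi_0}$ by a family of holomorphic automorphisms depending holomorphically on $\log \tau^{-1}$, which forces a degenerate tangent direction in the $\tau$-variable.

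The heart of the argument is the verification of $L^{\infty}$-compatibility of $(\psi_t)$ with $(\mathcal{X},\mathcal{A})$ in the sense of Definition \ref{Definition compatibility}. Since $\mathcal{X} = X \times \mathbb{P}^1$ is already smooth, one takes as smooth model a minimal sequence of blow-ups $\rho: \hat{\mathcal{X}} \to \mathcal{X}$ resolving the indeterminacy of the bimeromorphic map $\mu: \mathcal{X} \dashrightarrow X \times \mathbb{P}^1$ given on $\mathcal{X} \setminus \mathcal{X}_0$ by $(y,z) \mapsto (\lambda^{-1}(z).y, z)$, cf.\ Example \ref{Example product}. The decomposition $\rho^*\mathcal{A} = \mu^*p_1^*\alpha + [D]$ of \cite[Proposition 3.10]{SD1} produces a divisor $D$ supported on $\hat{\mathcal{X}}_0$ with Green function $\psi_D$. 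Using the expansion $\psi_t = \exp(tJV_\lambda).0 + \varphi_0 \circ \exp(tJV_\lambda)$ from \eqref{equation action} together with the relation $\lambda(\tau) = \exp(tJV_\lambda)\circ\exp(sV_\lambda)$ for $\tau = e^{-t+is}$, which intertwines the $\mathbb{C}^*$-action on $\mathcal{X}$ with the real flow of $JV_\lambda$, one sees that the twist by $\lambda^{-1}(z)$ in $\mu$ cancels precisely the growth of $\psi_t$ along the flow, so that $\Phi \circ \mu + \psi_D$ extends to a locally bounded $\rho^*\Omega$-psh function on $\hat{\mathcal{X}}$.

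Finally, by uniqueness of the weak geodesic ray attached to a test configuration with prescribed initial datum \cite[Lemma 4.6]{SD1}, the smooth ray $(\psi_t)$ must coincide with the geodesic ray $(\varphi_t)$ associated to $(\mathcal{X},\mathcal{A})$. The main obstacle lies in the compatibility verification in the third paragraph: the explicit identification of the divisor $D$ on the resolved space $\hat{\mathcal{X}}$ and the careful matching of its Green function against the asymptotic behavior of $\Phi \circ \mu$ near the central fiber, all while respecting $\mathbb{C}^*$-equivariance, is where the bookkeeping is most delicate.
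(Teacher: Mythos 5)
Your strategy---produce the candidate ray $\psi_t=\exp(tJV_\lambda).\varphi_0$, check that it is a weak geodesic which is $L^{\infty}$-compatible with $(\mathcal{X},\mathcal{A})$, and then identify it with the associated geodesic---is genuinely different from the paper's proof, which instead solves the homogeneous Monge--Amp\`ere boundary value problem on $M=\pi^{-1}(\bar\Delta)$ explicitly (for a product the Perron envelope is just the boundary datum, since the twisted pullback of $p_1^*\omega_{\varphi_0}$ already has vanishing top power), and reads off $\lambda(\tau)^*\omega_{\varphi_0}=\omega_{\varphi_t}$ directly. Your route could be made to work, but as written it has genuine gaps. First, the appeal to Lemma \ref{Lemma analytic product condition} is not justified: that lemma assumes $V\in\mathfrak{isom}(X,\omega)$, i.e.\ that the flow of $V$ preserves the metric, and its conclusion includes $\exp(tV)^*\omega=\omega$. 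The fact that $V_\lambda$ has a zero only gives the existence of a holomorphy/Hamiltonian potential; it does not make $V_\lambda$ Killing for $\omega_{\varphi_0}$, and for an arbitrary potential $\varphi_0$ and an arbitrary $\mathbb{C}^*$-action this fails (one needs $\varphi_0$ invariant under the circle $S^1\subset\mathbb{C}^*$, which is also what makes $t\mapsto\exp(tJV_\lambda).\varphi_0$ a Mabuchi geodesic; the reference to \cite[Section 7.2]{DR} concerns Killing fields). Relatedly, the assertion that in the smooth regime $d_1$-geodesics coincide with Monge--Amp\`ere geodesics is not correct as stated, since $d_1$-geodesics are far from unique; only your direct verification sketch of the degenerate direction in the $\tau$-variable is usable.

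Second, and more seriously, the step you yourself identify as the heart of the argument---$L^{\infty}$-compatibility of $(\psi_t)$ with $(\mathcal{X},\mathcal{A})$, i.e.\ local boundedness of $\Phi\circ\mu+\psi_D$ on a resolution $\hat{\mathcal{X}}$ in the sense of Definition \ref{Definition compatibility}---is only asserted (``the twist by $\lambda^{-1}(z)$ cancels precisely the growth''), never carried out. For a nontrivial product configuration the map $(x,z)\mapsto(\lambda^{-1}(z)\cdot x,z)$ of Example \ref{Example product} has indeterminacy along $\mathcal{X}_0$, the divisor $D$ in $\rho^*\mathcal{A}=\mu^*p_1^*\alpha+[D]$ is nonzero, and matching the growth of $\Phi\circ\mu$ against the Green function $\psi_D$ is exactly the nontrivial content of your approach; without it the proof is incomplete, whereas the paper's explicit envelope computation handles this bookkeeping in one stroke. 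Finally, the closing uniqueness step needs a sharper statement than ``uniqueness of the geodesic ray with prescribed initial datum'': infinitely many distinct geodesic rays emanate from the same $\varphi_0$ (one for each test configuration), so what you actually need is that a locally bounded geodesic ray from $\varphi_0$ with the singularity type prescribed by $(\mathcal{X},\mathcal{A})$ must coincide with the associated geodesic ray; this is true (by a comparison/maximality argument, in line with the remark following Definition \ref{Definition compatibility smooth}), but it is an additional argument that should be supplied rather than cited as \cite[Lemma 4.6]{SD1}.
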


\begin{proof}
Any product test configuration is given by $X \times \mathbb{P}^1$ with the $\mathbb{C}^*$-action given as the diagonal action induced by a one-parameter subgroup $\lambda: \mathbb{C}^* \rightarrow \mathrm{Aut}(X)$, i.e. $\tau \cdot (x,z) := (\lambda(\tau)\cdot x, \tau z)$. The canonical morphism associated to the induced product configuration is given by $\mu: X \times \mathbb{P}^1 \rightarrow X \times \mathbb{P}^1$ is given by $(x,z) \mapsto (\lambda^{-1}(z) \cdot x, z)$. 
We now compute the geodesic ray associated to $(\mathcal{X},\mathcal{A})$: In the notation of \cite[Lemma 4.6]{SD1}, let $M := \pi^{-1}(\bar{\Delta}) \subset \mathcal{X} $. It is a smooth complex manifold with boundary $\partial M = \pi^{-1}(S^1)$. Let $D$, $\theta_D$, $\psi_D$ and $\Omega$ be as in Section \ref{Section energy functional asymptotics}. Since $\Omega$ is relatively K\"ahler there is an $\eta \in H^{1,1}(\mathbb{P}^1)$ such that $\Omega + \pi^*\eta$ is K\"ahler on $\mathcal{X}$. We may then write $\tilde{\Omega} = \Omega + \pi^*\eta + dd^cg$,
where $\tilde{\Omega}$ is a K\"ahler form on $\mathcal{X}$ and  $g \in \mathcal{C}^{\infty}(\mathcal{X})$. In a neighbourhood  of $\bar{\Delta}$ the form $\eta$ is further $dd^c$-exact, and so we write $\eta = dd^c (g' \circ \pi)$ for a smooth function $g' \circ \pi$ on $\bar{\Delta}$. In order to construct the weak geodesic ray associated to $(\mathcal{X},\mathcal{A})$ we consider the following homogeneous complex Monge-Amp\'ere equation 
\[
    (\star)  \left\{
                \begin{array}{ll}
                  (\tilde{\Omega} + dd^c\tilde{\Psi})^{n+1} = 0 \; \; \mathrm{on}\; \mathrm{Int}(M) \\
                  \tilde{\Psi}_{\vert \partial M} = \varphi_0 + \psi_D - g' -g 
                \end{array}
              \right.
  \] 
\noindent The solution is given by the Perron envelope
$$
\tilde{\Psi} = \sup \{ \xi \in \mathrm{PSH}(M,\Omega): \xi_{\vert \partial M} \leq \varphi_0 + \psi_D - g' -g \}
$$
which simply equals the constant function $\phi_0 + \psi_D - g' -g$ here. As explained in the construction (see \cite[Lemma 4.6]{SD1}) the weak geodesic ray $(\varphi_t)_{t \geq 0}$ is defined by asking that $S^1$-invariant function $\Phi(x,e^{-t+is}) = \varphi_t(x)$ on $X \times \Delta^*$ satisfies 
$
\mu^*(p_1^*\omega + dd^c\Phi) = \tilde{\Omega} + dd^c\tilde{\Phi}. 
$
Since the latter form identifies with $p_1^*\omega_{\varphi_0}$ we see from the above that $\lambda(\tau)^*\omega_{\varphi_0} = \omega_{\varphi_t}$. In particular, if the geodesic ray is normalized so that $\mathrm{E}(\varphi_t) = 0$ for each $t$, then by definition of the action of $G := \mathrm{Aut}_0(X)$ on the space $\mathcal{H} \cap E^{-1}(0)$ of normalized K\"ahler potentials (cf. Section \ref{Section G-action}) it follows that 
$
\lambda(\tau)^*\omega_{\varphi_0} = \omega + dd^c\lambda(\tau).\varphi_0.  
$ 
This finishes the proof.
\end{proof}

\begin{cor}
Let $(\mathcal{X},\mathcal{A})$ be a product test configuration for any \emph{(not necessarily cscK)} pair $(X,\alpha)$, induced by a $\mathbb{C}^*$-action $\lambda$ on $X$. Let $V_{\lambda}$ be the infinitesimal generator of $\rho$. Then 
$$
\mathrm{DF}(\mathcal{X},\mathcal{A}) = \mathrm{Fut}_{\alpha}(V_{\lambda}).
$$ 
In particular, if $(X,\alpha)$ is K-semistable, then $\mathrm{Fut}_{\alpha}(V) = 0$ for each $V \in \mathfrak{h}$. 
\end{cor}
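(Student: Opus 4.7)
The plan is to combine Proposition \ref{Prop product vanish} with Proposition \ref{Prop Mabuchi constant} and the general asymptotic slope formula for the Mabuchi functional. For the product test configuration $(\mathcal{X},\mathcal{A}) = (X \times \mathbb{P}^1, p_1^*\alpha, \lambda, p_2)$ the central fiber is a single reduced copy of $X$, hence
$$
\mathrm{DF}(\mathcal{X},\mathcal{A}) = \mathrm{M}^{\mathrm{NA}}(\mathcal{X},\mathcal{A}),
$$
and by Theorem \ref{Theorem generalized thm C}, for any base potential $\varphi_0 \in \mathcal{H}_0$, the right hand side equals $\lim_{t\to\infty} t^{-1} \mathrm{M}(\psi_t)$ along any smooth $\mathcal{C}^\infty$-compatible ray $(\psi_t)$ emanating from $\varphi_0$.

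Next, Proposition \ref{Prop product vanish} identifies the unique geodesic ray associated with $(\mathcal{X},\mathcal{A})$ as $\varphi_t = \exp(tJV_\lambda).\varphi_0$. Applying the derivative computation inside the proof of Proposition \ref{Prop Mabuchi constant}, which relies only on the equivariance $f_t^*S(\omega) = S(f_t^*\omega)$ and on the normalization of the Hamiltonian potential (and makes no use of any cscK hypothesis on $\varphi_0$), one obtains
$$
\frac{d}{dt}\mathrm{M}(\varphi_t) = \mathrm{Fut}_\alpha(V_\lambda)
$$
for every $t \geq 0$. Since $\mathrm{M}^{\mathrm{NA}}(\mathcal{X},\mathcal{A})$ is an invariant of the test configuration and is recovered as the asymptotic slope of $\mathrm{M}$ along any compatible smooth ray, matching slopes yields $\mathrm{DF}(\mathcal{X},\mathcal{A}) = \mathrm{Fut}_\alpha(V_\lambda)$.

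For the K-semistability consequence, suppose $(X,\alpha)$ is K-semistable and let $V \in \mathfrak{h}$. If $V$ integrates to a $\mathbb{C}^*$-action $\lambda$ on $X$, then both $\lambda$ and the inverse subgroup $\tau \mapsto \lambda(\tau^{-1})$ produce product test configurations whose Donaldson--Futaki invariants are $\mathrm{Fut}_\alpha(V)$ and $\mathrm{Fut}_\alpha(-V) = -\mathrm{Fut}_\alpha(V)$ respectively; K-semistability forces $\mathrm{Fut}_\alpha(V) = 0$. An arbitrary $V \in \mathfrak{h}$ is then handled by density: every Hamiltonian Killing field is conjugate into the Lie algebra of a maximal compact torus $T \subset \mathrm{Aut}_0(X)$, in which generators of $S^1$-subgroups form a $\mathbb{Q}$-dense lattice, so $\mathbb{R}$-linearity and continuity of $\mathrm{Fut}_\alpha$ propagate the vanishing to all of $\mathfrak{h}$.

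The main obstacle I expect is the matching step in the first part: reconciling the asymptotic slope computed along the merely $\mathcal{C}^{1,1}$-regular geodesic ray with $\mathrm{M}^{\mathrm{NA}}$ as given by Theorem \ref{Theorem generalized thm C}, which a priori concerns a smooth $\mathcal{C}^\infty$-compatible subgeodesic. What is needed is that the asymptotic slope of $\mathrm{M}$ depends on an $L^\infty$-compatible ray only through its compatibility class; this should follow from the fact that both the energy term and the entropy term in Chen's formula \eqref{Chens formula} depend on the ray only through its singularity type, up to sublinear error, but this point deserves a careful verification before the slopes can be identified.
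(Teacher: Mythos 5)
Your route is the same as the paper's: Proposition \ref{Prop product vanish} identifies the geodesic ray of the product configuration with the flow ray, the derivative computation in Proposition \ref{Prop Mabuchi constant} (which indeed uses no cscK hypothesis) gives slope $\mathrm{Fut}_\alpha(V_\lambda)$, and the non-Archimedean slope formula plus reducedness of the central fiber gives $\mathrm{DF}=\mathrm{M}^{\mathrm{NA}}=\lim_t t^{-1}\mathrm{M}(\varphi_t)$. The only real issue is the matching step you flag at the end, and there your proposed fix is the wrong tool: Theorem \ref{Theorem generalized thm C} is an existence statement (it produces \emph{one} $C^\infty$-compatible ray), not a statement valid for \emph{any} such ray, and the claim that the slope of $\mathrm{M}$ depends only on the $L^\infty$-compatibility class is not available in the paper and is not obvious --- the entropy term in \eqref{Chens formula} is not controlled by a uniform bound on the difference of potentials, so ``sublinear error via Chen's formula'' would itself require proof. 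Fortunately no such comparison is needed here: a product test configuration has smooth total space $X\times\mathbb{P}^1$ and dominates $X\times\mathbb{P}^1$ via the morphism of Example \ref{Example product}, so by the remark following Theorem \ref{Theorem generalized thm C intro} (i.e.\ \cite[Theorem C]{SD1}, which is what the paper's proof cites) the slope formula holds directly for the associated geodesic ray; moreover, by Proposition \ref{Prop product vanish} and Remark \ref{Cor product geodesic} that geodesic ray is itself smooth, being the flow ray $\lambda(\tau)^*\omega=\omega_{\varphi_t}$ up to additive constants (which $\mathrm{M}$ does not see), and it coincides with the ray constructed in the proof of Theorem \ref{Theorem generalized thm C} when one takes $\Omega=p_1^*\omega$. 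With either observation the slopes match and the identity $\mathrm{DF}(\mathcal{X},\mathcal{A})=\mathrm{Fut}_\alpha(V_\lambda)$ follows exactly as in the paper.

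For the second assertion you actually give more detail than the paper, and the sign trick ($\lambda$ and $\lambda^{-1}$ have generators $V_\lambda$ and $-V_\lambda$, and both product configurations are normal and relatively K\"ahler) is correct. One point to tighten in the density step: $\mathrm{Fut}_\alpha$ and the product construction are only defined on $\mathfrak{h}$, and rational elements of the Lie algebra of a \emph{maximal} compact torus need not be Hamiltonian (think of translation-type Killing fields). The clean fix is to replace the maximal torus by the closure $T_V$ of the flow of $V$ in the isometry group of $(X,\omega)$: since $V$ is Hamiltonian it has a zero $p$ (LeBrun--Simanca), every element of $T_V$ fixes $p$, hence every field in $\mathrm{Lie}(T_V)$ vanishes at $p$ and is therefore Hamiltonian; rational points are dense in $\mathrm{Lie}(T_V)$, each generates a $\mathbb{C}^*$-action to which the first part and K-semistability apply, and linearity and continuity of $V\mapsto\mathrm{Fut}_\alpha(V)$ (clear from the normalization $\int_X h^V_\omega\,\omega^n=0$) then give $\mathrm{Fut}_\alpha(V)=0$.
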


\begin{proof}
It then follows from Theorem \cite[Theorem C]{SD1} and a calculation identical to that in the proof of Proposition \ref{Prop Mabuchi constant} that 
$$
\mathrm{DF}(\mathcal{X},\mathcal{A}) = \lim_{t \rightarrow +\infty} \frac{\mathrm{M}(\varphi_t)}{t} = \mathrm{Fut}_{\alpha}(V_{\lambda}).
$$  
In particular, if $(X,\alpha)$ is cscK then $\mathrm{DF}(\mathcal{X},\mathcal{A}) = 0$. 
\end{proof}

\noindent This motivates the definition of \emph{geodesic K-polystability}, which is natural from an analytic point of view:  

\begin{mydef} \label{Definition geodesic Kps}
The pair $(X,\alpha)$ is said to be \emph{geodesically K-polystable} if $\mathrm{DF}(\mathcal{X},\mathcal{A}) \geq 0$ for all relatively K\"ahler test configurations $(\mathcal{X},\mathcal{A})$, with equality if and only if the geodesic ray associated to $(\mathcal{X},\mathcal{A})$ is of the form $\varphi_t = \exp(tJV).\varphi_0$ for some real holomorphic Hamiltonian vector field $V \in \mathfrak{h}$. 
\end{mydef}

\noindent Indeed, if $(X,\alpha)$ is K-semistable then 
it follows from the above that test configurations with vanishing Donaldson-Futaki invariant are \emph{precisely} of this form $\varphi_t = \exp(tJV).\varphi_0$ for some $V \in \mathfrak{h}$. In general, the geodesic K-polystability notion compares to K-polystability for K\"ahler manifolds (Definition \ref{Def Kps}) as follows: 



\begin{prop}
If $(X,\alpha)$ is K-polystable then it is geodesically K-polystable. 
\end{prop}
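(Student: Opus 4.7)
Assume $(X,\alpha)$ is K-polystable. The K-semistability inequality $\mathrm{DF}(\mathcal{X},\mathcal{A}) \geq 0$ is built into the definition of K-polystability, so it remains to show that $\mathrm{DF}(\mathcal{X},\mathcal{A}) = 0$ if and only if the associated (normalized) geodesic ray has the form $\varphi_t = \exp(tJV).\varphi_0$ for some $V \in \mathfrak{h}$.

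For the forward direction, assume $\mathrm{DF}(\mathcal{X},\mathcal{A}) = 0$. By Definition \ref{Def Kps}, there is a $\mathbb{C}^*$-equivariant isomorphism $\mathcal{X}_{\vert \pi^{-1}(\mathbb{C})} \simeq X \times \mathbb{C}$, under which the $\mathbb{C}^*$-action on $X \times \mathbb{C}$ takes the diagonal form $\tau \cdot (x, z) = (\lambda(\tau) x, \tau z)$ for a one-parameter subgroup $\lambda : \mathbb{C}^* \to \mathrm{Aut}(X)$. Let $V := V_\lambda$ be the corresponding real holomorphic vector field. Since any non-trivial $\mathbb{C}^*$-orbit in the compact space $\mathcal{X}_0 \simeq X$ has non-empty boundary consisting of $\mathbb{C}^*$-fixed points, $V$ has zeros and is therefore Hamiltonian, i.e.\ $V \in \mathfrak{h}$. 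Next I would compute the associated normalized geodesic ray. After applying the projection $\mathcal{P}$ of Proposition \ref{Prop basic properties of projection} so that $E(\varphi_t) = 0$, the Perron envelope construction of \cite[Lemma 4.6]{SD1} takes place entirely on $M := \pi^{-1}(\bar{\Delta}) \subset \pi^{-1}(\mathbb{C})$, that is, inside the product region $X \times \bar{\Delta}$. The argument in the proof of Proposition \ref{Prop product vanish} then applies verbatim and yields $\omega_{\varphi_t} = \lambda(e^{-t})^*\omega_{\varphi_0}$, which translates via the $G$-action on $\mathcal{H}_0$ (Section \ref{Section G-action}) to $\varphi_t = \exp(tJV).\varphi_0$.

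For the converse direction, suppose the normalized geodesic ray of $(\mathcal{X},\mathcal{A})$ equals $\varphi_t = \exp(tJV).\varphi_0$ for some $V \in \mathfrak{h}$. By Proposition \ref{Prop product vanish}, this same ray is the associated geodesic of the product test configuration $(\mathcal{X}',\mathcal{A}') = (X \times \mathbb{P}^1, p_1^*\alpha)$ induced by the one-parameter subgroup generated by $V$. Two relatively K\"ahler test configurations admitting the same $L^\infty$-compatible ray must coincide by the injectivity Lemma \ref{Lemma unique model}, so in particular $\mathcal{X}_{\vert \pi^{-1}(\mathbb{C})} \simeq X \times \mathbb{C}$ equivariantly. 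Then the corollary after Proposition \ref{Prop product vanish} identifies $\mathrm{DF}(\mathcal{X},\mathcal{A}) = \mathrm{Fut}_\alpha(V)$, and since K-polystability implies K-semistability, $\mathrm{Fut}_\alpha$ vanishes identically on $\mathfrak{h}$, giving $\mathrm{DF}(\mathcal{X},\mathcal{A}) = 0$.

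The main obstacle is the bookkeeping of normalizations: the class $\mathcal{A}_{\vert \pi^{-1}(\mathbb{C})}$ may differ from $p_1^*\alpha$ by a multiple $c[\mathcal{X}_0]$ of the central fiber class, which shifts the associated ray by a linear term $ct$. This ambiguity must be absorbed consistently via the projection $\mathcal{P}$ (equivalently, by interpreting Definition \ref{Definition geodesic Kps} at the level of the K\"ahler metric $\omega_{\varphi_t}$ rather than the potential), so that Lemma \ref{Lemma unique model} can be invoked on representatives in the same projection orbit and the translation from the equality case of Definition \ref{Def Kps} to the form required by Definition \ref{Definition geodesic Kps} is valid.
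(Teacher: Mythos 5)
Your overall route is the paper's: the paper proves this proposition by a one-line appeal to Proposition \ref{Prop product vanish} (K-polystability converts the equality case $\mathrm{DF}(\mathcal{X},\mathcal{A})=0$ into a product test configuration, whose associated geodesic ray is $\exp(tJV_{\lambda}).\varphi_0$), and your converse direction --- matching the ray with the product configuration induced by $V$, invoking the injectivity Lemma \ref{Lemma unique model}, and then using $\mathrm{DF}=\mathrm{Fut}_{\alpha}(V_{\lambda})$ together with the vanishing of the Futaki character for K-semistable pairs --- is exactly the content of the corollary to Proposition \ref{Prop product vanish} and of the remark following Proposition \ref{Prop comparison polarized}. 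Your bookkeeping of the shift $\mathcal{A}\mapsto\mathcal{A}-c[\mathcal{X}_0]$ via the projection $\mathcal{P}$ is likewise consistent with Proposition \ref{Prop basic properties of projection}. So in structure the proposal is the paper's argument, written out in considerably more detail.

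Two caveats. First, your justification that $V_{\lambda}$ is Hamiltonian is wrong as stated: a $\mathbb{C}^*$-action on a compact K\"ahler manifold need not have fixed points, and a $\mathbb{C}^*$-orbit need not have non-empty boundary --- an orbit can be compact, e.g.\ the elliptic curve $\mathbb{C}^*/q^{\mathbb{Z}}$ acted on by multiplication, or a translation action on a complex torus, where every orbit closure is a subtorus containing no fixed point. Since a real holomorphic vector field is Hamiltonian if and only if it vanishes somewhere (LeBrun--Simanca, as recalled in Section \ref{Section holomorphic vector fields and projection}), $V_{\lambda}$ need not lie in $\mathfrak{h}$. To be fair, the paper passes over the same point silently (Proposition \ref{Prop product vanish} makes no Hamiltonian claim about $V_{\lambda}$, while Definition \ref{Definition geodesic Kps} requires $V\in\mathfrak{h}$), so your write-up is no worse off than the printed proof here; but the fixed-point argument should be deleted or replaced by an argument that genuinely produces an element of $\mathfrak{h}$ (or the convention made explicit). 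Second, your converse tacitly assumes that $V$ integrates to a one-parameter subgroup $\mathbb{C}^*\to\mathrm{Aut}(X)$, so that ``the product test configuration induced by $V$'' exists and Lemma \ref{Lemma unique model} applies; the paper's remark after Proposition \ref{Prop comparison polarized} makes the same restriction, but for irrational $V\in\mathfrak{h}$ the injectivity argument does not literally apply, so this hypothesis should be stated.
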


\begin{proof}
This is an immediate consequence of Proposition \ref{Prop product vanish}. 
\end{proof}

\noindent 
\noindent In case the automorphism group is discrete these notion are in fact equivalent, and we expect this to hold in general. Interestingly, it can however be proven that geodesic K-polystability is \emph{stronger} than the classical algebraic K-polystability for polarized manifolds due to Donaldson \cite{Donaldsontoric} 

\begin{prop} \label{Prop comparison polarized} 
Let $(X,\omega)$ be a compact K\"ahler manifold with $\alpha := [\omega] \in H^{1,1}(X,\mathbb{R})$ the associated K\"ahler class. 
\begin{enumerate}
\item If $\alpha = c_1(L)$ for some ample line bundle $L$ over $X$, then geodesic K-polystability implies algebraic K-polystability in the classical sense of Donaldson \cite{Donaldsontoric}.
\item If $\mathrm{Aut}(X)$ is discrete, then $(X,\alpha)$ is geodesically K-polystable if and only if $(X,\alpha)$ is K-polystable in the sense of \cite{SD1}. 
\end{enumerate}
\end{prop}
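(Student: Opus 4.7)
The plan is to deduce both parts from Proposition \ref{Prop product vanish}, which identifies the normalized geodesic ray of a product test configuration as $\varphi_t = \exp(tJV_\lambda).\varphi_0$, combined with the injectivity Lemma \ref{Lemma unique model}, which recovers a normal relatively K\"ahler test configuration from the singularity type of its associated geodesic ray. The projection operator $\mathcal{P}$ of Proposition \ref{Prop basic properties of projection} will be used to pass to the normalized setting in which these tools apply.

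For part (1), I would begin with an algebraic test configuration $(\mathcal{X},\mathcal{L})$ for $(X,L)$ and pass to its cohomological counterpart $(\mathcal{X},c_1(\mathcal{L}))$, observing that $\mathrm{DF}(\mathcal{X},\mathcal{L}) = \mathrm{DF}(\mathcal{X},c_1(\mathcal{L}))$, so that K-semistability transfers immediately. For the equality case, geodesic K-polystability ensures that the normalized geodesic ray $\mathcal{P}(\varphi_t)$ has the form $\exp(tJV).\varphi_0$ for some $V \in \mathfrak{h}$; by Proposition \ref{Prop product vanish} this is precisely the normalized geodesic ray of the product cohomological test configuration $(X \times \mathbb{P}^1, p_1^*c_1(L), \lambda_V)$ induced by the one-parameter subgroup generated by $V$. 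Applying Lemma \ref{Lemma unique model} to the projections of the two test configurations yields a $\mathbb{C}^*$-equivariant isomorphism of total spaces $\mathcal{X} \cong X \times \mathbb{P}^1$, hence in particular $\mathcal{X}_{|\pi^{-1}(\mathbb{C})}$ is $\mathbb{C}^*$-equivariantly isomorphic to $X \times \mathbb{C}$. The conclusion then follows from the proposition already proven in the excerpt, which passes from cohomological K-polystability (in the sense of Definition \ref{Def Kps}) of $(X,c_1(L))$ to algebraic K-polystability of $(X,L)$ in Donaldson's sense.

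For part (2), I would specialize the same machinery using the fact that $\mathrm{Aut}(X)$ discrete forces $\mathfrak{h} = 0$. The forward direction proceeds as in (1): if $(X,\alpha)$ is geodesically K-polystable and $\mathrm{DF}(\mathcal{X},\mathcal{A}) = 0$, then the only admissible form for the normalized geodesic ray is $\exp(tJ \cdot 0).\varphi_0 = \varphi_0$ (constant), and Lemma \ref{Lemma unique model} applied against the trivial product $(X \times \mathbb{P}^1, p_1^*\alpha,\mathrm{id})$ gives $\mathcal{X} \cong X \times \mathbb{P}^1$ equivariantly, yielding K-polystability. For the converse, K-polystability gives $\mathcal{X}_{|\pi^{-1}(\mathbb{C})} \cong X \times \mathbb{C}$ equivariantly; combined with the equivariant isomorphism over $\mathbb{P}^1 \setminus \{0\}$ built into the definition of a test configuration, we extend to $\mathcal{X} \cong X \times \mathbb{P}^1$ equivariantly, and discreteness of $\mathrm{Aut}(X)$ forces the induced $\mathbb{C}^*$-action on $X$ to be trivial. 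The polarization $\mathcal{A}$ must then differ from $p_1^*\alpha$ by a multiple of $[\mathcal{X}_0]$, and Proposition \ref{Prop basic properties of projection}(4) identifies $\mathcal{P}(\mathcal{X},\mathcal{A})$ with the trivial product, whose normalized geodesic ray is constant by Proposition \ref{Prop product vanish}. Hence the normalized geodesic associated to $(\mathcal{X},\mathcal{A})$ is of the required form $\exp(tJV).\varphi_0$ with $V = 0 \in \mathfrak{h}$, so geodesic K-polystability holds.

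The main obstacle I anticipate is the careful bookkeeping of the normalization of geodesic rays via $\mathcal{P}$: in the converse direction of (2) one must ensure that a vanishing Donaldson-Futaki invariant combined with triviality of the total space genuinely forces the normalized ray to be constant, which requires tracking how $\mathcal{A}$ and $p_1^*\alpha$ differ by a class supported on $\mathcal{X}_0$ and verifying that this difference is absorbed into the projection. This is conceptually simple but notationally delicate, and it is crucial to avoid circularity by invoking Proposition \ref{Prop product vanish} for the trivial product before appealing to Lemma \ref{Lemma unique model}.
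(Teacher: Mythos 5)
Your part (2) is essentially the paper's argument (constant ray, then Lemma \ref{Lemma unique model} against the trivial configuration; the converse via Proposition \ref{Prop product vanish} and the projection), and the normalization bookkeeping you worry about is handled exactly as you suggest. The problem is part (1). Geodesic K-polystability only tells you that the geodesic ray has the form $\varphi_t = \exp(tJV).\varphi_0$ for \emph{some} $V \in \mathfrak{h}$, and such a $V$ need not generate a one-parameter subgroup $\lambda: \mathbb{C}^* \rightarrow \mathrm{Aut}(X)$: the flow of $JV$ together with that of $V$ gives an additive $\mathbb{C}$-action whose closure is a torus, and unless $V$ is rational with respect to that torus there is no $\mathbb{C}^*$-action, hence no product cohomological test configuration $(X \times \mathbb{P}^1, p_1^*c_1(L), \lambda_V)$ to feed into Proposition \ref{Prop product vanish} and Lemma \ref{Lemma unique model}. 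So the step ``this is precisely the normalized geodesic ray of the product test configuration induced by the one-parameter subgroup generated by $V$'' is unjustified, and the injectivity lemma cannot be applied as you propose. The paper is explicit about this restriction: the remark following its proof only runs your comparison argument ``for some $V \in \mathfrak{h}$ that generates a $\mathbb{C}^*$-action on $X$''.

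The paper closes this gap differently: in the polarized setting it quotes \cite[Lemma 3.2 and Proposition 3.3]{BDL}, which show (using the algebraic structure of ample test configurations, not the injectivity lemma) that an algebraic test configuration whose associated geodesic ray is induced by the flow of a possibly irrational holomorphic Hamiltonian vector field must be a product configuration; algebraic K-polystability in Donaldson's sense then follows, together with the observation that product configurations have vanishing Donaldson--Futaki invariant by K-semistability. To repair your argument without BDL you would need an additional mechanism for irrational $V$ (for instance the twisting/approximation by rational elements of the maximal torus, as in the ``irrational test configurations'' of the appendix), rather than a direct appeal to Lemma \ref{Lemma unique model}.
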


\begin{proof}
First note that K-semistability is already known for arbitrary K\"ahler manifolds $(X,\alpha)$ such that $\alpha$ admits a cscK representative, see \cite[Theorem A]{SD1}.
In order to prove $(1)$, suppose that $(\mathcal{X},\mathcal{L})$ is a relatively ample test configuration for $(X,L)$ such that $\mathrm{DF}(\mathcal{X},\mathcal{L}) = 0$. Then $(\mathcal{X},c_1(\mathcal{L}))$ is a relatively K\"ahler cohomological test configuration for $(X,c_1(L))$ such that $\mathrm{DF}(\mathcal{X}, c_1(\mathcal{L})) = 0$. By Theorem \ref{Theorem analytic product conditions} it then follows that its associated geodesic ray is of the form $\exp(tJV).\varphi_0$ for some real holomorphic Hamiltonian vector field $V \in \mathfrak{h}$. By \cite[Lemma 3.2 and Proposition 3.3]{BDL} it then follows that $(\mathcal{X},\mathcal{L})$ is a product configuration. Conversely, by Corollary \ref{Cor product geodesic} product configurations for $(X,\alpha)$ always have vanishing Donaldson-Futaki invariant (since it is K-semistable). 

The second part $(2)$ follows directly from Theorem \ref{Theorem equivalent characterizations discrete}. Indeed, since there are no real holomorphic Hamiltonian vector fields on $X$ it follows that if $\mathrm{DF}(\mathcal{X},\mathcal{A}) = 0$ for a relatively K\"ahler test configuration for $(X,\alpha)$, then the associated geodesic ray is the constant ray. Since the geodesic ray associated to the trivial test coniguration $(X \times \mathbb{P}^1, p_1^*\omega)$ (endowed with the trivial $\mathbb{C}^*$-action) it follows from the key injectivity lemma \ref{Lemma unique model} that $(\mathcal{X},\mathcal{A})$ must be the trivial test configuration. This concludes the proof.   
\end{proof}

\begin{rem} Suppose that $(\mathcal{X},\mathcal{A})$ is a relatively K\"ahler test configuration for $(X,\alpha)$ such that the associated geodesic ray is of the form $\varphi_t = \exp(tJV).\varphi_0$ for some $V \in \mathfrak{h}$ that generates a $\mathbb{C}^*$-action on $X$. 
Then $(\varphi_t)$ is compatible with the product test configuration $(\mathcal{X}_V, \mathcal{A}_V)$
for $(X,\alpha)$ induced by $V$ in the sense of Example \ref{Example product}.
Again, by the injectivity lemma \ref{Lemma unique model} this concludes that $(\mathcal{X},\mathcal{A})$ is $\mathbb{C}^*$-equivariantly isomorphic to the product test configuration $(\mathcal{X}_V,\mathcal{A}_V)$ induced by $V$.    
\end{rem}

\noindent Finally note that the equivalence $(1) \Leftrightarrow (5)$ in Theorem \ref{Theorem analytic product conditions}, combined with \cite[Theorem A]{SD1}, yields the following main result (Theorem \ref{Main geodesic Kps thm}): 

\begin{thm} Suppose that $(X,\omega)$ is a compact K\"ahler manifold. If the K\"ahler class $\alpha := [\omega] \in H^{1,1}(X,\mathbb{R})$ admits a cscK representative, then $(X,\alpha)$ is geodesically K-polystable. 
\end{thm}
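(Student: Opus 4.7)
The plan is to combine two ingredients established earlier in the paper: K-semistability of cscK manifolds from \cite[Theorem A]{SD1} supplies the inequality required in Definition \ref{Definition geodesic Kps}, while the equivalence $(1) \Leftrightarrow (5)$ of Theorem \ref{Theorem analytic product conditions} characterizes the equality case. The only technical obstacle is the normalization $\mathrm{E}(\varphi_t) = 0$ that is built into Theorem \ref{Theorem analytic product conditions}, and this is handled via the projection operator $\mathcal{P}$ from Section \ref{Section characterization vanishing}.

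First I would invoke \cite[Theorem A]{SD1} to record that every normal and relatively K\"ahler test configuration $(\mathcal{X},\mathcal{A})$ for $(X,\alpha)$ satisfies $\mathrm{DF}(\mathcal{X},\mathcal{A}) \geq 0$, settling the inequality part of Definition \ref{Definition geodesic Kps}. It therefore suffices to identify the vanishing locus of $\mathrm{DF}$ with test configurations whose associated geodesic ray is of product form.

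For this equality characterization I would reduce to the normalized setting via $\mathcal{P}$. Given $(\mathcal{X},\mathcal{A})$ with geodesic ray $(\varphi_t)$, the projection $\mathcal{P}(\mathcal{X},\mathcal{A}) = (\mathcal{X}, \mathcal{A} - c[\mathcal{X}_0])$, where $c$ is the slope of the linear function $t \mapsto \mathrm{E}(\varphi_t)$ (linearity follows from Proposition \ref{Prop basic properties of projection}), has associated geodesic $\tilde{\varphi}_t = \varphi_t - \mathrm{E}(\varphi_t)$ satisfying $\mathrm{E}(\tilde{\varphi}_t) = 0$. A short intersection-theoretic computation, using $[\mathcal{X}_0]^2 = 0$ together with the cohomological expression for $\bar{\mathcal{S}}$, shows that $\mathrm{DF}$ is invariant under $\mathcal{P}$. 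Moreover, since $\varphi_t$ and $\tilde{\varphi}_t$ differ only by a $t$-linear additive constant on $X$ we have $\omega_{\varphi_t} = \omega_{\tilde{\varphi}_t}$ for every $t$, so the ``product form'' condition appearing in Definition \ref{Definition geodesic Kps} is likewise invariant under $\mathcal{P}$. Applying $(1) \Leftrightarrow (5)$ of Theorem \ref{Theorem analytic product conditions} to $\mathcal{P}(\mathcal{X},\mathcal{A})$ then gives $\mathrm{DF}(\mathcal{P}(\mathcal{X},\mathcal{A})) = 0$ if and only if $(\tilde{\varphi}_t)$ is of the form $\exp(tJV).\varphi_0$ for some $V \in \mathfrak{h}$. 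Transporting this back through the two invariances just noted yields the desired equivalence $\mathrm{DF}(\mathcal{X},\mathcal{A}) = 0 \Leftrightarrow (\varphi_t)$ is of product form, completing the proof.

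The main technical hurdle I foresee is precisely this normalization step, namely verifying that both $\mathrm{DF}$ and the ``product form'' condition transport cleanly between $(\mathcal{X},\mathcal{A})$ and $\mathcal{P}(\mathcal{X},\mathcal{A})$. With that in hand, the theorem is essentially a packaging statement assembling \cite[Theorem A]{SD1} with the equivalence $(1) \Leftrightarrow (5)$ of Theorem \ref{Theorem analytic product conditions}.
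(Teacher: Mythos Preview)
Your proposal is correct and follows essentially the same approach as the paper, which simply states that the result follows from the equivalence $(1) \Leftrightarrow (5)$ of Theorem \ref{Theorem analytic product conditions} combined with \cite[Theorem A]{SD1}. You are in fact more careful than the paper on one point: the paper does not spell out the passage through the projection $\mathcal{P}$ to meet the normalization hypothesis $\mathrm{E}(\varphi_t)=0$ of Theorem \ref{Theorem analytic product conditions}, whereas you make this explicit (and your verification that $\mathrm{DF}$ and the product-form condition are $\mathcal{P}$-invariant is correct).
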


\noindent Note that if we specialize to the case of polarized manifolds, then we in particular recover \cite[Theorem 1.6]{BDL}. This is due to  Proposition \ref{Prop comparison polarized} above. It would be interesting to clarify if the above statement is in fact strictly stronger than the usual statement for polarized manifolds involving K-polystability in the sense of Donaldson \cite{Donaldsontoric}. This question relates to the influential examples of \cite{ACGTF}.

\medskip

\subsection{Acknowledgements}
The author would like to thank Julius Ross, Ruadha\'i Dervan, S\'ebastien Boucksom, Vincent Guedj, Jacopo Stoppa, Julien Keller, Eveline Legendre and Paul Gauduchon for helpful discussions related to this work. This work was produced while the author was supported by a PhD grant at Universit\'e Paul Sabatier, Toulouse, joint with \'Ecole Polytechnique, Paris.

\bigskip 

\section{Appendix: Triviality of K\"ahler test configurations - by R. Dervan}

\noindent The goal of this appendix is to clarify some aspects of the triviality of K\"ahler test configurations, using Sj\"ostr\"om Dyrefelt's injectivity result, an idea of Codogni-Stoppa \cite{cs} and some ideas from \cite{Dervanrelative}. In particular this removes the various assumptions on the norms in \cite{Dervanrelative}, thus proving that the existence of an extremal metric implies relative K-stability, and hence also proving that existence of a cscK metric implies equivariant K-polystability.

Fix a K\"ahler manifold $X$ with a K\"ahler class $\alpha$ and let $T \subset \Aut(X,\alpha)$ be a maximal torus, of dimension $d$ say, of automorphisms with Lie algebra $\mft$. We say that a test configuration $(\X,\A,\lambda)$ with $\C^*$-action $\lambda$ is $T$\emph{-invariant} if it admits a $T$-action which commutes with $\pi$ and $\lambda$, extending the action of $T$ on the general fibre. Fix a basis $\beta_1,\hdots \beta_d$ for $T$, and let $\Omega\in\A$ be a relatively K\"ahler metric which is invariant under a maximal compact subgroup of $T$. For each $i$ one can define a family $\varphi^i_t$ of smooth functions by setting $$\beta_i(t)^*\Omega - \Omega = \ddb \varphi^i_t.$$ One can then show that $h_{\beta_i}:=\beta_i(t)_*\dot \varphi(t)$ is a smooth function on $\X$ which is \emph{independent} of $t$ \cite[Definition 2.19]{Dervanrelative}, called (slightly abusively) the \emph{Hamiltonian} of $\beta_i$. By an identical procedure one obtains a Hamiltonian for $\lambda$, denoted $h_{\lambda}$, and one can similarly extend the definition of the Hamiltonian for the flow of arbitrary elements of $\mft$, not just the rational ones (which induce $\C^*$-actions).

\begin{mydef}\cite[Definition 2.19]{Dervanrelative} We define the \emph{inner product} of $\C^*$-actions $\gamma_1,\gamma_2$ on $\X$ (or elements of $\mft$) to be $$\langle \gamma_1,\gamma_2 \rangle = \int_{\X_0} (h_{\gamma_1} - \hat h_{\gamma_1} )(h_{\gamma_2} - \hat h_{\gamma_2}) \Omega_0^n,$$ where $\hat h_{\gamma_1} $ denotes the average value of $h_{\gamma_1}$ on $\X_0$. Similarly we define the \emph{norm} of $(\X,\A,\lambda)$ to be $\|(\X,\A,\lambda)\|_2^2 = \langle \lambda,\lambda\rangle.$\end{mydef}

Recall that to a vector field $\nu \in \mft$ one can associate the Futaki invariant $F(\nu)$. This value is simply the Donaldson-Futaki invariant of the product test configuration when $\nu$ is rational.

\begin{mydef} \label{Appendix def equivariant stability} \cite[Definition 2.23]{Dervanrelative} Choose an orthogonal basis $\beta_i$ for $T$. We define the \emph{relative Donaldson-Futaki invariant} $$\DF_T(\X,\A) = \DF(\X,\A) - \sum_{i=1}^d \frac{\langle \lambda,\beta_i \rangle}{\langle \beta_i,\beta_i \rangle}F(\beta_i).$$ We say that $(X,[\omega])$ is \emph{relatively K-stable} if $\DF_T(\X,\A) \geq 0$ for all $T$\emph{-invariant} test configurations, with equality if and only if $\X_0 \cong X$. We say that $(X,[\omega])$ is \emph{equivariantly K-polystable} if in addition $F(\beta_i)=0$ for all $i$.\end{mydef}

Fixing a $T$-invariant test configuration $(\X,\A,\lambda)$, for any $\C^*$-action $\gamma$ inside $T$ one obtains a test configuration $(\X,\A,\lambda\circ \gamma)$ by composition, and the point of the above definition is that the relative Donaldson-Futaki invariant is invariant under this operation \cite[Section 4]{Dervanrelative}, and hence one can define the Donaldson-Futaki invariant for a composition with any element of $\mft$. Thus one obtains a ``projected test configuration'' $\proj(\X,\A,\lambda) = (\X,\A,\gamma)$ which satisfies the property that $\langle \gamma, \beta_i\rangle = 0$ for all $\beta_i$ (here, strictly speaking, $\gamma$ is the flow of a possibly irrational vector field). It is also clear that one can define the norm for a composition with any element of $\mft$, since the Hamiltonian for such an object makes sense. Let us call such an object an ``irrational test configuration''. The following is a K\"ahler analogue of \cite[Lemma 41]{cs}.

\begin{lem}\label{lemma} We have $\|\proj(\X,\A,\lambda)\|_2 = 0$ if and only if $\X_0 \cong X$. \end{lem}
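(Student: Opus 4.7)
The plan is to reduce both directions to the injectivity lemma (Theorem~\ref{Main lemma unique model intro}) once the vanishing of the norm is reinterpreted as the triviality of the projected action on the central fiber. By definition,
\[
\|\proj(\X,\A,\lambda)\|_2^2 = \int_{\X_0}(h_\gamma - \hat h_\gamma)^2 \Omega_0^n,
\]
where $\gamma$ is the infinitesimal generator of the projected (generally irrational) flow. This vanishes precisely when $h_\gamma$ is constant on $\X_0$; since $h_\gamma$ is a Hamiltonian potential satisfying $V_\gamma \rfloor \Omega = i\bar\partial h_\gamma$, this in turn forces $V_\gamma$ to vanish along the smooth locus of $\X_0$, meaning the projected flow acts trivially on the central fiber.

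For the ``only if'' direction, I then extract from the triviality of $\gamma$ on $\X_0$ a ray on $X$ (built, as in Proposition~\ref{Prop product vanish}, from the flow of $V_\gamma$) whose asymptotic slope vanishes and which therefore coincides, up to a linear reparametrisation, with the constant ray at the reference K\"ahler potential $\varphi_0$. The constant ray is $L^\infty$-compatible with the trivial test configuration $(X \times \mathbb{P}^1, p_1^*\alpha)$, so the injectivity lemma identifies $(\X, \A)$ with this trivial configuration $\C^*$-equivariantly, yielding $\X_0 \cong X$. The converse is a short bookkeeping step: if $\X_0 \cong X$, then $(\X, \A)$ is $\C^*$-equivariantly isomorphic to a product, so $\lambda$ is induced by a one-parameter subgroup of $\Aut(X,\alpha)$, which after conjugation lies in $T$; writing its infinitesimal generator as $\sum c_i \beta_i \in \mft$, the orthogonality relations defining the projection give $V_\gamma|_{\X_0} \equiv 0$, a constant Hamiltonian, and a vanishing norm.

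The \emph{main obstacle} is that $\gamma$ is generally only an $\mathbb{R}$-action arising from an irrational element of $\mft$, so $(\X, \A, \gamma)$ is not literally a test configuration in the sense of Section~\ref{section test config} and the injectivity lemma does not apply verbatim. The strategy for circumventing this is to work directly with the associated subgeodesic rays, for which the compatibility notion of Definition~\ref{Definition compatibility} makes sense regardless of rationality and for which the K\"ahler-positivity argument in Lemma~\ref{Lemma unique model} goes through without the $\C^*$-structure; alternatively one approximates $\gamma$ by a sequence of rational elements $\gamma_k \to \gamma$ in $\mft$ and passes to the limit in the corresponding rays. A secondary subtlety is that $\X_0$ is typically singular and possibly non-reduced, so the implication ``constant Hamiltonian on $\X_0$ implies vanishing vector field on $\X_0$'' must be carried out on the smooth locus and extended by continuity.
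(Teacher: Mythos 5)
There is a genuine gap at the heart of your argument. Your first reduction is fine: by definition $\|\proj(\X,\A,\lambda)\|_2=0$ exactly means that $h_\gamma$ is constant on $\X_0$, and since $\Omega_0$ is K\"ahler on the smooth locus this does say the projected flow acts trivially on the central fiber. The problem is the next step, where you claim to ``extract a ray on $X$ built, as in Proposition \ref{Prop product vanish}, from the flow of $V_\gamma$, whose asymptotic slope vanishes''. First, $V_\gamma$ is a vector field on the total space $\X$ whose $\lambda$-part covers the rotation of $\mathbb{P}^1$; it does not induce a flow on $X$ unless $(\X,\A)$ is already known to be a product, which is the conclusion you are after. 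The ray actually attached to the twisted configuration is the family $\varphi_{\gamma,s}$ defined by comparing $\gamma(\tau)^*\Omega_\tau$ with $\Omega_1$, i.e.\ it is built entirely from the fibers $\X_\tau$, $\tau\neq 0$, and the constancy of $h_\gamma$ on $\X_0$ gives, by itself, no control whatsoever on the slope of $\mathrm{J}$ along that ray. Bridging ``trivial action on $\X_0$'' to ``the associated ray/geodesic is constant'' is precisely the nontrivial content of the lemma: the paper obtains it from the Lipschitz equivalence of the $L^1$-norm with the minimum norm $\mathrm{J}^{\NA}$ (\cite[Theorem 1.5]{Dervanrelative}), extended by continuity in $\mft$ to irrational twists, then Lemma \ref{Lemma change of rays} to transfer the resulting bound on $\mathrm{J}$ to the geodesic ray, whose constancy forces it to be the constant ray; only at that point does the injectivity Lemma \ref{Lemma unique model} enter. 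Without some substitute for this norm comparison your argument is circular: for normal test configurations, ``$\C^*$-action trivial on $\X_0$ implies $\X_0\cong X$'' is essentially equivalent to the statement being proved, and (as the almost-trivial test configurations of the projective theory illustrate) information concentrated on the central fiber alone is exactly the delicate point where normality must be exploited through a nontrivial mechanism, not by inspection.

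Two smaller remarks. Your proposed fixes for the irrationality of $\gamma$ (working with subgeodesic rays directly, or approximating by rational elements of $\mft$) are in the same spirit as the paper's continuity argument for the slope of $\mathrm{J}$ and for the norm, but as stated they do not repair the missing bridge above. The converse direction is essentially fine, though no conjugation is needed or allowed: since the test configuration is $T$-invariant, $\lambda$ commutes with the maximal torus $T$ and hence its generator already lies in $\mft$, so the orthogonal projection vanishes and the norm is zero.
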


\begin{proof} If $\proj(\X,\A,\gamma)$ were a genuine test configuration (as is the case when the data is projective), we can follow the argument Codogni-Stoppa use in the projective case. Then  the equality $\|\proj(\X,\A,\lambda)\|_2 = 0$ would imply that $J^{\NA}(\proj(\X,\A,\lambda)) = 0$ by  \cite[Theorem 1.5]{Dervanrelative} ($J^{\NA}$ is called the ``minimum norm'' in \cite{Dervanrelative}). But this would imply $\proj(\X,\A,\lambda)$ were the trivial test configuration by Theorem 1.9, and hence $(\X,\A,\lambda)$ must be a product test configuration, i.e. $\X_0 \cong X$.  

In general we argue as follows. Fix an irrational test configuration $(\X,\A,\gamma)$ and pick some $\Omega \in \A$ a relatively K\"ahler metric which is invariant under a maximal compact subgroup of $T$. One associates a family $\varphi_{\gamma,s}$ of K\"ahler potentials to $\Omega$ in such a way that, when $\gamma$ is rational, $\lim_{s\to \infty} \frac{J(\varphi_{\gamma,s})}{s} = J^{\NA}(\X,\A,\gamma)$. The construction of $\varphi_{\gamma,s}$ implies that $\lim_{s\to \infty} \frac{J(\varphi_{\gamma,s})}{s}$ is a continuous function of $\gamma$, and converges also for arbitrary elements of $\mft$, as follows from the Hamiltonian point of view of \cite{Dervanrelative}. We still denote by $J^{\NA}(\X,\A,\gamma)$ this extension. 


Since the $L^1$-norm is also continuous on $\mft$, and Lipschitz equivalent to $J^{\NA}(\X,\A,\gamma)$ by \cite[Theorem 1.5]{Dervanrelative}, it follows that $J^{\NA}(\proj(\X,\A,\lambda))=0$ and so $J(\varphi_{\gamma,s})$ is bounded independently of $s$. By Lemma 4.8 this in turn implies $J(\psi_{\gamma,s})$ is bounded independently of $s$ where $\psi$ denotes the corresponding ray induced by the geodesic (which is induced from a $L^{\infty}$-relatively K\"ahler metric $\eta$). It follows that the $L^1$-norm of the geodesic is zero, and hence $\psi_s$ is constant.  Therefore $\eta$ extends to a metric on the trivial test configuration by the results of Section 4, so by Theorem 1.9 and the argument of the paragraph above, $\X_0 \cong X$.  \end{proof}

Our main interest in the above is the following corollary of \cite[Theorem 1.2]{Dervanrelative}.

\begin{cor} \label{Appendix main result} If $(X,\alpha)$ admits an extremal metric, then it is relatively K-stable. In particular if $(X,\alpha)$ is cscK, then it is equivariantly K-polystable.  \end{cor}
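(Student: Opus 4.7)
The plan is to reduce relative K-stability to the case of a projected (possibly irrational) test configuration, and then invoke Lemma \ref{lemma}. By \cite[Theorem 1.2]{Dervanrelative} the existence of an extremal K\"ahler metric in $\alpha$ already yields $\DF_T(\X,\A) \geq 0$ for every $T$-invariant normal relatively K\"ahler test configuration $(\X,\A,\lambda)$, so only the equality case requires argument. Assume therefore $\DF_T(\X,\A) = 0$.

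I would next pass to the projected object $\proj(\X,\A,\lambda) = (\X,\A,\gamma)$ constructed in \cite[Section 4]{Dervanrelative}, whose flow $\gamma \in \mft$ is orthogonal (with respect to the pairing $\langle\cdot,\cdot\rangle$ defined on $\mft$ via Hamiltonians) to every basis element $\beta_i$. Since the relative Donaldson-Futaki invariant is invariant under composition with elements of $\mft$, we still have $\DF_T(\proj(\X,\A,\lambda)) = 0$; moreover, because $\gamma$ is orthogonal to all $\beta_i$, the cross-terms in the definition of $\DF_T$ vanish and this quantity reduces to the usual $\DF$ (extended continuously to irrational flows via the Hamiltonian framework of \cite{Dervanrelative}). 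Applying the continuous extension of the norm estimate of \cite[Theorem 1.5]{Dervanrelative} from rational to arbitrary elements of $\mft$ -- precisely the step carried out in the proof of Lemma \ref{lemma} -- yields $\|\proj(\X,\A,\lambda)\|_2 = 0$, and hence $\X_0 \cong X$ by Lemma \ref{lemma}. This proves relative K-stability. For the second assertion, a cscK metric is in particular extremal, and when $\alpha$ admits a cscK representative the Futaki character $\mathrm{Fut}_\alpha$ vanishes identically on $\mft$, so $F(\beta_i) = 0$ for $i = 1, \dots, d$, whence $(X,\alpha)$ is equivariantly K-polystable by Definition \ref{Appendix def equivariant stability}.

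The main obstacle is the extension of the norm and energy asymptotic estimates of \cite{Dervanrelative} from rational flows (which induce genuine $\C^*$-actions and hence honest test configurations) to arbitrary elements of $\mft$. Without this continuity the projected object $\proj(\X,\A,\lambda)$ need not be an algebraic test configuration, and the injectivity and triviality results of Sections \ref{Section 3}--\ref{Section analytic characterization} cannot be applied directly. This continuity is exactly what Lemma \ref{lemma} is designed to deliver, by combining the $L^\infty$--$J$ comparison of Section \ref{Section 3} with the injectivity Lemma \ref{Lemma unique model} from the main body of the paper; once it is in hand, the rest of the argument is bookkeeping.
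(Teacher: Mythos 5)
There is a genuine gap in your treatment of the equality case. Having assumed $\DF_T(\X,\A)=0$, you pass to the projected (irrational) test configuration $\proj(\X,\A,\lambda)=(\X,\A,\gamma)$ and then claim that ``the continuous extension of the norm estimate of \cite[Theorem 1.5]{Dervanrelative}'' yields $\|\proj(\X,\A,\lambda)\|_2=0$. But \cite[Theorem 1.5]{Dervanrelative} is a comparison between two \emph{norms} (the minimum norm $J^{\NA}$ and the $L^1$-norm); it says nothing about the Donaldson--Futaki invariant, so it cannot convert the vanishing of $\DF_T$ (or of $\DF$ of the projection) into the vanishing of $\|\cdot\|_2$. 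Likewise, Lemma \ref{lemma} cannot supply this step: its hypothesis is precisely $\|\proj(\X,\A,\lambda)\|_2=0$, and its content is the implication from norm-vanishing to $\X_0\cong X$, not the other way around. The implication you are missing, namely that for an extremal manifold $\DF_T(\X,\A)=0$ forces $\|\proj(\X,\A,\lambda)\|_2=0$, is exactly the analytic input of the extremal metric, and in the paper it is taken verbatim from the equality clause of \cite[Theorem 1.2]{Dervanrelative} (``$\DF_T(\X,\A,\lambda)\geq 0$, with equality if and only if $\|\proj(\X,\A,\lambda)\|_2=0$''). By quoting Theorem 1.2 only for the inequality $\DF_T\geq 0$ you discard precisely the part of the statement that your argument needs, and nothing in your chain (orthogonality of $\gamma$ to the $\beta_i$, reduction of $\DF_T$ to $\DF$, continuity in $\mft$) replaces it.

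Once that step is restored, the rest of your argument collapses to the paper's proof: Theorem 1.2 of \cite{Dervanrelative} gives $\DF_T\geq 0$ with equality iff $\|\proj(\X,\A,\lambda)\|_2=0$, Lemma \ref{lemma} upgrades the norm-vanishing to $\X_0\cong X$, and the cscK case follows since the Futaki character vanishes, so $F(\beta_i)=0$ for all $i$ in Definition \ref{Appendix def equivariant stability}. Your handling of this last point is correct and agrees with the paper.
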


\begin{proof} \cite[Theorem 1.2]{Dervanrelative} states that in this case $\DF_T(\X,\A,\lambda) \geq 0$, with equality if and only if $\|\proj(\X,\A,\lambda)\|_2 = 0$. But by the lemma above this happens if and only if $\X_0\cong X$, proving the first claim. The second follows as the Futaki invariant vanishes on a cscK K\"ahler manifold.\end{proof}

\begin{rem}

When $X$ is projective, irrational test configurations are closely related to $\R$-degenerations, as used in \cite{ChenSunWang,optimal}, which are simply induced from embedding $X$ into projective space and choosing a real one-parameter subgroup of the automorphism group of that projective space, possibly generated by an irrational vector field. One of the main points of \cite{optimal} is that the majority of the theory works equally well for this generalisation of a (projective) test configuration, and in particular one can define their norm in a natural way. The key point of the proof above is that the norm of an irrational test configuration still has good properties.

\end{rem}

The same proof furnishes the following.

\begin{thm} \label{Appendix geod implies equiv}
Geodesic K-polystability implies equivariant K-polystability.
\end{thm}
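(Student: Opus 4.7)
The plan is to verify, one at a time, the three conditions making up Definition \ref{Appendix def equivariant stability}: vanishing of $F(\beta_i)$ for each basis vector of $\mft$, non-negativity of $\DF_T(\X,\A,\lambda)$ for every $T$-invariant test configuration, and the rigidity statement that equality forces $\X_0\cong X$.

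The first two points are immediate. Geodesic K-polystability contains K-semistability as part of its definition (Definition \ref{Definition geodesic Kps}), so the corollary following Proposition \ref{Prop product vanish} gives $F(\nu)=0$ for every $\nu\in\mathfrak{h}$, in particular $F(\beta_i)=0$ for each $i$. Plugging this into the definition of $\DF_T$ collapses the correction term, yielding $\DF_T(\X,\A,\lambda)=\DF(\X,\A,\lambda)$ for every $T$-invariant test configuration; the right-hand side is non-negative by geodesic K-polystability.

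The substantive step is the equality case. Suppose $\DF_T(\X,\A,\lambda)=0$; then $\DF(\X,\A,\lambda)=0$, so by geodesic K-polystability the associated geodesic ray takes the form $\varphi_t=\exp(tJV).\varphi_0$ for some $V\in\mathfrak{h}$. The plan is to conclude $\X_0\cong X$ via Lemma \ref{lemma}, for which it suffices to establish $\|\proj(\X,\A,\lambda)\|_2=0$. By the Lipschitz equivalence of $\|\cdot\|_2$ with $J^{\NA}$ from \cite[Theorem 1.5]{Dervanrelative}, extended to the irrational setting exactly as in the proof of Lemma \ref{lemma}, this reduces to showing $J^{\NA}(\proj(\X,\A,\lambda))=0$. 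Since $\varphi_t$ lies entirely in a single $G$-orbit of $\varphi_0$ we have $\mathrm{J}_G(G\varphi_t)\equiv 0$; combined with the $T$-invariance of $(\X,\A,\lambda)$, the projected (possibly irrational) test configuration has a ray sitting in a $T$-orbit of $\varphi_0$, so its $J$-asymptotic slope vanishes and $J^{\NA}(\proj)=0$ as required.

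The main obstacle will be the last part of this third step: identifying the geodesic ray of the projected test configuration as a flow of the form $\exp(tJ(V-W)).\varphi_0$ for an appropriate $W\in\mft$, and then proving that this composite is a bounded ray — equivalently that $V\in\mft$. This amounts to showing that the vector field generating the ray of a $T$-invariant relatively K\"ahler test configuration lies in the Lie algebra of $T$, which I expect to follow by combining the $T$-invariance of the K\"ahler potential generating the ray (after $T$-averaging $\varphi_0$) with the maximality of $T$ and the Semmes-type correspondence between the $\C^*$-action $\lambda$ and the associated ray. Once this is in place Lemma \ref{lemma} applies directly and gives $\X_0\cong X$, completing the argument.
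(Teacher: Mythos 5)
Your skeleton follows the paper's: vanishing of the Futaki invariant (from K-semistability) kills the correction term so $\DF_T=\DF$, and in the equality case geodesic K-polystability gives the ray $\varphi_t=\exp(tJV).\varphi_0$, after which one concludes $\X_0\cong X$ through the norm/$\mathrm{J}^{\NA}$/injectivity machinery of the appendix. But the pivotal step of your third point does not work as written. The claim that the projected configuration's ray ``sits in a $T$-orbit of $\varphi_0$, so its $\mathrm{J}$-asymptotic slope vanishes'' conflates $\mathrm{J}$ with $\mathrm{J}_G$: along a ray $\exp(tJU).\varphi_0$ the functional $\mathrm{J}$ grows linearly with strictly positive slope unless $U=0$ (a nontrivial product test configuration is a counterexample to your assertion — its geodesic lies in a single $G$-orbit, indeed a $T$-orbit, yet $\mathrm{J}^{\NA}>0$). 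Only $\mathrm{J}_G$ vanishes along such a ray, and $\mathrm{J}^{\NA}$ is the slope of $\mathrm{J}$, not of $\mathrm{J}_G$. So to get $\mathrm{J}^{\NA}(\proj(\X,\A,\lambda))=0$ you would need the geodesic of the \emph{projected} configuration to be constant, i.e. $W=V$ where $W=\sum_i\frac{\langle\lambda,\beta_i\rangle}{\langle\beta_i,\beta_i\rangle}\beta_i$. This is strictly more than the condition $V\in\mft$ you flag as the ``main obstacle'': the coefficients $\langle\lambda,\beta_i\rangle$ are inner products of Hamiltonians computed on $\X_0$, and identifying them with the decomposition of $V$ essentially presupposes that $(\X,\A)$ is the product induced by $V$ — which is what you are trying to prove.

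The paper avoids this circularity by not using the orthogonal projection at all: it twists $\lambda$ by the element of $\mft$ that cancels the geodesic (so the twisted, possibly irrational, test configuration $(\X,\A,\gamma)$ has \emph{constant} geodesic), observes that its norm is then zero, and reuses the \emph{argument} of Lemma \ref{lemma} — the Lipschitz equivalence of the norm with $\mathrm{J}^{\NA}$ extended to irrational twists, boundedness of the ray, extension of the relatively K\"ahler metric to the trivial configuration, and the injectivity Lemma \ref{Lemma unique model} — to conclude $\X_0\cong X$; it does not need $\|\proj(\X,\A,\lambda)\|_2=0$ itself. If you want to salvage your route, replace the projection by this geodesic-cancelling twist; you must still justify $V\in\mft$ (via $T$-invariance of the associated geodesic for a $T$-invariant base potential, the fact that $V$ is a Killing field so the closure of its flow is a torus commuting with $T$, and maximality of $T$), a point the paper also uses implicitly, but the false $\mathrm{J}$-slope claim and the unproven identification $W=V$ cannot stand in for it.
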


\begin{proof}
Suppose $(\X,\A,\lambda)$ is a $T$-invariant test configuration with $\DF(\X,\A,\lambda)=0$. Twisting the action by an element of $\mft$ produces a new irrational test configuration  $(\X,\A,\gamma)$ with constant geodesic. It is then clear that the norm of $(\X,\A,\gamma)$ is zero as the geodesic is constant,  and we conclude by noting that the norm being zero implies $(\X,\A,\gamma)$ is trivial just as in the proof above.\end{proof}

\noindent From Theorem 1.1, this gives another proof that cscK K\"ahler manifolds are equivariantly K-polystable. 

\bigskip

\bibliography{ksemistability} 
\bibliographystyle{amsplain}

\end{document}